       \def\b{\beta}        \def\g{\gamma}  \def\ep{\epsilon}
\def\a{\alpha}
\def\ep{\varepsilon}
\newcommand{\hol}{{\mathcal Hol}}
\DeclareMathOperator{\og}{O}
\def\D{{\mathbb D}}
\def\T{{\mathbb T}}
\def\C{{\mathbb C}}  \def\N{{\mathbb N}}
 \def\R{{\mathbb R}}
\def\Dp{{\mathcal D^p_{p-1}}}
\def\Dpa{{\mathcal D^p_{\alpha}}}
\def\Dqb{{\mathcal D^q_{\beta}}}
\def\Dq{{\mathcal D^q_{q-1}}}
\def\({\left(}       \def\){\right)}
\DeclareMathOperator{\supp}{supp}
\newcommand{\iii}{\frac 1{2\pi}\int_0^{2\pi}}
\newcommand{\ig}{\stackrel{\text{def}}{=}}
\newcommand{\hti}{\widetilde{\mathcal{H}}}
\newcommand{\hg}{\mathcal{H}_{g}}
\newcommand{\hgp}{\mathcal{H}_{g'}}
\newcommand{\n}[1]{\|#1\|}
\newtheorem{theorem}{Theorem}
\newtheorem{lemma}{Lemma}
\newtheorem{corollary}{Corollary}
\theoremstyle{definition}
\theoremstyle{remark}
\newtheorem{remark}{Remark}
\numberwithin{equation}{section}
\theoremstyle{theorem}
\newtheorem{other}{\bf Theorem}              % Other papers' theorems
\newtheorem{otherl}{\bf Lemma}        % Other papers' lemmas
\newenvironment{pf}{\noindent{\emph{Proof.}}}{$\Box$ }
\newenvironment{Pf}{\noindent{\emph{Proof of}}}{$\Box$ }
 \DeclareMathOperator{\ogr}{O}
\begin{document}
%%%%%%%%%%%%%%%%%%%%%%%%%%%%%%%%%%%% Title
\title[Generalized Hilbert operators]
{Generalized Hilbert operators }

%%% Information for the first author
\author[P.~Galanopoulos]{Petros Galanopoulos}
 \address{Department of Mathematics,
Aristotle University of Thessaloniki, 54124, Thessaloniki, Greece}
%%% Second author
\author[D.~Girela]{Daniel Girela}
 \address{Departamento de An\'alisis Matem\'atico,
Universidad de M\'alaga, Campus de Teatinos, 29071 M\'alaga, Spain}
 \email{girela@uma.es}
%%%%%%%%%%%%%%%%%%%% Third author
\author[J.~A.~Pel\'aez]{Jos\'e \'Angel Pel\'aez}
 \address{Departamento de An\'alisis Matem\'atico,
Universidad de M\'alaga, Campus de Teatinos, 29071 M\'alaga, Spain}
 \email{japelaez@uma.es}
 %%%%%%%%%%%%%%%%%%%% Fourth author
 \author[A.~Siskakis]{Aristomenis G. Siskakis}
 \address{Department of Mathematics,
Aristotle University of Thessaloniki,
54124, Thessaloniki,
Greece}
 \email{siskakis@math.auth.gr}
%%% General info
\subjclass[2010]{Primary 47B35; Secondary  30H10}
\date{August 30, 2012}
\keywords{Generalized Hilbert operators, Hardy spaces, Dirichlet
spaces, Bergman spaces}
\begin{abstract}
If $g$ is an analytic function in the unit disc $\D $ we consider
the generalized Hilbert operator $\hg$
 defined by
\begin{equation*}\label{H-g}
\mathcal{H}_g(f)(z)=\int_0^1f(t)g'(tz)\,dt.
\end{equation*}
We study these operators acting on classical spaces of analytic
functions in $\D $. More precisely, we address the question of
characterizing the functions $g$ for which the operator $\hg $ is
bounded (compact) on the Hardy spaces  $H^p$, on the weighted
Bergman spaces $A^p_\alpha $ or on the spaces of Dirichlet type
$\mathcal D^p_\alpha $.
\end{abstract}
\thanks{The second and third author are supported by grants from \lq\lq El Ministerio de Econom\'{\i}a y Competitividad,
Spain\rq\rq\, MTM2011-25502;  and from \lq\lq La Junta de
Andaluc{\'\i}a\rq\rq\, (FQM210) and (P09-FQM-4468). The third author
is also supported by the Ram\'on y Cajal program, MICINN (Spain).}
\maketitle

%%%%%%%%%%%%%%%%%%%%%%%%%%%%%%%%%%%%%%%%%%%%%%%%%%%%%%%%%%%%%%%%%%%%%%%%%%%%%%%%%%%%%%%%%

\section{Introduction}\label{intro}
\subsection{Generalized Hilbert operators}
We denote by $\D $ the unit disc in the complex plane $\mathbb{C}$, and by $\hol (\D )$ the space of all analytic functions in $\D $.
\par
The Hilbert matrix
$$H=\left(%
\begin{array}{ccccc}
            1 & \frac{1}{2}  & \frac{1}{3}  & .  \\
  \frac{1}{2} & \frac{1}{3}  & \frac{1}{4}  & .  \\
  \frac{1}{3} & \frac{1}{4}  & \frac{1}{5}  & . \\
  .  & . & . & .  \\
\end{array}%
\right),
$$
can be viewed as an operator on spaces of analytic functions, called
the \,{\it Hilbert operator},\, by its action on the Taylor coefficients:
$$
a_n\mapsto \sum_{k=0}^{\infty}
\frac{a_k}{n+k+1}, \quad n=0,1,2, \cdots,
$$
that is, if
 $f(z)=\sum_{k=0}^\infty a_kz^k\in \hol (\D )$
we define
\begin{equation}\label{H}
\mathcal{H}(f)(z)=
\sum_{n=0}^{\infty}\left(\sum_{k=0}^{\infty}
\frac{a_k}{n+k+1}\right)z^n,
\end{equation}
whenever the  right hand side makes sense and defines and
analytic function in $\D $.
\par
Hardy's inequality \cite[page~48]{D} guarantees that the transformed
power series in (\ref{H}) converges on $\mathbb{D}$ and defines
there an analytic function $\mathcal{H}(f)(z)$ whenever $f\in H^1$.
In other words, $\mathcal{H}(f)$ is a well defined analytic function
for every $f\in H^1$.
\par
It turns out that
$\mathcal{H}(f)$ can be written also in the form,
\begin{equation*}\label{H-int}
\mathcal{H}(f)(z)=\sum_{n=0}^{\infty}\left(\int_0^1
t^n f(t)\,dt\right)z^n=\int_0^1f(t)\frac{1}{1-tz}\,dt,
\end{equation*}
or, equivalently,
$$
\mathcal{H}(f)(z)=\int_0^1f(t)g'(tz)\,d\zeta,
$$
where $g(z)=\log\frac{1}{1-z}$.
\par
The resulting Hilbert operator $\mathcal{H}$ is bounded from $H^p$
to $H^p$, whenever $1<p<\infty$ but
  $\mathcal{H}$ is not bounded on $H^1$
\cite[Theorem 1.1]{DiS}. In \cite{DJV}  the norm of $\mathcal{H}$ acting on Hardy spaces was  computed. Concerning the Bergman
spaces $A^p$, the operator $\mathcal{H}: A^p\to A^p$ is bounded if
and only if $2<p<\infty$, \cite{Di}. But $\mathcal{H}$ is not
 even defined in $A^2$, for  it was shown  in  \cite{DJV} that
there exist functions $f\in A^2$ such that the series defining
$\mathcal{H}(f)(0)$ is divergent.
\par\medskip
 In this article we shall be dealing with certain generalized Hilbert operators.
 Given $g\in\hol (\D )$, we consider the generalized Hilbert operator $\hg$ defined by
\begin{equation}\label{H-g}
\mathcal{H}_g(f)(z)=\int_0^1f(t)g'(tz)\,dt.
\end{equation}
As noted above, $\mathcal H=\mathcal{H}_g$ with
$g(z)=\log\frac{1}{1-z}$. We mention \cite{GaPe2010} for a different
generalization of the classical Hilbert operator.
\par
The Fej\'{e}r-Riesz inequality \cite[page 46]{D} guarantees that
given any $g\in\hol (\mathbb D)$, the integral in (\ref{H-g})
converges absolutely, and therefore the right hand side of
(\ref{H-g}) defines an analytic function on $\mathbb{D}$, for every
$f\in H^1$.
\par
We note that $\hg$ has a representation in terms of the Taylor coefficients
similar to (\ref{H}). Indeed, a simple computation shows that if
$g(z)=\sum_{n=0}^\infty b_nz^n\in \hol (\D )$ and
$f(z)=\sum_{n=0}^\infty a_nz^n\in H^1$ then
\begin{equation}
\begin{split}\label{Hgcoef}
\mathcal{H}_g(f)(z)&=\sum_{k=0}^{\infty} \left( (k+1)b_{k+1}\int_0^1t^k f(t)\,dt\right)z^k\\
&=\sum_{k=0}^\infty \left
((k+1)b_{k+1}\sum_{n=0}^\infty \frac{a_n}{n+k+1}\right
)z^k.
\end{split}
\end{equation}
\par
Our main objective in this paper is characterizing those  functions
$g$ for which  $\mathcal H_g$ is bounded on the Hardy spaces $H^p$,
the Bergman spaces $A^p_\alpha$ and on the the spaces of Dirichlet
type $\Dpa $ ($0<p<\infty , \alpha >-1$). These results are stated
in Section \ref{Main}.
%%%%%%%%%%%%%%%%%%%%%%%%%%%%%%%%%%%%%%%%%%%%%%%%%%%%%%%%%%%%%%%%%%%%%%5
\subsection{Spaces of analytic functions}
If $\,0<r<1\,$ and $\,f\in \hol (\D)$, we set
$$
M_p(r,f)=\left(\iii |f(re^{it})|^p\,dt\right)^{1/p}, \,\,\,
0<p<\infty ,
$$
$$
M_\infty(r,f)=\sup_{\vert z\vert =r}|f(z)|.
$$
\subsubsection{Hardy and Bergman spaces.} If $\,0<p\le \infty $,\, the Hardy space $H^p$ consists of those
$f\in \hol(\mathbb D)$ such that $\n{ f} _{H^p}\ig
\sup_{0<r<1}M_p(r,f)<\infty $. Functions $f$ in Hardy spaces have non-tangential boundary
values $f(e^{i\theta})$ almost everywhere on the unit circle $\T$.
\par
If $0<p<\infty $ and $\a>-1$, the weighted
Bergman space $A^p_\a$ consists of those $f\in \hol(\mathbb D)$
such that
$$
\n{ f} _{A^p_\alpha }\ig \left ((\alpha +1) \int_\D
| f(z)|^p(1-|z|^2 )^{\a}\, dA(z) \right
)^{1/p}<\infty .
$$
The unweighted Bergman space $A\sp p\sb 0 $ is simply denoted by
$A\sp p $. Here, $dA(z) =\frac{1}{\pi}dx\,dy $ denotes the
normalized Lebesgue area measure in $\mathbb D$. For each $p\in
(0,\infty )$ the Hardy space $H^p$ is contained in $A^{2p}$ and the
exponent $2p$ cannot be improved.  We refer to   \cite{D} for the
theory of Hardy spaces, and to \cite{DS}, \cite{HKZ} and \cite{Zhu}
for Bergman spaces.
\subsubsection{Dirichlet type spaces. } If $0<p<\infty $ and $\a>-1$ the
space of Dirichlet type $\Dpa$
consists of all indefinite integrals of functions in $A^p_\alpha$.  Hence, if $f$ is analytic in $\mathbb D$, then $f\in \Dpa$ if and
only if
$$
\n{f}_{\Dpa}^p\ig |f(0)|^p+\n{f'}_{A^p_\alpha }^p <\infty .
$$
The space $\mathcal D^2_0$ is the classical Dirichlet space
$\mathcal D$ and  $D^2_1=H^2$. For each $p$, the range of values of
the parameter $\a$ for which $\Dpa$ is most interesting is
$$
p-2\leq \a \leq p-1.
$$
 If $\a >p-1$ then it is easy to see that
$\Dpa =A^p_{\alpha -p}$. Indeed this follows from the well known estimate
$$
\int_{\D}|f(z)|^p(1-|z|)^s\,dA(z)\asymp
|f(0)|^p+\int_{\D}|f'(z)|^p(1-|z|)^{p+s}\,dA(z),
$$
(see, e.\,\@g., \cite[Theorem\,\@6]{Flett}). On the other hand, if
$\a <p-2$ then $\frac{\a+2-p}{p}<0$ and then it follows easily that
$\Dpa \subset H^\infty $ in this case. For $\a=p-2$ the space
$\mathcal D^p_{p-2}$ coincides with  the Besov space usually denoted
by $B^p$.
\par
For $\a=p-1$ the space $\Dp$ is the closest to the Hardy space $H^p$
but does not coincide with it for $p\ne2$. If $0<p\le 2$ then $\Dp
\subset H\sp p$ \cite{Flett} and if $2\le p<\infty $ then $H\sp
p\subset \Dp$ \cite{LP}.
%%%%%%%%%%%%%%%%%%%%%%%%%%%%%%%%%%%%%%%%%%%%%%%%%%%%%%%%%%%%%%%%%%%%%%%%%%%%%%%%%%%%%%%%%
 \subsubsection{Mean Lipschitz spaces.} We shall consider also the mean Lipschitz spaces
$\Lambda\left(p,\alpha\right)$. For $1\le p<\infty$ and $0<\a\le 1$ the
space $\Lambda\left(p,\a\right)$  consists of those $g\in \hol(\D)$ having
a non-tangential limit $g(e^{i\theta})$ almost everywhere and such
that
$$
\omega_p(g, t)=\ogr(t^{\alpha}), \quad t\to 0,
$$
where
$$
\omega_p(g, t)=\sup_{0<h\le t}\left(\int_0^{2\pi}
|g(e^{i(\theta+h)})-g(e^{i\theta})|^p \frac{d\theta}{2\pi}\right)^{1/p}
$$
is the integral modulus of continuity of order $p$.
A classical result of Hardy and Littlewood
\cite{HL-32} (see also Chapter~5 of \cite{D}) asserts that
\begin{equation}\label{hl32}
\Lambda\left(p,\a\right)=\left \{ f\in H^p : M_p(r,f^\prime)
=\og \left ((1-r)^{\alpha -1}\right )\right \},
\end{equation}
for $1\leq p<\infty
,\,\,0<\a \leq 1$. The corresponding \lq\lq little
oh\rq\rq \, spaces  are denoted by $\lambda (p, \alpha )$.
\par Among all the mean Lipschitz spaces, the spaces $\Lambda (p,\frac{1}{p})$, $1<p<\infty $, will play a fundamental role
in our work. They form a nested scale of spaces which are all
contained in the space $BMOA$ \cite{BSS}:
$$
\Lambda \left(q,\frac{1}{q}\right)\subset \Lambda  \left(p,\frac{1}{p}\right)\subset BMOA,\quad 1\le q<p<\infty .
$$
Furthermore the function $\log(\frac{1}{1-z})$ belongs to
$\Lambda\left (p,\frac{1}{p}\right)$ for each $p>1$.
%%%%%%%%%%%%%%%%%%%%%%%%%%%%%%%%%%%%%%%%%%%%%%%%%%%%%%%%%%%%%%%%%%%%%%%%%%%%%%%%%%%%%
\section{Main results}\label{Main}
 Our main results regarding Hardy spaces are contained in
Theorem~\ref{th:hardy1p2} and Theorem~\ref{th:hardy2pinfty}.
\begin{theorem}\label{th:hardy1p2} Suppose that $1<p\le 2$ and
$g\in \hol(\D)$. Then $\hg$ is bounded from $H^p$ to
$H^p$ if and only if $g\in \Lambda\left(p,\frac{1}{p}\right)$.
\end{theorem}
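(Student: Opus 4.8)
The plan is to prove the two implications separately, in each case passing through the Hardy--Littlewood form (\ref{hl32}) of the hypothesis: $g\in\Lambda\left(p,\frac1p\right)$ exactly when $M_p(r,g')=\og((1-r)^{1/p-1})$. First I would record the structural identity coming from (\ref{Hgcoef}): writing $c_k=(k+1)b_{k+1}$ for the Taylor coefficients of $g'$ and $\mu_k(f)=\int_0^1 t^kf(t)\,dt$ for the moments of $f$, one has $\hg(f)(z)=\sum_k c_k\mu_k(f)z^k$, so that $\hg(f)=g'\odot\h(f)$ is the Hadamard product of $g'$ with the image of $f$ under the classical Hilbert operator $\h$. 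This makes transparent why a crude estimate cannot work: Minkowski's inequality gives only $\|\hg f\|_{H^p}\le\int_0^1|f(t)|M_p(t,g')\,dt\le C\int_0^1|f(t)|(1-t)^{1/p-1}\,dt$, and testing with $f(z)=(1-z)^{-1/p+\varepsilon}$ shows the last integral is not controlled by $\|f\|_{H^p}$ (it fails by a logarithmic, endpoint amount). The proof must therefore exploit the cancellation hidden in the Hilbert kernel, and this is precisely where the hypothesis $p>1$ enters.

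For sufficiency I would first treat the model case $p=2$, where Parseval makes everything explicit: $\|\hg f\|_{H^2}^2=\sum_k|c_k|^2|\mu_k(f)|^2$. Writing $|\mu_k(f)|^2=\int_0^1\!\int_0^1(st)^kf(s)\overline{f(t)}\,ds\,dt$ and summing first in $k$ produces the kernel $\sum_k|c_k|^2(st)^k=M_2(\sqrt{st},g')^2$, which the hypothesis bounds by $C(1-\sqrt{st})^{-1}$; since $1-\sqrt{st}\ge\frac12[(1-s)+(1-t)]$ by AM--GM,
\[
\|\hg f\|_{H^2}^2\le C\int_0^1\!\int_0^1\frac{|f(s)||f(t)|}{(1-s)+(1-t)}\,ds\,dt .
\]
After $x=1-s,\ y=1-t$ the kernel is the half-line Hilbert kernel $(x+y)^{-1}$, bounded on $L^2$; combined with $\int_0^1|f(s)|^2\,ds\le\pi\|f\|_{H^2}^2$ (Hilbert's inequality, or Fej\'er--Riesz) this gives $\|\hg f\|_{H^2}\le C\|f\|_{H^2}$.

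For general $1<p\le2$ I would run the same scheme through duality, using $(H^p)^*=H^{p'}$ with $p'\ge2$. The pairing computes to
\[
\langle\hg f,h\rangle=\sum_{n,k}\frac{a_n\,c_k\,\overline{d_k}}{n+k+1}=\int_0^1 f(s)\,\Phi(s)\,ds,\qquad \Phi(s)=\frac1{2\pi}\int_0^{2\pi}g'(se^{i\theta})\overline{h(e^{i\theta})}\,d\theta,
\]
with $h=\sum_kd_kz^k$, and the aim is $|\langle\hg f,h\rangle|\le C\|f\|_{H^p}\|h\|_{H^{p'}}$. I would obtain this by the $L^p$ analogue of the model estimate: the Hilbert kernel $\frac1{n+k+1}$ (equivalently $(x+y)^{-1}$ on the half-line) is bounded on $\ell^p$ for all $1<p<\infty$, and $g\in\Lambda\left(p,\frac1p\right)$ is exactly what controls the $g'$-part of the kernel, with Fej\'er--Riesz and Hausdorff--Young used to pass between coefficients, radial integrals, and the $H^p,H^{p'}$ norms. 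The hard part will be the same as at $p=2$ but without the Hilbert-space identity: one must apply the Hilbert/Schur estimate to the coupled $s$-kernel rather than to the factors $f$ and $g'$ in isolation, since any splitting that bounds the $g'$ factor alone by $M_p(s,g')$ reintroduces the divergent integral of the first paragraph.

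For necessity I would test on the normalized family $f_a(z)=(1-a)^{2-1/p}(1-az)^{-2}$, $0<a<1$, for which $\|f_a\|_{H^p}\asymp1$. By (\ref{Hgcoef}) the $j$-th coefficient of $\hg(f_a)$ equals $(1-a)^{2-1/p}c_j\,w_j(a)$, where $w_j(a)=\sum_k\frac{(k+1)a^k}{k+j+1}$; an elementary estimate gives $w_j(a)\asymp(1-a)^{-1}$ for $j\lesssim(1-a)^{-1}$, so that $\hg(f_a)$ reproduces, up to the factor $(1-a)^{1-1/p}$, a truncation of $g'$ at frequency $(1-a)^{-1}$, which is comparable to the dilation $g'(a\,\cdot)$ of $H^p$-norm $M_p(a,g')$. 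Hence $\|\hg f_a\|_{H^p}\gtrsim(1-a)^{1-1/p}M_p(a,g')$, and the assumed boundedness forces $M_p(a,g')\le C(1-a)^{1/p-1}$, i.e. $g\in\Lambda\left(p,\frac1p\right)$ by (\ref{hl32}). The only delicate point here is turning the coefficientwise comparison into a genuine $H^p$ lower bound, which I would justify by comparing partial sums against the $a$-dilation of $g'$.
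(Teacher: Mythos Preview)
Your $p=2$ sufficiency argument is correct and more direct than the paper's (which routes everything through $\Dp$, dyadic decompositions, and the sublinear operator $\hti$). The real gap is the extension to $1<p<2$. You correctly identify that estimating $|\Phi(s)|\le M_p(s,g')\|h\|_{H^{p'}}$ via H\"older on the circle reproduces the divergent endpoint integral, but you do not supply a replacement mechanism; the phrase ``the $L^p$ analogue of the model estimate'' is where the argument stops. At $p=2$ the Parseval identity $\|\hg f\|_{H^2}^2=\sum_k|c_k|^2|\mu_k(f)|^2$ produced the symmetric kernel $M_2(\sqrt{st},g')^2$, to which Hilbert's double integral inequality applies. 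For $p\ne 2$ there is no such identity, and the coefficient form $\sum_{n,k}a_n c_k\bar d_k/(n+k+1)$ cannot be handled by the $\ell^p$ Hilbert matrix either: Hausdorff--Young places $(a_n)$ in $\ell^{p'}$, not $\ell^p$, and no usable coefficient bound is available for $h\in H^{p'}$ when $p'\ge 2$. Likewise, applying Fej\'er--Riesz and H\"older to $\int_0^1 f(s)\Phi(s)\,ds$ leads to $\int_0^1 M_p(s,g')^{p'}\,ds$, and since $(1/p-1)p'=-1$ this again diverges. The paper's route is entirely different: dyadic block estimates (Lemma~\ref{le:3}, Theorem~\ref{th:dec}) give $\|\hg f\|_{\Dp}\lesssim\|\hti f\|_{\Dp}$; because $\hti(f)$ has nonnegative decreasing Taylor coefficients, Theorem~\ref{th:decreasing} equates this with $\|\hti f\|_{H^p}$; Theorem~\ref{th:hti} shows $\hti$ bounded on $H^p$; and the inclusion $\Dp\subset H^p$ for $p\le 2$ closes the chain.

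For necessity you have the right test family and the right asymptotic $w_j(a)\asymp(1-a)^{-1}$ for $j\lesssim(1-a)^{-1}$, but the passage from coefficient comparison to an $H^p$ lower bound is not justified: the ratio $w_j(a)/a^j$ is not constant, so ``comparing partial sums against the $a$-dilation of $g'$'' would require the reciprocal sequence to be a uniformly bounded $H^p$ multiplier, which you have not established. The paper supplies exactly this via a smooth-multiplier construction (Lemma~\ref{le:1} and Theorem~\ref{th:cesaro}): one builds a $C^\infty$ function $\Phi_N$ supported in $(\tfrac12,4)$ whose restriction to $[1,2]$ is the exact reciprocal of $s\mapsto N^{1/p-2}\int_0^1 t^{sN}(1-a_Nt)^{-2}\,dt$, with $A_{\Phi_N}\lesssim N^{1-1/p}$; convolving $\hg(f_N)$ with the associated polynomial $W_N^{\Phi_N}$ and then applying the M.~Riesz projection extracts the dyadic block $\Delta_n g'$ \emph{exactly}, with the multiplier norm absorbed into the final constant.
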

\begin{theorem}\label{th:hardy2pinfty} Suppose that $2<p<\infty $ and
$g\in \hol(\D)$. We have:
\par (i)\, If $\hg$ is bounded from $H^p$ to $H^p$,
then $g\in \Lambda\left(p,\frac{1}{p}\right)$.
\par (ii)\, If  $g\in \Lambda\left(q,\frac{1}{q}\right)$ for some $q$ with $1<q<p$, then
$\hg$ is bounded from $H^p$ to $H^p$.
\end{theorem}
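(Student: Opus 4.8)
The plan is to handle the two implications separately: the necessity in (i) by testing $\hg$ on a concrete family of functions, and the sufficiency in (ii) by a direct norm estimate. The governing feature of the whole statement is that the \emph{necessary} condition $g\in\Lambda\left(p,\frac1p\right)$ sits exactly at the borderline of the estimates used to prove sufficiency; this is precisely why (ii) is only claimed under the strictly stronger hypothesis $g\in\Lambda\left(q,\frac1q\right)$ with $q<p$, and why (i) and (ii) do not meet. Part (i) is the routine half; part (ii) is where the work lies.

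For part (i) I would use the normalized test functions
$$f_a(z)=\frac{(1-a^2)^{c-1/p}}{(1-az)^{c}},\qquad 0<a<1,\ c>1,$$
which satisfy $\n{f_a}_{H^p}\asymp1$. A direct computation gives $\hg f_a(z)=(1-a^2)^{c-1/p}\int_0^1 (1-at)^{-c}g'(tz)\,dt$, and for $c>1$ the renormalized kernel $(1-a)^{c-1}(1-at)^{-c}$ is an approximate identity concentrating at $t=1$, so that $\hg f_a$ behaves like $(1-a)^{1/p'}g'$ on the circle $|z|=a$ (with $1/p+1/p'=1$). Keeping track of the error, one obtains $M_p(a,\hg f_a)\gtrsim(1-a)^{1/p'}M_p(a,g')$. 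Since $\n{\hg f_a}_{H^p}\le\n{\hg}\,\n{f_a}_{H^p}\lesssim1$, this forces $M_p(r,g')=\og\!\left((1-r)^{1/p-1}\right)$, which by the Hardy--Littlewood description \eqref{hl32} is exactly $g\in\Lambda\left(p,\frac1p\right)$. This argument is insensitive to whether $p\le2$ or $p>2$.

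For part (ii), with $p>2$, I would pass to an area description of the norm. Using the Littlewood--Paley inequality valid for $p\ge2$,
$$\n{\hg f}_{H^p}^p\lesssim|\hg f(0)|^p+\int_\D|(\hg f)'(z)|^p(1-|z|)^{p-1}\,dA(z),$$
together with the identity $(\hg f)'(z)=\int_0^1 t\,f(t)\,g''(tz)\,dt$, the problem reduces to bounding this weighted area integral. The crucial point is that either pulling the $t$-integral out by Minkowski's inequality or separating $f$ from $g'$ by H\"older lands exactly on the divergent endpoint when $q=p$: one is left with $\int_0^1|f(t)|(1-t)^{1/p-1}\,dt$, and $\int_0^1(1-t)^{(1/p-1)p'}\,dt$ diverges. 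The hypothesis $g\in\Lambda\left(q,\frac1q\right)$ with $q<p$ supplies the missing room: via \eqref{hl32} it gives $M_q(r,g')=\og\!\left((1-r)^{1/q-1}\right)$ at the lower exponent $q$, and one estimates the area integral by duality against $L^{p'}\!\left((1-|z|)^{p-1}dA\right)$, the gap $q<p$ producing a convergent factor $(1-t)^{\varepsilon}$ in the resulting radial integral (a Carleson-type estimate carried out at the exponent $q$ rather than $p$).

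The main obstacle is exactly this endpoint. The necessary condition $\Lambda\left(p,\frac1p\right)$ is borderline for every naive estimate, so the entire content of (ii) is to extract a genuine gain from the strict inequality $q<p$. Moreover the gain cannot be seen at the level of the pointwise means of $g'$: interpolating $M_q(r,g')$ against $M_\infty(r,g')\lesssim(1-r)^{-1}$ only reproduces the borderline exponent $1/p-1$. It must therefore be obtained by retaining the oscillation in the angular variable and arguing at the exponent $q$ through duality. This is also the structural reason a shortcut through Theorem~\ref{th:hardy1p2} via the adjoint is unavailable: the matrix of $\hg^{*}$ carries its weight on the input index and so is not of the form $\mathcal H_G$, leaving the direct estimate as the route one must push through.
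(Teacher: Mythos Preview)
Your proposal has genuine gaps in both parts.

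\textbf{Part (i).} The approximate-identity heuristic is right in spirit, but the step ``keeping track of the error, one obtains $M_p(a,\hg f_a)\gtrsim(1-a)^{1/p'}M_p(a,g')$'' is not justified. Splitting off $g'(z)$ from $\hg f_a(z)=(1-a^2)^{c-1/p}\int_0^1(1-at)^{-c}g'(tz)\,dt$ leaves a remainder whose $M_p$-mean involves $M_p\bigl(a,\,g'(t\,\cdot)-g'\bigr)$, i.e.\ the very modulus of continuity you are trying to bound. With no a priori regularity on $g$, the error can be as large as the main term and no lower bound follows. The paper sidesteps this circularity: rather than compare $\hg f_N$ with $g'$, it hits $\hg f_N$ with a smooth Fourier multiplier $W_N^{\Phi_N}$ (Theorem~\ref{th:cesaro}), choosing $\Phi_N$ via Lemma~\ref{le:1} so that on the block $N\le k\le 2N-1$ it \emph{exactly} inverts the moment factor $\int_0^1 t^k f_N(t)\,dt$. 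The M.~Riesz projection then extracts $\Delta_n g'$ as an exact piece of $W_{2^n}^{\Phi_{2^n}}\ast\hg f_{2^n}$, with no error term to control.

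\textbf{Part (ii).} Starting from the Littlewood--Paley inequality with weight $(1-|z|)^{p-1}$ produces $M_p$-means of $(\hg f)'$, and as you yourself observe, interpolating the $\Lambda(q,1/q)$ hypothesis up to exponent $p$ only returns the divergent endpoint. Your proposed remedy, ``duality against $L^{p'}((1-|z|)^{p-1}\,dA)$, arguing at the exponent $q$'', is not a mechanism: pairing with an arbitrary $L^{p'}$ function does not convert $M_p$-control into $M_q$-control of $g''$. The paper's decisive move is to use a \emph{different} equivalent norm on $H^p$ built from $M_q$-means (\cite[Corollary~3.1]{MP}),
\[
\n{F}_{H^p}^p\lesssim|F(0)|^p+\int_0^1(1-r)^{p(1-1/q)}M_q^p(r,F')\,dr,
\]
and then pass to the dyadic form $\sum_n 2^{-n(p-p/q+1)}\n{\Delta_n(\hg f)'}_{H^q}^p$ via Theorem~\ref{th:dec}. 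Lemma~\ref{le:3} bounds each block by a moment of $|f|$ times $\n{\Delta_n g''}_{H^q}$, so the hypothesis $g\in\Lambda(q,1/q)$ enters directly; the resulting sum of moments is identified with $\n{\hti(f)}_{H^p}^p$ and controlled by Theorem~\ref{th:hti}. Choosing the $M_q$-based norm at the outset is precisely what unlocks the $\Lambda(q,1/q)$ information; your $M_p$-based Littlewood--Paley starting point leaves you stranded at the borderline you correctly identified.
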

\par\medskip
It is natural to ask whether or not the condition
 $g\in \Lambda\left(p,\frac{1}{p}\right)$
implies that $\hg$ is bounded from $H^p$ to $H^p$, for $2<p<\infty
$. We do not know the answer to this question but we conjecture that
it is affirmative. The condition $g\in
\Lambda\left(q,\frac{1}{q}\right)$ for some $q$ with $1<q<p$ which
appears in (ii) is slightly stronger than that of $g$ belonging to
$\Lambda \left (p,\frac{1}{p}\right )$.
\par
Using (\ref{hl32}) it follows that if $g\in \hol (\D )$
has power series  $g(z)=\sum_{k=0}^\infty b_kz^k$ with
$\sup_{k\in\N}k|b_{k}|<\infty $, then $g\in \Lambda \left
(2,\frac{1}{2}\right )$. Also, using (\ref{hl32}) and the Littlewood
subordination principle, it follows easily that a function $g\in
\hol (\D )$ such that $\Re (g'(z))\geq 0$, for all $z\in \D $,
belongs to $\Lambda \left (q,\frac{1}{q}\right )$ for all $q>1$, a
result which readily implies that the same is true for any $g\in
\hol (\D )$ which is the Cauchy transform of a finite, complex,
Borel measure $\mu $ on the circle $\mathbb T$, that is,
$$
g(z)=\int_{\mathbb T}\frac{d\mu (\zeta )}{1-\bar\zeta z}.
$$
Consequently, it is clear that we have  the following.
\begin{corollary}\label{co:substitute}
Let $\mathcal{K}$ be the class of those analytic functions in $\D $
which are the Cauchy transform of a finite, complex, Borel measure
on $\mathbb T$ and let
$$
\mathcal{C}=\left\{g(z)=\sum_{k=0}^\infty
b_kz^k\in\hol(\D):\,\sup_{k\in\N}k|b_{k}|<\infty\right\}.
$$ We have:
\par (i)\, If $2\leq p<\infty$ and  $g\in\mathcal{C}$, then $\hg:\,H^p\to H^p$ is
bounded.
\par (ii)\, If $1<p<\infty $ and $g\in \mathcal K$, then
$\hg:\,H^p\to H^p$ is bounded.
\end{corollary}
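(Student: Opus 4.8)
The plan is to derive both statements directly from the characterizations in Theorem~\ref{th:hardy1p2} and Theorem~\ref{th:hardy2pinfty}, once the two membership facts recorded just above the statement are in hand: that every $g\in\mathcal{C}$ lies in $\Lambda\left(2,\frac12\right)$, and that every $g\in\mathcal{K}$ lies in $\Lambda\left(q,\frac1q\right)$ for every $q>1$. Granting these, the corollary reduces to choosing, for each admissible $p$, an exponent $q$ to which one of the two theorems applies.

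First I would prove (i). Fix $g\in\mathcal{C}$, so that $g\in\Lambda\left(2,\frac12\right)$. If $p=2$, then $\Lambda\left(2,\frac12\right)$ is precisely the class for which Theorem~\ref{th:hardy1p2} (taken at the endpoint $p=2$) guarantees boundedness of $\hg$ on $H^2$. If $2<p<\infty$, I would invoke Theorem~\ref{th:hardy2pinfty}(ii) with the choice $q=2$: since $1<2<p$ and $g\in\Lambda\left(2,\frac12\right)$, the operator $\hg$ is bounded on $H^p$. Together these two cases cover all $2\le p<\infty$, which is (i).

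For (ii), fix $g\in\mathcal{K}$, so that $g\in\Lambda\left(q,\frac1q\right)$ for every $q>1$. When $1<p\le2$ I would take $q=p$; then $g\in\Lambda\left(p,\frac1p\right)$, and Theorem~\ref{th:hardy1p2} yields boundedness of $\hg$ on $H^p$. When $2<p<\infty$ I would again take $q=2$ and apply Theorem~\ref{th:hardy2pinfty}(ii) exactly as in part (i). This exhausts the whole range $1<p<\infty$.

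The combination above is entirely routine; the only substance lies in the two membership facts, which are established in the paragraph preceding the statement. For $\mathcal{C}$, the bound $|b_k|\le C/k$ gives $M_2(r,g')^2=\sum_k k^2|b_k|^2 r^{2k-2}=\og\left((1-r)^{-1}\right)$, so that $g\in\Lambda\left(2,\frac12\right)$ by \eqref{hl32}. The harder point, and the one I would regard as the genuine obstacle, is the $\mathcal{K}$ fact: it rests on the Littlewood subordination principle applied to functions $g$ with $\Re g'\ge0$, reducing the growth of $M_q(r,g')$ to that of the extremal function $\frac{1}{1-z}$, together with the passage from such functions to arbitrary Cauchy transforms. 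Since both facts are taken as given here, the corollary follows at once.
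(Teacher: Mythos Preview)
Your proposal is correct and follows essentially the same route as the paper: the corollary is stated right after the paper records that $\mathcal{C}\subset\Lambda(2,\tfrac12)$ and $\mathcal{K}\subset\bigcap_{q>1}\Lambda(q,\tfrac1q)$, and the intended argument is precisely to feed these memberships into Theorem~\ref{th:hardy1p2} (for $1<p\le 2$) and Theorem~\ref{th:hardy2pinfty}(ii) with $q=2$ (for $p>2$), exactly as you do.
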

\par
We note that $\mathcal{K}$ and $\mathcal{C}$ are  subclasses of the
mentioned mean Lipschitz spaces containing the function
$g(z)=\log\frac{1}{1-z}$. Thus, Corollary~\ref{co:substitute}
generalizes the classical result on the boundedness of the Hilbert
operator on $H^p$.
\par\medskip
It turns out that $g\in \Lambda \left (p,\frac{1}{p}\right )$ is equivalent to the
boundedness of the operator $\hg $ on the weighted Bergman spaces
$A^p_\alpha $ and on the spaces of Dirichlet type $\mathcal
D^p_\alpha $ for the admissible values of $p$ and $\alpha $.
\begin{theorem}\label{th:bergman}
Suppose that $1<p<\infty$, $-1<\alpha<p-2$ and  $g\in \hol(\D)$. Then
 $\hg:A^p_\alpha\to A^p_\alpha$ is bounded  if
 and only if $g\in \Lambda\left(p,\frac{1}{p}\right)$.
\end{theorem}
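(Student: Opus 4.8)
The plan is to establish the two implications separately, using throughout that $\alpha<p-2$ is equivalent to $(\alpha+2)/p<1$. This keeps the point evaluations on $A^p_\alpha$ of order $(1-|z|)^{-(\alpha+2)/p}$ with exponent below $1$ and, more importantly, places every radial integral one meets in its convergent (decaying) regime, namely $\int_0^1(1-r)^{a}(1-tr)^{-b}\,dr\asymp(1-t)^{a+1-b}$ as $t\to1$ whenever $b-a>1$, the condition $b-a>1$ being exactly $\alpha<p-2$.

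For the necessity I would assume $\hg\colon A^p_\alpha\to A^p_\alpha$ bounded and aim at the Hardy--Littlewood description (\ref{hl32}) of $\Lambda(p,\frac1p)$, i.e.\ $M_p(r,g')=\ogr\bigl((1-r)^{1/p-1}\bigr)$. For $a\in(0,1)$ I would test on the normalized family $f_a(z)=(1-a)^{\tau-(\alpha+2)/p}(1-az)^{-\tau}$ with $\tau$ large, for which $\n{f_a}_{A^p_\alpha}\asymp1$. By (\ref{H-g}) the positive weight $(1-at)^{-\tau}$ concentrates near $t=1$ on a segment of length $\asymp1-a$, so that on the circle $|z|=a$ the function $\hg(f_a)$ behaves like a constant multiple of $(1-a)^{1-(\alpha+2)/p}g'$; pairing $\hg(f_a)$ by duality with a second function adapted to the same boundary point then reads off a weighted mean of $g'$ at scale $1-a$. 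Combining this with $\n{\hg f_a}_{A^p_\alpha}\lesssim1$ and bookkeeping the exponents yields $M_p(a,g')\lesssim(1-a)^{1/p-1}$. The point requiring care is that $g'$ is complex and the averaging segment has length comparable to the scale on which $g'$ oscillates, so the pairing must be arranged so that no cancellation destroys the lower bound.

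For the sufficiency, assume $g\in\Lambda(p,\frac1p)$. Reproducing $f$ with the $A^2_\alpha$-kernel and interchanging integrations in (\ref{H-g}) rewrites $\hg$ as a genuine integral operator on the disc,
\[
\hg f(z)=(\alpha+1)\int_\D f(w)\,N(z,w)\,(1-|w|^2)^\alpha\,dA(w),\qquad N(z,w)=\int_0^1\frac{g'(tz)}{(1-tw)^{\alpha+2}}\,dt .
\]
I would then try to prove $\n{\hg f}_{A^p_\alpha}\lesssim\n{f}_{A^p_\alpha}$ by a Schur-type test for $L^p\bigl((1-|w|^2)^\alpha\,dA\bigr)$ with weight $(1-|w|^2)^{-\beta}$, estimating the two Schur integrals with the mean growth $M_p(r,g')=\ogr((1-r)^{1/p-1})$ and the radial asymptotic recorded above, which is exactly where $\alpha<p-2$ enters.

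The sufficiency estimate is the main obstacle, and the difficulty is genuine rather than technical. One may \emph{not} bound $N$ by replacing $|g'(tz)|$ with its pointwise maximum: that uses only $|g'(w)|\lesssim(1-|w|)^{-1}$, i.e.\ that $g$ is Bloch, and would prove boundedness under the strictly weaker Bloch hypothesis, which is false since the characterizing class is exactly $\Lambda(p,\frac1p)$. Equivalently, majorizing $|f(t)|$ inside (\ref{H-g}) and applying Minkowski's inequality is too lossy, because it discards the cancellation that makes $\hg$ bounded. Thus the kernel $N(z,w)$ must be estimated by exploiting the integral (mean) smoothness of $g'$, that is the cancellation in the $t$-integral, sharply enough for the Schur test to close while still seeing only $\Lambda(p,\frac1p)$. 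This is the mechanism that delivers the sharp hypothesis $g\in\Lambda(p,\frac1p)$ for every $p$, a sufficiency which for the Hardy spaces $H^p$ with $p>2$ remains only conjectural in Theorem~\ref{th:hardy2pinfty}; the extra room provided by $\alpha<p-2$ is what makes it attainable here.
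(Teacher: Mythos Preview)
Your proposal correctly isolates the two real obstacles---cancellation in the necessity argument and the need to use mean (not pointwise) smoothness of $g'$ in the sufficiency---but neither is actually overcome in the sketch, and the methods you suggest do not obviously close the gaps.

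For necessity, ``$\hg(f_a)$ behaves like $(1-a)^{1-(\alpha+2)/p}g'$ on $|z|=a$'' is the heuristic one wants, but turning it into a lower bound requires extracting $g'$ from $\int_0^1 f_a(t)g'(tz)\,dt$ without loss from oscillation, and your phrase ``pairing \dots\ reads off a weighted mean of $g'$'' does not supply such a mechanism. The paper does not use a duality pairing here; it works on Taylor coefficients. Writing $\hg(f_N)(z)=\sum_k (k+1)b_{k+1}\bigl(\int_0^1 t^k f_N(t)\,dt\bigr)z^k$ with $f_N(z)=N^{-(3-(2+\alpha)/p)}(1-a_Nz)^{-2}$, one observes that on the dyadic block $N\le k<2N$ the factor $\int_0^1 t^k f_N(t)\,dt$ is a known smooth function of $k/N$ (Lemma~\ref{le:1}). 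Convolving with a smooth compactly supported Fourier multiplier $W_N^{\Phi_N}$ built to invert that factor recovers exactly $\sum_{N\le k<2N}(k+1)b_{k+1}z^k$, and the multiplier bound of Theorem~\ref{th:cesaro} plus Lemma~\ref{le:eqno} then give $\|\Delta_n g'\|_{H^p}\lesssim 2^{n(1-1/p)}$, i.e.\ $g\in\Lambda(p,\tfrac1p)$ via Theorem~\ref{th:mlip}. (Formally the paper passes through $A^p_\alpha=\mathcal D^p_{p+\alpha}$ and extracts $\Delta_n g''$, but the mechanism is the same.) This coefficient-side inversion sidesteps the cancellation problem you flag; your duality scheme may also be salvageable, but the missing lower bound is exactly the hard step.

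For sufficiency, the Schur route is likely a dead end for the reason you yourself state: Schur tests want pointwise control of the kernel $N(z,w)$, and any pointwise bound on $g'$ sees only the Bloch norm. The paper avoids kernels altogether. It uses the dyadic decomposition $\|\hg f\|_{A^p_\alpha}^p\asymp\sum_n 2^{-n(\alpha+1)}\|\Delta_n\hg f\|_{H^p}^p$ (Theorem~\ref{th:dec}) and a second smooth-multiplier lemma (Lemma~\ref{le:3}) giving
\[
\|\Delta_n\hg f\|_{H^p}\lesssim\Bigl(\int_0^1 t^{2^{n-2}}|f(t)|\,dt\Bigr)\,\|\Delta_n g'\|_{H^p}.
\]
The hypothesis $g\in\Lambda(p,\tfrac1p)$ enters precisely as $\|\Delta_n g'\|_{H^p}\lesssim 2^{n(1-1/p)}$, and after inserting this the remaining sum is recognized (Lemma~\ref{le:htidec}) as $\|\hti(f)\|_{A^p_\alpha}^p$ for the \emph{sublinear} Hilbert operator $\hti(f)(z)=\int_0^1\frac{|f(t)|}{1-tz}\,dt$, whose boundedness on $A^p_\alpha$ for $-1<\alpha<p-2$ is proved independently by duality (Theorem~\ref{th:hti}). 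So the mechanism that captures exactly $\Lambda(p,\tfrac1p)$ is the block characterization $\|\Delta_n g'\|_{H^p}=\og\bigl(2^{n(1-1/p)}\bigr)$, not a kernel estimate; the ``extra room from $\alpha<p-2$'' shows up not in a Schur test but in the boundedness of $\hti$.
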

\par
The condition $-1<\alpha<p-2$ is not a real restriction. It is needed to insure that any function
$f\in A^p_\alpha$ satisfies that $\int_0^1|f(t)|\,dt<\infty$, which
is necessary for the operator $\hg$ being well defined on
$A^p_\alpha $. The result does not remain true for $\alpha\ge p-2$.
\medskip
\begin{theorem}\label{th:dirichlet}
Suppose that $1<p<\infty$, $p-2<\alpha\le p-1$ and  $g\in \hol(\D)$. Then
 $\hg:\Dpa\to \Dpa$ is bounded  if
 and only if $g\in \Lambda\left(p,\frac{1}{p}\right)$.
\end{theorem}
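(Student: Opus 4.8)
The plan is to reduce boundedness of $\hg$ on $\Dpa$ to a weighted Bergman estimate by differentiating. Since $f\in\Dpa$ if and only if $f'\in A^p_\alpha$, with $\n{f}_{\Dpa}^p=|f(0)|^p+\n{f'}_{A^p_\alpha}^p$ by definition, and since $\hg f(0)=g'(0)\int_0^1 f(t)\,dt$ is easily seen to be a bounded functional on $\Dpa$ in this range (functions in $\Dpa$ are integrable on $[0,1]$ when $\alpha\le p-1$), the operator $\hg$ is bounded on $\Dpa$ exactly when
\[
\n{(\hg f)'}_{A^p_\alpha}\lesssim\n{f}_{\Dpa},\qquad f\in\Dpa .
\]
Differentiating the defining integral gives the clean identity $(\hg f)'(z)=\int_0^1 t f(t)\,g''(tz)\,dt=\hgp(\zeta f(\zeta))(z)$: the derivative of $\hg f$ is the generalized Hilbert operator with the shifted symbol $g'$ applied to $\zeta f(\zeta)$. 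I would also record the companion formula obtained by writing $f(t)=f(0)+\int_0^t f'(s)\,ds$ and applying Fubini, which expresses $(\hg f)'$ as an integral of $f'$ against a kernel built from $g'$ and $g$ only; this is convenient since the hypothesis $g\in\Lambda(p,\frac1p)$ is information about $g'$. Because the symbol is shifted from $g$ to $g'$, Theorem~\ref{th:bergman} cannot be quoted directly and the argument must be carried out in parallel. As a motivation and consistency check, note that for $\alpha>p-1$ one has $\Dpa=A^p_{\alpha-p}$ with $\alpha-p\in(-1,p-2)$, so that range already follows from Theorem~\ref{th:bergman}; the genuinely new range is precisely $p-2<\alpha\le p-1$.

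For the necessity, assume $\hg:\Dpa\to\Dpa$ is bounded and test on a normalized family $f_a$, $a\in\D$, chosen so that the derivatives $f_a'$ are the standard weighted Bergman kernels and $\n{f_a}_{\Dpa}\asymp1$ uniformly in $a$. Feeding $f_a$ into the identity for $(\hg f)'$ and using $\sup_a\n{\hg f_a}_{\Dpa}<\infty$, I would extract the growth condition on $g'$ that is equivalent, via the Hardy--Littlewood description \eqref{hl32}, to $M_p(r,g')=\og\big((1-r)^{1/p-1}\big)$, that is, to $g\in\Lambda(p,\frac1p)$. The kernel computation near the boundary is where the exponents $(\alpha+2)/p$ enter, and the constraints $p-2<\alpha\le p-1$ are used to make the test functions admissible and the extracted condition sharp.

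For the sufficiency, assume $g\in\Lambda(p,\frac1p)$, equivalently $M_p(r,g')=\og\big((1-r)^{1/p-1}\big)$ and hence $M_p(r,g'')=\og\big((1-r)^{1/p-2}\big)$, and estimate $\n{(\hg f)'}_{A^p_\alpha}$. Here lies the main obstacle: this estimate sits at a critical exponent. Bounding $(\hg f)'(z)=\int_0^1 tf(t)g''(tz)\,dt$ by Minkowski's integral inequality together with the pointwise growth $|f(t)|\lesssim\n{f}_{\Dpa}(1-t)^{-(\alpha+2-p)/p}$ and the scaling bound $\n{g''(t\,\cdot)}_{A^p_\alpha}\lesssim(1-t)^{(\alpha+2)/p-2}$ produces exactly $\int_0^1(1-t)^{-1}\,dt$, a logarithmic divergence; the same borderline failure occurs if one first trades $f$ for $f'$. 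Thus a sharp argument is required. I would pass to duality, using $(A^p_\alpha)^*\cong A^{p'}_{\alpha}$ under the natural weighted pairing: after Fubini, $\langle(\hg f)',h\rangle=\int_0^1 tf(t)\,\langle g''(t\,\cdot),h\rangle\,dt$, and I would apply H\"older in the variable $t$ so that the factor carrying $f$ is controlled by $\n{f}_{\Dpa}$ (a one--dimensional Carleson-type embedding of $\Dpa$ into a weighted $L^p(0,1)$) while the factor $\int_0^1|\langle g''(t\,\cdot),h\rangle|^{p'}\,w(t)\,dt$ is controlled by $\n{h}_{A^{p'}_\alpha}$ precisely because $g\in\Lambda(p,\frac1p)$. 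The crux is that the mean Lipschitz condition is exactly strong enough to make this last integral converge at the critical exponent, recovering the logarithm that the crude estimate loses; verifying this convergence, with the correct matching of the H\"older weights, is the technical heart of the proof.
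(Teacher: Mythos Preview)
Your necessity sketch is in the right spirit (test functions plus the identity $(\hg f)'=\hgp(zf)$), though the paper does more than ``extract the growth of $g'$'': it uses smooth multiplier technology (Lemma~\ref{le:1} and Theorem~\ref{th:cesaro}) to isolate the dyadic block $\Delta_n g''$ from $\hgp(zf_N)$ and then invokes the block characterization of $\Lambda(p,\tfrac1p)$ in Theorem~\ref{th:mlip}. Your description does not explain how one would get from $\sup_N\|\hg f_N\|_{\Dpa}<\infty$ to a bound on $M_p(r,g')$ directly, and in fact this extraction is the nontrivial part.

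The real gap, however, is in the sufficiency. You correctly diagnose the obstacle: Minkowski together with the sharp bounds $|f(t)|\lesssim(1-t)^{-(\alpha+2-p)/p}\|f\|_{\Dpa}$ and $\|g''(t\cdot)\|_{A^p_\alpha}\lesssim(1-t)^{(\alpha+2)/p-2}$ yields exactly $\int_0^1(1-t)^{-1}\,dt$. But your proposed cure---duality and H\"older in $t$---does not by itself avoid this divergence. After Fubini one has $\langle(\hg f)',h\rangle=\int_0^1 tf(t)\,\langle g''(t\cdot),h\rangle\,dt$; if you now estimate $|\langle g''(t\cdot),h\rangle|$ at each fixed $t$ using only the hypothesis on $g$, the best available bound is again $\|g''(t\cdot)\|_{A^p_\alpha}\|h\|_{A^{p'}_\beta}\lesssim(1-t)^{(\alpha+2)/p-2}\|h\|$, and pairing this via H\"older with the optimal weight for $f$ (namely $(1-t)^{\alpha-p+1}$, as in Lemma~\ref{le:hti2}) reproduces $\int_0^1(1-t)^{-1}\,dt$. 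So the claim that ``the mean Lipschitz condition is exactly strong enough to make this last integral converge'' is not substantiated, and as stated it is false: something beyond a pointwise-in-$t$ use of $g\in\Lambda(p,\tfrac1p)$ is required.

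The paper's way out is different and is the key idea you are missing. One decomposes $\|(\hg f)'\|_{A^p_\alpha}^p$ into dyadic blocks via Theorem~\ref{th:dec}, and then uses the multiplier bound of Lemma~\ref{le:3} to separate $g$ from $f$ \emph{at the block level}:
\[
\|\Delta_n\hgp(zf)\|_{H^p}\lesssim\Bigl(\int_0^1 t^{2^{n-2}+1}|f(t)|\,dt\Bigr)\,\|\Delta_n g''\|_{H^p}.
\]
The Lipschitz hypothesis is used discretely, as $\|\Delta_n g''\|_{H^p}\lesssim 2^{n(2-1/p)}$. What remains is a sum depending only on $|f|$, which by Lemma~\ref{le:htidec} is comparable to $\|\hti(f)\|_{\Dpa}^p$, where $\hti$ is the sublinear Hilbert operator $\hti(f)(z)=\int_0^1\frac{|f(t)|}{1-tz}\,dt$. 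The boundedness of $\hti$ on $\Dpa$ (Theorem~\ref{th:hti}) is an independent result, and \emph{that} is proved by duality---but a duality in which the inner pairing collapses via a reproducing-kernel identity to $\int_0^1 t|f(t)|\overline{h'(t)}\,dt$, with no residual dependence on $g$. In short: duality works once $g$ has been removed, not before.
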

\par\medskip
The paper is organized as follows. In Section~\ref{preliminary} we
state and prove a number of lemmas which will be used specially in
Section~\ref{necessity} where we shall prove the necessity parts of
our just mentioned results. Section~\ref{sublinear} will be devoted
to study the {\em{the sublinear Hilbert operator}} $\hti $ defined
by
 $$
 \hti(f)(z)=\int_0^1\frac{|f(t)|}{1-tz}\,dt.
 $$
We shall prove that if $g\in \Lambda\left(p,\frac{1}{p}\right)$ and
$X$ is either $H^p$\,with\, $1<p\le 2$, or
$A^p_\alpha$\,with\,$1<p<\infty $ and $-1<\alpha<p-2$, or $\Dpa$
\,with\,$1<p<\infty $ and $p-2<\alpha\le p-1$, then
$$
||\hg(f)||_{X}\le C||\hti(f)||_{X},\quad f\in X.
$$
The sufficiency parts of our Theorems~1-4 will follow using this and
the following result which has independent interest.
 \begin{theorem}\label{th:hti}
 \par (i)\, If $p>1$, then\, $\hti: H^p\to H^p$ is bounded.
 \par (ii)\, If $p>1$ and $-1<\alpha<p-2$, then \, $\hti: A^p_\alpha\to A^p_\alpha$ is bounded.
 \par (iii)\, If $p>1$ and $p-2<\alpha\le p-1$, then \, $\hti: \Dpa\to \Dpa$ is bounded.
 \end{theorem}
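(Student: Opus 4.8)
The plan is to treat the three parts in a unified way, reducing each to a single weighted integral estimate on the sublinear kernel $\frac{1}{1-tz}$. The key observation is that $\hti(f)(z)=\int_0^1\frac{|f(t)|}{1-tz}\,dt$ is, for each fixed $z\in\D$, an integral against a positive kernel, so there is no cancellation to exploit and the problem is genuinely one about positive operators. Since $|f(t)|$ for $t\in(0,1)$ depends only on the restriction of $f$ to the radius, and since $\hti(f)$ is manifestly an analytic function of $z$ whose Taylor coefficients are the positive numbers $\int_0^1 t^k|f(t)|\,dt$, the natural strategy is to compare $\hti$ with the ordinary Hilbert operator $\h$ and then invoke the known mapping properties of $\h$. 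More precisely, if $F(z)=\sum_k\big(\int_0^1 t^k|f(t)|\,dt\big)z^k$, then $F=\hti(f)$, and I would like to dominate the relevant norm of $F$ by the corresponding norm of $f$.

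The central step for part~(i) is the following: I would first record that $\int_0^1|f(t)|\,dt\le C\|f\|_{H^p}$ (this is the Fej\'er--Riesz inequality, already invoked in the introduction) so that the integral defining $\hti(f)$ converges and $\hti(f)$ is a well-defined analytic function. The actual boundedness I expect to obtain by duality: for $1<p<\infty$ with conjugate exponent $p'$, test $\hti(f)$ against an arbitrary $h\in H^{p'}$ and use Fubini together with the elementary identity
\begin{equation*}
\frac{1}{2\pi}\int_0^{2\pi}\frac{h(e^{i\theta})}{1-te^{-i\theta}z}\,d\theta
\end{equation*}
to rewrite the pairing as an integral of $|f(t)|$ against a quantity controlled by the maximal function or the Hilbert transform of $h$ evaluated along the radius. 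The positivity of the kernel is what makes this tractable, because it lets me pass absolute values inside and apply the scalar Hardy-type inequality $\int_0^1|f(t)|\big(\int_0^1\frac{|h(s)|}{1-ts}\,ds\big)\,dt$, which is a two-sided Hilbert-matrix bilinear form known to be bounded on $\ell^p\times\ell^{p'}$ precisely when $1<p<\infty$. This is the heart of why the restriction $p>1$ appears and why $p=1$ must fail.

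For parts~(ii) and~(iii) the strategy is the same reduction, but now carried out on the weighted area integrals. For the Bergman case I would estimate $\|\hti(f)\|_{A^p_\alpha}^p=(\alpha+1)\int_\D|F(z)|^p(1-|z|^2)^\alpha\,dA(z)$ by bringing the $dt$-integral outside via Minkowski's integral inequality, leaving $\int_0^1|f(t)|\,\big\|\frac{1}{1-tz}\big\|_{A^p_\alpha}\,dt$; the norm of the kernel grows like $(1-t)^{-(\alpha+2)/p+1}$, and the constraint $\alpha<p-2$ is exactly what forces this exponent to keep $\int_0^1|f(t)|(1-t)^{-(\alpha+2)/p+1}\,dt$ controllable by $\|f\|_{A^p_\alpha}$ through a one-dimensional weighted Hardy inequality. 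Part~(iii) then follows by differentiating: since $\frac{d}{dz}\hti(f)(z)=\int_0^1\frac{t|f(t)|}{(1-tz)^2}\,dt$, the $\Dpa$ norm of $\hti(f)$ reduces to an $A^p_\alpha$ estimate on this derivative, and the same Minkowski-plus-Hardy argument applies with the squared kernel, the range $p-2<\alpha\le p-1$ being the one that makes the resulting kernel-norm integrable against $A^p_\alpha$ data. The main obstacle I anticipate is making the duality pairing in part~(i) rigorous at the endpoint behavior and verifying the boundedness of the underlying bilinear Hilbert-type form without losing the sharp range of $p$; once that scalar inequality is in hand, the Bergman and Dirichlet cases are routine applications of Minkowski's integral inequality combined with explicit computation of $\|(1-tz)^{-1}\|_{A^p_\alpha}$ and $\|(1-tz)^{-2}\|_{A^p_\alpha}$.
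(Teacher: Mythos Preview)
Your duality approach for part~(i) is essentially what the paper does, and in fact the pairing simplifies more than you anticipate: by Cauchy's formula
\[
\frac{1}{2\pi}\int_0^{2\pi}\frac{\overline{h(re^{i\theta})}}{1-tre^{i\theta}}\,d\theta=\overline{h(r^2t)},
\]
so $\langle \hti(f),h\rangle_{H^2}=\int_0^1|f(t)|\overline{h(t)}\,dt$ in the limit, and a single application of H\"older together with Fej\'er--Riesz on each factor finishes. No bilinear Hilbert-matrix form is needed.

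The Minkowski strategy you propose for parts~(ii) and~(iii), however, does not work: it loses exactly a logarithm. Concretely, for~(ii) one has $\|(1-t\,\cdot)^{-1}\|_{A^p_\alpha}\asymp (1-t)^{(\alpha+2-p)/p}$, so Minkowski's integral inequality yields
\[
\|\hti(f)\|_{A^p_\alpha}\le C\int_0^1|f(t)|(1-t)^{-(p-\alpha-2)/p}\,dt,
\]
and this right-hand side is \emph{not} dominated by $\|f\|_{A^p_\alpha}$. To see this, test on $f_\beta(z)=(1-z)^{-\beta}$ with $\beta\uparrow(\alpha+2)/p$: one computes $\|f_\beta\|_{A^p_\alpha}\asymp\big((\alpha+2)/p-\beta\big)^{-1/p}$ while the integral above is $\asymp\big((\alpha+2)/p-\beta\big)^{-1}$, so the ratio blows up. The same endpoint failure occurs in your scheme for~(iii) with the squared kernel. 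No ``one-dimensional weighted Hardy inequality'' can repair this, because there is no averaging operator acting on $|f|$ to which such an inequality would apply; you are simply trying to bound a weighted $L^1$-norm of $|f|$ on the radius by $\|f\|_{A^p_\alpha}$, and that embedding is false at the critical exponent.

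The paper instead handles~(ii) and~(iii) by duality as well. The point is that after pairing with $h$ in the dual space (choosing the dual weight so that the pairing has no weight), Fubini and the Cauchy/Bergman reproducing formula collapse the inner integral, leaving an expression like $\int_0^1|f(t)|G(t)\,dt$ where $G$ is an \emph{average} of $h$ (or simply $h'(t)$ in the $\Dpa$ case). One then splits via H\"older with the weight $(1-t)^{(\alpha+1)/p}$; the $f$-factor is controlled by Lemma~\ref{le:hti1} or Lemma~\ref{le:hti2}, and the crucial gain is on the $h$-side, where $G$ genuinely is a Hardy-type average and the classical Hardy inequality~(\ref{eq:hardy}) applies to recover the dual norm. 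In short, the Hardy inequality belongs on the dual function, not on $f$, and that is exactly what Minkowski forfeits.
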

 \par\medskip
 In Section~\ref{compactness} we shall deal with the question
 of characterizing the functions $g$ for which $\hg$ is compact on
 Hardy, Bergman and Dirichlet spaces. We prove the \lq\lq expected
 results\rq\rq , that is, Theorems~1-4 remain true if we change \lq\lq
 bounded\rq\rq \, to \lq\lq compact\rq\rq \, and the mean Lipschitz
 space $\Lambda (s,\alpha )$ appearing there to the corresponding \lq\lq
 little oh\rq\rq\, space $\lambda (s,\alpha )$.
We also obtain the characterization of the functions $g$ for which
the operator $\mathcal{H}_g$ is Hilbert-Schmidt on the relevant Hilbert spaces.
\begin{theorem}\label{th:sachatten}
The following are equivalent\\
(i) $\mathcal{H}_g$ is Hilbert-Schmidt on $H^2$.\\
(ii) $\mathcal{H}_g$ is Hilbert-Schmidt on $A^2_\a$ for any  $-1<\a<0$.\\
(iii) $\mathcal{H}_g$ is Hilbert-Schmidt on $\mathcal{D}^2_\a$ for any $0<\a\leq 1$. \\
(iv) $g\in \mathcal{D}$.
\end{theorem}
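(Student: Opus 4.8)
The plan is to exploit the description of Hilbert--Schmidt operators through their matrix with respect to an orthonormal basis: a bounded operator $T$ on a separable Hilbert space $H$ is Hilbert--Schmidt if and only if $\sum_{n}\n{Te_n}^2<\infty$ for some (hence every) orthonormal basis $\{e_n\}$, in which case $\n{T}_{HS}^2=\sum_{n,k}|\langle Te_n,e_k\rangle|^2$. In each of the spaces $H^2$, $A^2_\a$ and $\mathcal{D}^2_\a$ the monomials are pairwise orthogonal, so the normalized monomials $e_n=z^n/\n{z^n}$ form an orthonormal basis. Writing $g(z)=\sum_{n=0}^\infty b_nz^n$ and applying (\ref{Hgcoef}) to $f=z^n$ gives $\hg(z^n)(z)=\sum_{k=0}^\infty\frac{(k+1)b_{k+1}}{n+k+1}z^k$; since the monomials are orthogonal, this produces the explicit matrix entries
$$\langle \hg(e_n),e_k\rangle=\frac{(k+1)b_{k+1}}{n+k+1}\,\frac{\n{z^k}}{\n{z^n}},$$
so that in all three cases
$$\n{\hg}_{HS}^2=\sum_{n,k}\frac{(k+1)^2|b_{k+1}|^2}{(n+k+1)^2}\,\frac{\n{z^k}^2}{\n{z^n}^2}.$$
The entire problem thus reduces to deciding when this double sum is finite.

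First I would dispose of $H^2$, where $\n{z^n}=1$. Summing in $n$ and using $\sum_{n=0}^\infty(n+k+1)^{-2}=\sum_{m>k}m^{-2}\asymp (k+1)^{-1}$ gives $\n{\hg}_{HS}^2\asymp\sum_{k=0}^\infty(k+1)|b_{k+1}|^2=\sum_{j=1}^\infty j|b_j|^2$. As $\sum_j j|b_j|^2=\n{g'}_{A^2}^2$, this series is finite exactly when $g\in\mathcal{D}$, which settles (i)$\Leftrightarrow$(iv).

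For the Bergman and Dirichlet spaces I would first record, via a beta-function integral and Stirling's formula, the asymptotics $\n{z^n}^2_{A^2_\a}\asymp (n+1)^{-(\a+1)}$ and $\n{z^n}^2_{\mathcal{D}^2_\a}\asymp (n+1)^{1-\a}$. Inserting these into the double sum turns the norm quotient into a factor $(n+1)^{s}$ inside the $n$-sum and a compensating power of $(k+1)$ outside, with $s=\a+1$ in the Bergman case and $s=\a-1$ in the Dirichlet case. The decisive elementary estimate is that for $-1<s<1$
$$\sum_{n=0}^\infty\frac{(n+1)^s}{(n+k+1)^2}\asymp (k+1)^{s-1},\qquad k\to\infty,$$
which I would obtain by splitting the sum at $n\approx k$ and using $\sum_{n\le k}(n+1)^s\asymp (k+1)^{s+1}$ on the initial block and $\sum_{n>k}(n+1)^{s-2}\asymp (k+1)^{s-1}$ on the tail. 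The admissible ranges $-1<\a<0$ and $0<\a\le1$ both force $-1<s<1$, so the estimate applies, and a short exponent count shows that in each case the outer power of $(k+1)$ cancels the $(k+1)^{s-1}$ from the inner sum, leaving $\n{\hg}_{HS}^2\asymp\sum_{k=0}^\infty(k+1)|b_{k+1}|^2=\sum_{j=1}^\infty j|b_j|^2$ once more. This yields (ii)$\Leftrightarrow$(iv) and (iii)$\Leftrightarrow$(iv), with comparison constants depending only on $\a$.

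The only genuine work is the inner-sum estimate together with the bookkeeping of exponents: one must verify that for every admissible $\a$ the power of $(k+1)$ contributed by the monomial norms exactly offsets the $(k+1)^{s-1}$ coming from the inner sum, so that all three double sums collapse to the single Dirichlet quantity $\sum_j j|b_j|^2$. The finitely many boundary terms with $n=0$ or $k=0$ contribute only a bounded amount and hence can be absorbed without affecting the convergence or divergence of the sums.
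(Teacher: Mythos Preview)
Your argument is correct and follows essentially the same route as the paper: both compute $\sum_n\n{\hg(e_n)}^2$ with respect to the normalized monomial basis, reduce to the double sum $\sum_{n,k}\frac{(k+1)^2|b_{k+1}|^2}{(n+k+1)^2}\,\n{z^k}^2/\n{z^n}^2$, and use the asymptotic $\sum_n(n+1)^s(n+k+1)^{-2}\asymp(k+1)^{s-1}$ to collapse everything to $\sum_k(k+1)|b_{k+1}|^2\asymp\n{g}_{\mathcal D}^2$. Your write-up is in fact a bit tidier than the paper's, since you unify the Bergman and Dirichlet cases under a single parameter $s\in(-1,1)$ and actually indicate (via the split at $n\approx k$) how the key inner-sum estimate is proved, whereas the paper treats the two cases separately and simply asserts that ``a calculation shows'' the asymptotic.
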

 Note that the case $\a=1$ of (iii) is assertion (i).
\par\medskip
 We close this section
noticing that, as usual, we shall be using the convention that
$C=C(p, \alpha ,q,\beta ) \dots$ will denote a positive constant
which depends only upon the displayed parameters $p, \alpha , q,
\beta \dots $ (which sometimes will be omitted) but not  necessarily
the same at different occurrences.
Moreover, for two real-valued functions $E_1,E_2$ we write $E_1\asymp
E_2$, or $E_1\lesssim E_2$, if there exists a positive constant $C$
independent of the argument such that $\frac{1}{C} E_1\leq E_2\leq C
E_1$, respectively $E_1\leq C E_2$.
\par\medskip
%%%%%%%%%%%%%%%%%%%%%%%%%%%%%%%%%%%%%%%%%%%%%%%%%%%%%%%%%%%%%%%%%%%%%%%%%%%%
\section{Preliminary results}\label{preliminary}
Throughout the paper we shall use the following notation:
If $g(z)=\sum_{k=0}^\infty b_k z^k\in\hol(\D)$ and $n\ge 0$, we set
$$
\Delta_ng(z)=\sum_{k\in I(n)}b_kz^k
$$
where $I(n)=\left\{k\in\N:\, 2^n\le k\le 2^{n+1}-1 \right\}.$
\par Let us recall several distinct characterizations of  $\Lambda(p,\alpha)$
spaces, (see \cite{BSS}, \cite{D}, \cite{GPP} and \cite{MP}).
\begin{other}\label{th:mlip}
Suppose that $1<p<\infty$, $0<\alpha<1$ and $g\in \hol(\D)$. The
following conditions are equivalent
\par (i)\, $g\in \Lambda(p,\a).$
\par (ii)\, $M_p(r,g')=\ogr\left(\frac{1}{(1-r)^{1-\alpha}}\right)$,\,\,\,\,\,as $r\to 1^-$.
\par (iii) \,$||\Delta_ng||_{H^p}=\ogr\left(2^{-n\alpha}\right)$,\quad\,\,\,\, as $n\to\infty$.
\par (iv)\, $||\Delta_ng'||_{H^p}=\ogr\left(2^{n(1-\alpha)}\right)$,\quad as $n\to\infty$.
\par (v)\, $||\Delta_ng''||_{H^p}=\ogr\left(2^{n(2-\alpha)}\right)$,\quad as $n\to\infty$.
\end{other}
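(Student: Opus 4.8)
The plan is to regard (i)$\Leftrightarrow$(ii) as already contained in the Hardy--Littlewood identity \eqref{hl32} and to reduce everything else to two standard facts about $H^p$ with $1<p<\infty$: the uniform boundedness of the sharp dyadic projections $\Delta_n$, and the Marcinkiewicz multiplier theorem. Indeed, condition (ii) is just \eqref{hl32} rewritten, since $(1-r)^{\alpha-1}=(1-r)^{-(1-\alpha)}$, so (i)$\Leftrightarrow$(ii) needs no argument. It then suffices to establish (ii)$\Leftrightarrow$(iv)$\Leftrightarrow$(iii) and (iv)$\Leftrightarrow$(v).

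The central step I would isolate is a radial-growth/block-norm dictionary: for $h\in\hol(\D)$, $\beta>0$ and $1<p<\infty$,
\[
M_p(r,h)=\og\big((1-r)^{-\beta}\big)\ (r\to1^-)\quad\Longleftrightarrow\quad \n{\Delta_n h}_{H^p}=\og(2^{n\beta})\ (n\to\infty).
\]
For $\Leftarrow$ I would write $h_r=\sum_n(\Delta_n h)_r$ (with $f_r(z)=f(rz)$) and combine the triangle inequality in $H^p$ with the elementary bound $M_p(r,\Delta_n h)\le r^{2^n}\n{\Delta_n h}_{H^p}$ --- obtained by factoring $z^{2^n}$ out of $\Delta_n h$ and using monotonicity of $\rho\mapsto M_p(\rho,\cdot)$ --- to get $M_p(r,h)\lesssim\sum_n r^{2^n}2^{n\beta}$; splitting the sum at $2^n\approx 1/(1-r)$ and estimating the resulting (super)geometric series gives $\og((1-r)^{-\beta})$, with no Littlewood--Paley input. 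For $\Rightarrow$ I would evaluate at $r_n=1-2^{-n}$, where $r_n^k\asymp 1$ for $k\in I(n)$, so that $\Delta_n h=T_m(h_{r_n})$ for the multiplier $m_k=r_n^{-k}\mathbf 1_{I(n)}(k)$; as its Marcinkiewicz norm is bounded uniformly in $n$, the multiplier theorem gives $\n{\Delta_n h}_{H^p}\le C\,M_p(r_n,h)\lesssim(1-r_n)^{-\beta}=2^{n\beta}$. Applying the dictionary with $h=g'$ and $\beta=1-\alpha\in(0,1)$ yields (ii)$\Leftrightarrow$(iv).

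For (iii)$\Leftrightarrow$(iv)$\Leftrightarrow$(v) I would use the two-sided Bernstein inequality on blocks, $\n{P'}_{H^p}\asymp 2^n\n{P}_{H^p}$ for $P$ with Taylor spectrum in $I(n)$, which once more follows from the Marcinkiewicz theorem applied to the symbol $k\mapsto k/2^n$ and its reciprocal, localized near frequency $2^n$. The blocks $\Delta_n g'$ and $\Delta_n g''$ differ from $(\Delta_n g)'$ and $(\Delta_n g)''$ only by boundedly many boundary monomials; each such monomial has $H^p$-norm a fixed multiple of $|b_j|$ for some index $j$ in a neighbouring block, and $|b_j|\le\n{\Delta_{n'}g}_{H^p}$ (a Taylor coefficient is dominated by the norm of its block), so these terms are absorbed into the adjacent-block estimates and the shift is harmless at the level of the $\og$-bounds. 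This gives $\n{\Delta_n g'}_{H^p}\asymp 2^n\n{\Delta_n g}_{H^p}$ and $\n{\Delta_n g''}_{H^p}\asymp 2^n\n{\Delta_n g'}_{H^p}$, turning $\n{\Delta_n g}_{H^p}=\og(2^{-n\alpha})$ into (iv) and (v). (Alternatively one may run everything with smooth de la Vall\'ee Poussin blocks and transfer to the sharp $\Delta_n$ via their uniform $H^p$-boundedness for $1<p<\infty$.) Together with (i)$\Leftrightarrow$(ii)$\Leftrightarrow$(iv) this closes all five equivalences.

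The only place where $1<p<\infty$ is genuinely used, and the main obstacle, is the $\Rightarrow$ half of the dictionary and the Bernstein inequality, both resting on the Marcinkiewicz multiplier theorem, which fails at $p=1,\infty$. The remaining points are bookkeeping: the geometric summation after splitting at $2^n\approx 1/(1-r)$, the uniform control of the boundary monomials in the shift between $\Delta_n g'$ and $(\Delta_n g)'$, and --- what I expect to be the most delicate computation --- checking that $m_k=r_n^{-k}\mathbf 1_{I(n)}(k)$ has Marcinkiewicz norm bounded independently of $n$, i.e.\ that $\sum_{k\in I(n)}|m_{k+1}-m_k|\lesssim 1$ uniformly.
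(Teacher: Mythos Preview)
Your proposal is correct. Note, however, that the paper does not give its own proof of this statement: Theorem~\ref{th:mlip} is quoted from the literature (the paper cites \cite{BSS}, \cite{D}, \cite{GPP}, \cite{MP}) and used as a black box, so there is no proof in the paper to compare against.

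That said, your argument is essentially the standard one, and in fact the ingredients you use are exactly those the paper itself relies on elsewhere. The equivalence (i)$\Leftrightarrow$(ii) is indeed just \eqref{hl32}. Your radial-growth/block-norm dictionary is the content of \cite[Theorem~2.1]{MP}, which the paper quotes as Theorem~\ref{th:dec}(i); your $\Leftarrow$ direction via $M_p(r,\Delta_n h)\le r^{2^n}\|\Delta_n h\|_{H^p}$ and geometric splitting is clean, and for the $\Rightarrow$ direction your Marcinkiewicz computation for $m_k=r_n^{-k}\mathbf 1_{I(n)}(k)$ is right (the variation over $I(n)$ is $\sum_{k\in I(n)}r_n^{-k}(r_n^{-1}-1)\lesssim 2^n\cdot e^2\cdot 2^{-n}\asymp 1$, and the boundary jumps are $O(1)$). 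Alternatively one can bypass Marcinkiewicz here via the estimate $\|\Delta_n h\|_{H^p}\le r_n^{-2^{n+1}}M_p(r_n,\Delta_n h)$ from \cite[Lemma~3.1]{MP} combined with $M_p(r_n,\Delta_n h)=\|\Delta_n(h_{r_n})\|_{H^p}\le C\|h_{r_n}\|_{H^p}$ from M.~Riesz. Your two-sided block Bernstein inequality is the paper's Lemma~\ref{le:mult} (taken from \cite{Pabook}), and your handling of the index shift between $\Delta_n g'$ and $(\Delta_n g)'$ via the two boundary monomials, with $|b_j|\le\|\Delta_{n'}g\|_{H^1}\le\|\Delta_{n'}g\|_{H^p}$, is correct in both directions; for (iv)$\Rightarrow$(iii) one recovers $|b_{2^n}|$ from the coefficient $2^n b_{2^n}$ of $z^{2^n-1}$ in $\Delta_{n-1}g'$, which is the only point one might want to make more explicit.
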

\begin{remark}\label{re:mlip}
The corresponding results  for the little-oh space
$\lambda(p,\alpha)$ remain true, and they  can be proved following the
proofs in the  references for Theorem \ref{th:mlip}.
\end{remark}
\medskip
Suppose $W(z)=\sum_{k\in J}b_kz^k$ is a polynomial,
so $J$ is a finite subset of $\mathbb{N}$, and $ f(z)=\sum_{k=0}^{\infty}a_kz^k\in \hol(\D)$.
We consider the Hadamard product
$$
(W\ast f)(z)=\sum_{k\in J}b_ka_kz^k,
$$
and observe that if $f\in H^1$ then
$$
(W\ast f)(e^{it})=\frac{1}{2\pi}\int_0^{2\pi}W(e^{i(t-\theta)})f(e^{i\theta})\,d\theta
$$
is the usual convolution.
\par\medskip
If $\Phi:\R\to\C$ is a $C^\infty$-function such that $\supp(\Phi)$  is a compact subset of  $(0, \infty)$
we set
$$
A_\Phi=\max_{s\in\R}|\Phi(s)|+\max_{s\in\R}|\Phi''(s)|,$$
and for $N=1,2,\dots$, we consider  the polynomials
$$
W_N^\Phi(z)=\sum_{k\in\mathbb N}\Phi\left (\frac{k}{N}\right )z^k.
$$
\par
Now, we are ready to state the next result on smooth  partial sums.
\begin{other}\label{th:cesaro}
Assume that $\Phi:\R\to\C$ is a $C^\infty$-function with $\supp(\Phi)$ a compact set
contained in $(0, \infty)$. Then
\begin{itemize}\item[(i)]
 There exists an absolute constant $C>0$ such that if\, $m\in \{ 0,
1, 2, \dots \} $ and $N\in \{ 1, 2, 3, \dots \} $ then
$$
\left|W_N^\Phi(e^{i\theta})\right|\le C\min\left\{
N\max_{s\in\R}|\Phi(s)|,
N^{1-m}|\theta|^{-m}\max_{s\in\R}|\Phi^{(m)}(s)|
\right\},
$$
for $0<|\theta|<\pi.$
\item[(ii)] There exists a positive  constant $C$ such
that
$$
\left|(W_N^\Phi\ast f)(e^{i\theta})\right|\le CA_\Phi
M(|f|)(e^{i\theta}),\quad \text{for all $f\in H^1$},
$$
where $M$ is the Hardy-Littlewood maximal-operator, that is,
$$
M(|f|)(e^{i\theta})=\sup_{0<h<\pi}\frac{1}{2h}\int_{\theta-h}^{\theta+h}|f(e^{it})|\,dt.
$$
\item[(iii)]
For every $p\in (1,\infty)$ there exists $C_p>0$ such that
$$
||W_N^\Phi\ast f||_{H^p}\le C_p A_\Phi ||f||_{H^p},\quad f\in H^p.
$$
\item[(iv)]
For every $p\in (1,\infty)$ and $\a>-1$ there is $C_p>0$ such that
$$
||W_N^\Phi\ast f||_{A^p_\a}\le C_p A_\Phi ||f||_{A^p_\a},\quad f\in A^p_\alpha.
$$
\end{itemize}
\end{other}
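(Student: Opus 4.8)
The plan is to prove the four parts in the natural order, using (i) as the engine that produces the pointwise kernel estimates behind (ii), and then deducing (iii) and (iv) from (ii) by soft arguments (the maximal theorem and a radial slicing). First I would fix $\supp(\Phi)\subset[a,b]$ with $0<a<b$, so that $W_N^\Phi(e^{i\theta})=\sum_{aN\le k\le bN}\Phi(k/N)e^{ik\theta}$ has at most $CN$ nonzero terms, where $C$ depends only on $\supp(\Phi)$. The bound by $CN\max_s|\Phi(s)|$ (the case $m=0$) is then immediate. For $m\ge 1$ I would perform summation by parts $m$ times, equivalently multiplying by $(e^{i\theta}-1)^m$ and collecting the $m$-th finite difference, to obtain $(e^{i\theta}-1)^m W_N^\Phi(e^{i\theta})=\sum_k\big(\Delta^m[\Phi(\cdot/N)]\big)(k)\,e^{ik\theta}$ up to an index shift. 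A discrete Taylor/mean-value estimate gives $|\Delta^m[\Phi(\cdot/N)](k)|\le N^{-m}\max_s|\Phi^{(m)}(s)|$, and still only $O(N)$ indices contribute, so $|e^{i\theta}-1|^m\,|W_N^\Phi(e^{i\theta})|\le CN^{1-m}\max_s|\Phi^{(m)}(s)|$. Since $|e^{i\theta}-1|\asymp|\theta|$ for $0<|\theta|<\pi$, this is exactly the second term in the minimum, and (i) follows.

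For part (ii) — the crux — I would feed the cases $m=0$ and $m=2$ of (i) into a single radially decreasing majorant. Combining them gives $|W_N^\Phi(e^{i\phi})|\le CA_\Phi\min\{N,N^{-1}|\phi|^{-2}\}\le CA_\Phi\,\psi_N(\phi)$ with $\psi_N(\phi)=N/(1+(N\phi)^2)$, which is even, nonincreasing in $|\phi|$, and of uniformly bounded $L^1(\T)$ norm. Writing the Hadamard product as the convolution $(W_N^\Phi\ast f)(e^{i\theta})=\frac{1}{2\pi}\int_{-\pi}^{\pi}W_N^\Phi(e^{i(\theta-\phi)})f(e^{i\phi})\,d\phi$, I would split the domain into the core $|\phi|<1/N$ and the dyadic annuli $|\phi|\asymp 2^j/N$. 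On the $j$-th annulus the decay bound reads $|W_N^\Phi|\le CA_\Phi N/2^{2j}$, while $\int_{|\phi|<2^{j+1}/N}|f(e^{i(\theta-\phi)})|\,d\phi\le C(2^j/N)M(|f|)(e^{i\theta})$ by definition of the maximal function, so the $j$-th contribution is $\le CA_\Phi 2^{-j}M(|f|)(e^{i\theta})$; summing the geometric series over $j\ge 0$ and handling the core by the bound $N$ produces $|(W_N^\Phi\ast f)(e^{i\theta})|\le CA_\Phi M(|f|)(e^{i\theta})$. This is the standard principle that convolution against a radially decreasing integrable kernel is dominated pointwise by the maximal function, and I expect the bookkeeping of this annular sum to be the main technical obstacle.

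Parts (iii) and (iv) should then be soft. Since $\Phi$ vanishes near $0$, the function $W_N^\Phi\ast f$ has only positive-index Taylor coefficients and is therefore analytic, so its $H^p$ norm equals the $L^p(\T)$ norm of its boundary values. Combining (ii) with the Hardy--Littlewood maximal theorem (boundedness of $M$ on $L^p(\T)$ for $p>1$) gives $\|W_N^\Phi\ast f\|_{H^p}\le CA_\Phi\|M(|f|)\|_{L^p(\T)}\le C_pA_\Phi\|f\|_{H^p}$, which is (iii). For (iv) I would slice radially: for fixed $r\in(0,1)$ one has the identity $(W_N^\Phi\ast f)(re^{i\theta})=(W_N^\Phi\ast f_r)(e^{i\theta})$, where $f_r(e^{i\theta})=f(re^{i\theta})$, which reduces the circular $L^p$ norm at radius $r$ to the situation of (iii) and yields $M_p(r,W_N^\Phi\ast f)\le C_pA_\Phi\,M_p(r,f)$ uniformly in $r$. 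Integrating the $p$-th power against the Bergman weight $(1-|z|^2)^\alpha$ in polar coordinates, where $\|F\|_{A^p_\alpha}^p$ is a positive multiple of $\int_0^1 M_p(r,F)^p(1-r^2)^\alpha r\,dr$, then gives $\|W_N^\Phi\ast f\|_{A^p_\alpha}\le C_pA_\Phi\|f\|_{A^p_\alpha}$, completing (iv) and the proof of Theorem~\ref{th:cesaro}.
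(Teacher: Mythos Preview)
Your proposal is correct and follows the standard route: summation by parts for (i), domination by a radially decreasing integrable majorant to reach the maximal function in (ii), the Hardy--Littlewood maximal theorem for (iii), and radial slicing for (iv). The paper itself does not give a proof of this theorem at all; it simply states that ``Theorem~\ref{th:cesaro} follows from the results and proofs in \cite[p.~111--113]{Pabook}'', i.e.\ it defers entirely to Pavlovi\'c's monograph. What you have written is precisely the argument one finds there (or would reconstruct from it), so there is no meaningful difference in approach to discuss.

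One small remark: you correctly note that the constant in (i) depends on $\supp(\Phi)$ through the length $b-a$ of a containing interval, whereas the paper calls $C$ ``absolute''. This is a harmless imprecision in the statement, since in every application in the paper the support of $\Phi_N$ is contained in a fixed interval such as $(1/2,4)$; your formulation is the accurate one.
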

\par
 Theorem~\ref{th:cesaro} follows from
the results and proofs in \cite[p. $111-113$]{Pabook}.
\par
\medskip The following lemma also plays an  essential role in our work.
\begin{lemma}\label{le:1}
Suppose that\, $1<p<\infty $ and $\alpha>-1$. For $N=1, 2, \dots$,
let $a_N=1-\frac1N$ and define the functions
\begin{equation}\label{eq:psialpha}
\psi_{N,\a}(s)=\frac{1}{N^{3-\frac{2+\a}{p}}}\int_0^1\frac{t^{sN}}{(1-a_Nt)^2}\,dt,\quad
s>0.
\end{equation}
and
\begin{equation}\label{eq:psialpha2}
\varphi_{N,\a}(s)=\frac{1}{\psi_{N,\a}(s)}\quad
s>0.
\end{equation}
Then: \begin{itemize} \item[(i)]$\psi_{N,\a}, \varphi_{N,\a} \in
C^\infty((0,\infty))$. \item[(ii)] Asymptotically,
$$
|\psi_{N,\a}(s)|\asymp\frac{1}{N^{2-\frac{2+\a}{p}}},\quad
\frac12<s<4,\quad N\to \infty.
$$
\item[(iii)]For each $m\in\N$ there is a constant $C(m)>0$ (depending on $m$ but
not on $N$) such that
$$
|\psi^{(m)}_{N,\a}(s)|\le\frac{C(m)}{N^{2-\frac{2+\a}{p}}},\quad
\frac12<s<4,\quad N=1,2, \dots.
$$
\item[(iv)]For each $m\in\N$ there is a constant $C(m)>0$ (depending on $m$ but
not on $N$) such that
\begin{equation}\label{eq:psialpha3}
|\varphi^{(m)}_{N,\a}(s)|\le C(m)N^{2-\frac{2+\a}{p}},\quad
\frac12<s<4,\quad N=1,2,  \dots.
\end{equation}
\end{itemize}
\end{lemma}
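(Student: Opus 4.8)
The plan is to reduce everything to one integral. Writing $I_N(s)=\int_0^1 t^{sN}(1-a_Nt)^{-2}\,dt$, one has $\psi_{N,\a}(s)=N^{(2+\a)/p-3}I_N(s)$, and $I_N$ is \emph{independent} of $\a$; the parameter $\a$ affects only the power of $N$ in front. Consequently (ii) and (iii) are equivalent to the two uniform estimates $I_N(s)\asymp N$ and $|I_N^{(m)}(s)|\le C(m)N$ for $\tfrac12<s<4$, while (i) and (iv) become formal once these are in hand. For (i) I would differentiate under the integral sign: since $a_N<1$ the factor $(1-a_Nt)^{-2}$ is bounded by $N^2$ on $[0,1]$, and for $s$ in a compact subinterval of $(0,\infty)$ the function $t^{sN}|\log t|^m$ has an integrable dominating majorant on $(0,1)$, so $I_N^{(m)}(s)=N^m\int_0^1 t^{sN}(\log t)^m(1-a_Nt)^{-2}\,dt$ and $\psi_{N,\a}\in C^\infty((0,\infty))$. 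As the integrand of $I_N$ is strictly positive, $\psi_{N,\a}>0$, whence $\varphi_{N,\a}=1/\psi_{N,\a}\in C^\infty((0,\infty))$ as well.

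The core of the argument is the substitution $u=N(1-t)$, which simultaneously resolves the concentration of $t^{sN}$ near $t=1$ and the near-singularity of $(1-a_Nt)^{-2}$ at $t=1/a_N$. Since $1-a_N(1-u/N)=(1+a_Nu)/N$, a direct computation gives
$$
I_N^{(m)}(s)=N^{m+1}\int_0^N\frac{(1-u/N)^{sN}\,(\log(1-u/N))^m}{(1+a_Nu)^2}\,du .
$$
I would then use the elementary inequalities $(1-u/N)^{sN}\le e^{-su}$ (from $\log(1-x)\le -x$) and, for $0\le u\le N/2$, $|\log(1-u/N)|\le 2u/N$ (from $-\log(1-x)\le x/(1-x)$), together with $(1+a_Nu)^{-2}\le 1$. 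On $0<u<N/2$ these bounds yield
$$
\int_0^{N/2}\frac{(1-u/N)^{sN}|\log(1-u/N)|^m}{(1+a_Nu)^2}\,du\le \frac{2^m}{N^m}\int_0^\infty e^{-su}u^m\,du=\frac{2^m\,m!}{s^{m+1}N^m}\le\frac{C(m)}{N^m}\qquad(s>\tfrac12).
$$
For the complementary range $N/2<u<N$, one has $t=1-u/N<\tfrac12$, and the crude bound $t^{sN}|\log t|^m\le C_m\,t^{N/4}\le C_m\,2^{-N/4}$ (valid for all $N$, with $C_m$ independent of $N$, since $s>\tfrac12$) shows this piece is exponentially small and therefore also $O(N^{-m})$. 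Multiplying by $N^{m+1}$ gives $|I_N^{(m)}(s)|\le C(m)N$, which is (iii); its $m=0$ case is the upper half of (ii).

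For the lower bound in (ii) I would discard everything but $0<u<1$. There $u/N\le\tfrac12$ (for $N\ge2$), so $\log(1-u/N)\ge -2u/N$ gives $(1-u/N)^{sN}\ge e^{-2su}\ge e^{-8}$ for $s<4$, while $(1+a_Nu)^{-2}\ge\tfrac14$; hence $I_N(s)=N\int_0^N\frac{(1-u/N)^{sN}}{(1+a_Nu)^2}\,du\ge N\cdot\tfrac14 e^{-8}$, uniformly in $s\in(\tfrac12,4)$. I expect the only real delicacy to be the uniformity in $N$ and $s$ in these estimates, and in particular verifying that the exponential decay in the tail $u>N/2$ swallows the prefactor $N^{m+1}$ and the growth of $|\log(1-u/N)|^m$ near $u=N$; this is exactly what the crude bound $t^{sN}|\log t|^m\le C_m\,t^{N/4}$ takes care of, and the finitely many small values of $N$ are handled trivially since each $\psi_{N,\a}$ is then a fixed positive smooth function on $(\tfrac12,4)$.

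Finally, (iv) follows formally from (ii) and (iii). By the Fa\`a di Bruno formula, $\varphi_{N,\a}^{(m)}=(1/\psi_{N,\a})^{(m)}$ is a finite linear combination, with constants depending only on $m$, of terms of the form $\psi_{N,\a}^{(k_1)}\cdots\psi_{N,\a}^{(k_j)}\,\psi_{N,\a}^{-(j+1)}$ with $k_1+\cdots+k_j=m$ and each $k_i\ge1$. By (iii) the numerator is $O\!\big(N^{-j(2-(2+\a)/p)}\big)$, and by the lower bound in (ii) the factor $\psi_{N,\a}^{-(j+1)}$ contributes $O\!\big(N^{(j+1)(2-(2+\a)/p)}\big)$; the exponents combine to $N^{\,2-(2+\a)/p}$, which is precisely the estimate (\ref{eq:psialpha3}) claimed for $\varphi_{N,\a}^{(m)}$.
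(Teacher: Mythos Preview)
Your argument is correct, but it takes a different route from the paper's. You perform the Laplace-type substitution $u=N(1-t)$ and then split the resulting integral into a main part $0<u<N/2$ (controlled by $e^{-su}u^m$) and a tail $N/2<u<N$ (exponentially small). This works and, with slightly more care, would even identify the leading asymptotics of $I_N(s)$ as $N\to\infty$. The paper, by contrast, avoids any change of variable. For (ii) it simply bounds $\int_0^1 t^{sN}(1-a_Nt)^{-2}\,dt$ above by $\int_0^1(1-a_Nt)^{-2}\,dt=N$ and below by restricting to $t\in[a_N,1]$, where $t^{sN}\ge a_N^{4N}=(1-1/N)^{4N}$ and the remaining integral is explicit. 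For (iii) the paper's trick is the single observation
\[
\sup_{0<t<1,\;1/2<s<4}\Bigl(\log\tfrac{1}{t^N}\Bigr)^{\!m} t^{sN}
\;\le\;\sup_{0<x<1}\Bigl(\log\tfrac{1}{x}\Bigr)^{\!m} x^{1/2}\;=\;C(m),
\]
obtained by setting $x=t^N$; this immediately reduces $|I_N^{(m)}(s)|$ to the same integral $\int_0^1(1-a_Nt)^{-2}\,dt=N$ used in (ii). For (iv) the paper argues by induction via the Leibniz rule applied to $1=\psi_{N,\a}\varphi_{N,\a}$, whereas you invoke Fa\`a di Bruno; both are standard and equivalent. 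In short, the paper's proof is shorter and more elementary, while your substitution gives a clearer picture of \emph{why} the estimates hold (concentration near $t=1$) at the cost of a little more bookkeeping.
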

\begin{proof}
\par (i)\, is clear.
\par (ii).\, We note that
 \begin{equation*}
\int_0^1\frac{t^{sN}}{(1-a_Nt)^2}\,dt \le
\int_{0}^1\frac{1}{(1-a_Nt)^2}\,dt=N,
\end{equation*}
while if $\frac12<s<4$, then
\begin{align*}
\int_0^1\frac{t^{sN}}{(1-a_Nt)^2}\,dt &\ge
\int_{a_N}^1\frac{t^{sN}}{(1-a_Nt)^2}\,dt\\
 &\ge  (1-a_N)\frac{a_N^{4N}}{(1-a_N^2)^2}\\
 &=\frac{(1-\frac{1}{N})^{4N}}{(2-\frac{1}{N})^2} N\\
 &\ge CN,
\end{align*}
so for $\frac12<s<4$,
$$
|\psi_{N,\a}(s)|\asymp\frac{1}{N^{2-\frac{2+\a}{p}}},\quad
 \mbox{as}\,\,N\to\infty.
$$
\par (iii).\, Since
$$
\sup_{0<t<1,\,\frac12<s<4}\left(\log
\frac{1}{t^N}\right)^m t^{sN}\le \sup_{0<x<1}\left(\log
\frac{1}{x}\right)^m x^{1/2}=C(m)<\infty,
$$
we deduce that
\begin{equation*}\begin{split}
|\psi^{(m)}_{N,\a}(s)& |=
 \frac{1}{N^{3-\frac{2+\a}{p}}}\int_0^1\frac{\left(\log \frac{1}{t^N}\right)^m
t^{sN}}{(1-a_Nt)^2}\,dt
\\ & \le C(m) \frac{1}{N^{3-\frac{2+\a}{p}}}\int_0^1\frac{1}{(1-a_Nt)^2}\,dt
\\ & \le C(m) \frac{1}{N^{2-\frac{2+\a}{p}}},\quad
\frac12<s<4,\quad N=1,2, \dots.
\end{split}\end{equation*}
\par (iv).\,
For $m=0$, the assertion follows from part (ii). For $m=1$, using parts (ii)  and (iii), we have
$$
|\varphi'_{N,\a}(s)|\le \frac{|\psi'_{N,\a}(s)|}{|\psi_{N,\a}(s)|^2}\le C(1) N^{2-\frac{2+\a}{p}},\quad
\frac12<s<4.
$$
\par Now we shall proceed by induction. Assume that  (\ref{eq:psialpha3}) holds for $j=0,1\dots m-1$. Since
$1=\varphi_{N,\a}(s)\psi_{N,\a}(s)$, we have
$$
0=(\varphi_{N,\a}(s)\psi_{N,\a}(s))^{(m)}(s)=\sum_{j=0}^m
\binom{m} {j} \psi^{(m-j)}_{N,\a}(s)\varphi^{(j)}_{N,\a}(s),
$$
 which implies
$$
|\varphi_{N,\a}^{(m)}(s)|\le\frac{\sum_{j=0}^{m-1}\binom{m} {j}\left|\psi^{(m-j)}_{N,\a}(s)\varphi^{(j)}_{N,\a}(s)\right|}{|\psi_{N,\a}(s)|},\quad
\frac12<s<4.
$$
 This  together with the induction hypothesis and part (iii) concludes the proof.
\end{proof}
\par\medskip
We shall use also the following lemma which follows easily from
results in \cite{MP}.
\begin{lemma}\label{le:eqno}
Assume that $0<p<\infty$, $\alpha>-1$, $N\in\mathbb{N}$, and set
$$
h(z)=\sum_{N/2\leq k \leq 4N}a_kz^k.
$$
Then
$$
||h||_{A^p_\a}\asymp N^{-\frac{1+\alpha}{p}}||h||_{H^p}.
$$
\end{lemma}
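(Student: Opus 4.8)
The plan is to pass to integral means and reduce the whole statement to two facts: an elementary upper estimate for $M_p(r,h)$ reflecting that $h$ is a high–frequency polynomial, and the substantive fact that these means are essentially constant in the top collar $1-\tfrac1N\le r<1$. Writing the Bergman norm in polar coordinates, using $\frac1{2\pi}\int_0^{2\pi}|h(re^{it})|^p\,dt=M_p(r,h)^p$ together with $dA=\frac1\pi r\,dr\,d\theta$, gives
\[
\|h\|_{A^p_\a}^p=2(\alpha+1)\int_0^1(1-r^2)^\alpha r\,M_p(r,h)^p\,dr .
\]
So the lemma reduces to showing that this weighted integral is comparable to $N^{-(1+\alpha)}\|h\|_{H^p}^p$.

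For the upper bound I would exploit that every frequency of $h$ is at least $N/2$. Set $m=\lceil N/2\rceil$; since $z^m$ divides $h$, write $h(z)=z^m\tilde h(z)$ with $\tilde h$ a polynomial. As $|z^m|=1$ on $\T$ one has $\|\tilde h\|_{H^p}=\|h\|_{H^p}$, and since $M_p(r,h)=r^m M_p(r,\tilde h)$ and $M_p(\cdot,\tilde h)$ is nondecreasing, it follows that $M_p(r,h)\le r^{N/2}\|h\|_{H^p}$ for all $0<r<1$. Substituting into the identity and evaluating the resulting Beta integral,
\[
\int_0^1(1-r^2)^\alpha r^{1+(N/2)p}\,dr=\tfrac12\,B\!\left(\tfrac{Np}{4}+1,\alpha+1\right)\asymp N^{-(1+\alpha)},
\]
yields $\|h\|_{A^p_\a}^p\lesssim N^{-(1+\alpha)}\|h\|_{H^p}^p$. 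This half uses only the monotonicity of integral means and a standard asymptotic for the Beta function, and works for all $p>0$.

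For the lower bound I would localize to the collar. By the results of Mateljevi\'c--Pavlovi\'c \cite{MP}, a polynomial with spectrum in $[N/2,4N]$ satisfies $M_p(r,h)\asymp M_p(1,h)=\|h\|_{H^p}$ uniformly for $1-\tfrac1N\le r<1$; in particular $M_p(r,h)\ge c\|h\|_{H^p}$ there. Restricting the integral to $\left[1-\tfrac1N,\,1-\tfrac1{2N}\right]$, on which $1-r^2\asymp\tfrac1N$ and $r\ge\tfrac12$, and which has length $\tfrac1{2N}$, gives
\[
\|h\|_{A^p_\a}^p\ge 2(\alpha+1)\int_{1-1/N}^{1-1/(2N)}(1-r^2)^\alpha r\,M_p(r,h)^p\,dr\gtrsim N^{-\alpha}\cdot N^{-1}\cdot\|h\|_{H^p}^p,
\]
i.e. $\|h\|_{A^p_\a}\gtrsim N^{-(1+\alpha)/p}\|h\|_{H^p}$. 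Combined with the upper bound this proves the asserted comparison.

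The one substantive input, and the main obstacle, is the collar comparison $M_p(r,h)\asymp\|h\|_{H^p}$ on $1-\tfrac1N\le r<1$. For $p>1$ it can be made self-contained via Theorem~\ref{th:cesaro}(iii): choosing a $C^\infty$ bump $\Phi$ with $\Phi\equiv1$ on $[\tfrac12,4]$ and $\supp\Phi\subset(0,\infty)$, the multiplier $\varphi(s)=\Phi(s)r^{-sN}=\Phi(s)e^{cs}$ with $c=-N\log r$ bounded has $A_\varphi\le C$ uniformly in $N$, and $W_N^\varphi\ast h_r=h$ where $h_r(z)=h(rz)$; since $\|h_r\|_{H^p}=M_p(r,h)$ this yields $\|h\|_{H^p}\le C A_\varphi\,M_p(r,h)$. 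For the full range $0<p<\infty$ stated in the lemma one must instead invoke \cite{MP} directly, the multiplier theorem being available only for $p>1$; the remaining steps are routine.
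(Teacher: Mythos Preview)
Your argument is correct and follows essentially the same route as the paper. The paper invokes \cite[Lemma~3.1]{MP} to obtain the two-sided pointwise bound $r^{4N}\|h\|_{H^p}\le M_p(r,h)\le r^{N/2}\|h\|_{H^p}$ for all $0<r<1$, then integrates both sides against $(1-r^2)^\alpha r$ over the full interval $[0,1]$ and identifies the resulting Beta integrals; your upper bound is identical (and you nicely observe it needs only monotonicity of integral means, not \cite{MP}), while for the lower bound you localize the same \cite{MP} estimate to the collar $[1-\tfrac1N,1-\tfrac1{2N}]$ instead of integrating globally --- a cosmetic difference only.
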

\begin{proof}
Assume $N$ is even. (If $N$ is odd the proof can be adjusted by using $[\frac{N}{2}]+1$ instead of $\frac{N}{2}$).
Using  \cite[Lemma 3.1]{MP} we have for each $0<r<1$,
$$
||h||_{H^p}^p r^{p4N}\leq M_p^p(r, h)\leq ||h||_{H^p}^p r^{p\frac{N}{2}}
$$
which gives
$$
||h||_{H^p}^p \int_0^1r^{p4N+1}(1-r^2)^{\a}dr\leq \frac{||h||_{A^p_{\a}}^p}{\a+1}\leq ||h||_{H^p}^p\int_0^1 r^{p\frac{N}{2}+1}(1-r^2)^{\a}dr.
$$
 Each of the two integrals appearing above can be expressed in terms of the usual  Beta function, and using the
 Stirling asymptotic series we can see  that each of the integrals grows as $N^{-(\a+1)}$ as $N\to \infty$, and the assertion follows.
\end{proof}
%%%%%%%%%%%%%%%%%%%%%%%%%%%%%%%%%%%%%%%%%%%%%%%%%%%%%%%%%%%%%%%%%%%%%%%%%%%%%%%%%%%%%%%%%%%%%%%%%%%%%%%%%%%%%%%5
\par\medskip
\section{Necessary conditions for the boundedness of $\hg$}\label{necessity}
Putting together the conditions stated in Theorem~\ref{th:hardy1p2}
and Theorem~\ref{th:hardy2pinfty} as necessary for the boundedness
of the operator on $H^p$ for $1<p\le 2$ and $2<p<\infty$,
respectively, yields the following statement.
\begin{theorem}\label{th:hardy.nec} Suppose that $1<p<\infty $ and
$g\in \hol(\D)$. If $\hg$ is bounded from $H^p$ to
$H^p$, then $g\in \Lambda\left(p,\frac{1}{p}\right )$.
\end{theorem}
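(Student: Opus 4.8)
The plan is to reduce membership in $\Lambda\left(p,\frac1p\right)$ to the coefficient characterization in Theorem~\ref{th:mlip}, specifically condition (iv): $g\in\Lambda\left(p,\frac1p\right)$ precisely when $\|\Delta_ng'\|_{H^p}=\ogr(2^{n(1-1/p)})$ as $n\to\infty$. Since the Taylor coefficients of $g'$ are exactly the numbers $(k+1)b_{k+1}$ appearing in the coefficient formula (\ref{Hgcoef}) for $\hg$, the operator $\hg$ is naturally suited to recovering weighted dyadic blocks of $g'$, and I would test $\hg$ against functions concentrated near the boundary point $1$.

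Concretely, for $N=2^n$ put $a_N=1-\frac1N$ and take the test function $f_N(z)=(1-a_Nz)^{-2}$. A standard estimate gives $\|f_N\|_{H^p}\asymp N^{2-1/p}$, and $f_N\in H^1$, so $\hg(f_N)$ is well defined. Using (\ref{Hgcoef}) together with the definition (\ref{eq:psialpha}) of $\psi_{N,\alpha}$ (with $s=k/N$, for any fixed $\alpha>-1$), the moments are $\int_0^1 t^kf_N(t)\,dt=N^{3-\frac{2+\alpha}{p}}\psi_{N,\alpha}(k/N)$, whence
$$\hg(f_N)(z)=N^{3-\frac{2+\alpha}{p}}\sum_{k=0}^\infty (k+1)b_{k+1}\,\psi_{N,\alpha}(k/N)\,z^k.$$
The idea is then to undo the multiplier $\psi_{N,\alpha}(k/N)$ by a smooth Fourier multiplier. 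Fix $\Phi\in C^\infty(\R)$ with $\supp\Phi$ a compact subset of $(0,\infty)$ satisfying $\Phi\equiv1$ on $[1,2]$ and $\supp\Phi\subset[\tfrac12,4]$, and set $\Psi=\Phi\,\varphi_{N,\alpha}$, where $\varphi_{N,\alpha}=1/\psi_{N,\alpha}$ as in (\ref{eq:psialpha2}). Then $\Psi$ is $C^\infty$ with compact support in $(0,\infty)$, and convolving with the smooth partial-sum polynomial $W_N^\Psi$ and using $\varphi_{N,\alpha}\psi_{N,\alpha}\equiv1$ collapses the multiplier:
$$W_N^{\Psi}\ast\hg(f_N)=N^{3-\frac{2+\alpha}{p}}\sum_{k=0}^\infty \Phi(k/N)(k+1)b_{k+1}\,z^k.$$

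For the norm estimate I would invoke Theorem~\ref{th:cesaro}(iii), whose governing quantity is $A_\Psi=\max|\Psi|+\max|\Psi''|$. By Lemma~\ref{le:1}(iv) the derivatives of $\varphi_{N,\alpha}$ up to order two are $\ogr(N^{2-\frac{2+\alpha}{p}})$ on $[\tfrac12,4]$, while $\Phi$ and its derivatives are fixed, so $A_\Psi\lesssim N^{2-\frac{2+\alpha}{p}}$. Combining $\|W_N^\Psi\ast h\|_{H^p}\le C_pA_\Psi\|h\|_{H^p}$ with the boundedness of $\hg$ and $\|f_N\|_{H^p}\asymp N^{2-1/p}$ gives
$$\Big\|\sum_{k}\Phi(k/N)(k+1)b_{k+1}z^k\Big\|_{H^p}=N^{-(3-\frac{2+\alpha}{p})}\,\|W_N^\Psi\ast\hg(f_N)\|_{H^p}\lesssim N^{1-1/p}\,\|\hg\|_{H^p\to H^p},$$
and one checks that every power of $N$ involving $\alpha$ cancels, leaving exactly the exponent $1-\frac1p$. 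Finally, since $\Phi\equiv1$ on $[1,2]$, we have $\Phi(k/N)=1$ for every $k$ in the dyadic block $I(n)$ (recall $N=2^n$), so applying the sharp block projection $\Delta_n$ — which is uniformly bounded on $H^p$ for $1<p<\infty$ by the M.~Riesz theorem on partial sums — recovers $\Delta_ng'$ and yields $\|\Delta_ng'\|_{H^p}\lesssim 2^{n(1-1/p)}\|\hg\|$. By Theorem~\ref{th:mlip}(iv) this proves $g\in\Lambda\left(p,\frac1p\right)$.

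I expect the main obstacle to be the careful bookkeeping that makes the whole scheme work: verifying $A_\Psi\lesssim N^{2-\frac{2+\alpha}{p}}$ through Lemma~\ref{le:1}(iv) (which is exactly what that lemma was built for) and confirming the precise cancellation of the $N$-powers and of the auxiliary parameter $\alpha$, so that the estimate lands on the sharp exponent $2^{n(1-1/p)}$ rather than anything larger. The use of the smooth multiplier $\Psi=\Phi\varphi_{N,\alpha}$ to invert the moment weights $\psi_{N,\alpha}(k/N)$ is the conceptual heart of the argument.
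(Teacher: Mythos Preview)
Your proposal is correct and follows essentially the same approach as the paper's proof: the same test functions $f_N=(1-a_Nz)^{-2}$ (the paper normalizes them, you do not), the same moment identity via $\psi_{N,\alpha}$, a smooth multiplier built from $\varphi_{N,\alpha}$ to invert the moment weights, Theorem~\ref{th:cesaro}(iii) for the multiplier bound, and the M.~Riesz projection to extract $\Delta_n g'$. The only cosmetic differences are that the paper fixes $\alpha=p-1$ (so the cancellation you note happens silently) and describes the multiplier as an abstract $C^\infty$ extension $\Phi_N$ of $\varphi_N\big|_{[1,2]}$ rather than your explicit product $\Psi=\Phi\cdot\varphi_{N,\alpha}$.
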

\medskip\par
\begin{pf}
Let  $g(z)=\sum_{k=0}^\infty b_kz^k$ be the Taylor expansion of $g$. We start by considering the function
$\psi_{N, \alpha}$  and $\phi_{N, \a}=\frac{1}{\psi_{N, \a}}$ defined in Lemma \ref{le:1}
with  $\alpha=p-1$  and, for simplicity, write $\psi_N=\psi_{N, p-1} $ and $\varphi_{N}=\varphi_{N,p-1}$.
\par
For each $N=1,2,\dots$, we can find  a $C^\infty$-function
 $\Phi_N:\R\to\C$ with $\supp\left(\Phi_N\right)\subset \left(\frac12, 4 \right)$, satisfying
\begin{equation}\label{eq:nh1}
\Phi_N(s)=\varphi_N(s),\quad 1\le s\le 2,
\end{equation}
and such that, by using part (iv) of Lemma \ref{le:1},  for each $m\in\N$ there exists $C(m)$ (independent of $N$) with
\begin{equation}\label{eq:nh2}
|\Phi^{(m)}_N(s)|\le C(m)N^{1-\frac{1}{p}},\quad
s\in\R,\quad N=1,2,\dots.
\end{equation}
In particular we have
\begin{equation}\label{eq:nh2-1}
A_{\Phi_N}= \max_{s\in\R}|\Phi_N(s)|+\max_{s\in\R}|\Phi_N''(s)|\leq CN^{1-\frac{1}{p}}
\end{equation}
\par
Let us consider now the family of test functions $\{ f_N \}$ given by
$$
f_N(z)=\frac{1}{N^{2-\frac1p}}\frac{1}{(1-a_Nz)^2},\quad z\in\D,\,\, N=1, 2, \dots
$$
An easy calculation using  \cite[Lemma, page 65]{D}) shows that the
$H^p$-norms of the functions $f_N$ are  uniformly bounded. By the
hypothesis
$$
\sup_{N}||\hg(f_N)||_{H^p}=C<\infty.
$$
This, together with part (iii) of Theorem~\ref{th:cesaro} and
(\ref{eq:nh2-1}), implies
\begin{equation}\begin{split}\label{eq:nh3}
||W_N^{\Phi_N}\ast \hg(f_N)||_{H^p} & \le C_p A_{\Phi_N} ||\hg(f_N)||_{H^p}\le C_p N^{1-\frac1p}.
\end{split}\end{equation}
\par On the other hand,
\begin{align*}
(W_N^{\Phi_N}\ast \hg(f_N))(z)&=\sum_{\frac{N}{2}\le k\le 4N}\left[(k+1)b_{k+1}\left(\int_0^1t^kf_N(t)\,dt\right)\Phi_N(\frac{k}{N})\right]z^k\\
&=\sum_{\frac{N}{2}\le k\le N-1}[\cdots]z^k+\sum_{N\le k\le 2N-1}[\cdots]z^k +\sum_{2N\le k\le 4N }[\cdots]z^k\\
&=F_1^N(z)+F_2^N(z)+F_3^N(z)
\end{align*}
and by (\ref{eq:nh1})
\begin{equation}\begin{split}\label{eq:nh4}
F_2^N(z)&=\sum_{N\le k\le
2N-1}(k+1)b_{k+1}\left(\int_0^1t^kf_N(t)\,dt\right)\Phi_N\left
(\frac{k}{N}\right )z^k
\\ & =\sum_{N\le k\le 2N-1}(k+1)b_{k+1}\psi_{N}\left (\frac{k}{N}\right )\varphi_N\left (\frac{k}{N}\right )z^k
\\ & =\sum_{N\le k\le 2N-1}(k+1)b_{k+1} z^k.
\end{split}\end{equation}
\par Using the M. Riesz projection theorem and (\ref{eq:nh3}) we have
$$
\|F_2^N\|_{H^p}\leq C_p||W_N^{\Phi_N}\ast \hg(f_N)||_{H^p}\leq  C_p N^{1-\frac1p},
$$
valid for each $N$. Finally observing that for $n\in \mathbb{N}$,
\begin{equation*}
\Delta_n g'(z) =\sum_{k=2^n}^{2^{n+1}-1}(k+1)b_{k+1} z^k= F_2^{2^n}(z),
\end{equation*}
we obtain
$$
||\Delta_n g'||_{H^p}\leq C_p2^{n(1-\frac{1}{p})},
$$
and using part (iv) of Theorem \ref{th:mlip}, we conclude    $g\in \Lambda\left(p,\frac{1}{p}\right)$.
\end{pf}
\par\bigskip
\begin{Pf}{\em{ the necessity statement in Theorem \ref{th:dirichlet}}}:
\par
The proof is similar to that
of Theorem~\ref{th:hardy.nec}, hence, we shall omit some details.
 Let $p$,  $\alpha$ and  $g(z)=\sum_{k=0}^\infty b_kz^k\in\hol(\D)$ be as in the
 statement and assume that $\hg:\Dpa\to \Dpa$ is bounded. We consider the
 functions $\psi_{N, \alpha}$ and $\phi_{N, \alpha}=\frac{1}{\psi_{N, \alpha}}$ defined in Lemma \ref{le:1}.
By part (iv) of Lemma \ref{le:1}, for each $N=1,2,\dots$, there is a
$C^\infty$-function $\Phi_{N,\a}:\R\to\C$ with
$\supp\left(\Phi_{N,\a}\right)\subset \left(\frac12, 4 \right)$ such
that
\begin{equation}\label{eq:nh1a}
\Phi_{N,\a}(s)=\varphi_{N,\a}\left(s+\frac{1}{N}\right),\quad 1\le s\le 2,
\end{equation}
and for each $m\in\N$ there exists $C(m)$ (independent of $N$) such that
\begin{equation}\label{eq:nh2a}|\Phi^{(m)}_{N,\a}(s)|\le C(m)N^{2-\frac{2+\a}{p}},\quad
s\in\R,\quad N=1,2,\dots.\end{equation}
\par Since  $\a<3p-2$, the family of test functions
\begin{equation}\label{eq:testfuncdpa}
f_N(z)=f_{N,\a}(z)=\frac{1}{N^{3-\frac{2+\a}{p}}}\frac{1}{(1-a_Nz)^2},\quad z\in\D,
\end{equation}
forms a bounded set in $\Dpa$ (see \cite[Lemma 3.10]{Zhu}), and the hypothesis implies that
$$
\sup_{N}||\hg(f_N)||_{\Dpa}<\infty.
$$
This, together with the easily checked  identity $\hg(f)'=\hgp(zf)$,
gives
 $$
 \sup_{N}||\hgp(zf_N)||_{A^p_\alpha}=C<\infty.
 $$
Then part (iv) of  Theorem~\ref{th:cesaro} and (\ref{eq:nh2a}) imply that
\begin{equation}
\begin{split}\label{eq:nh3a}
||W_N^{\Phi_{N,\a}}\ast \hgp(zf_N)||_{A^p_\alpha} &
\le C_p A_{\Phi_{N,\a}} ||\hgp(zf_N)||_{A^p_\alpha}\le C_p N^{2-\frac{2+\a}{p}}.
\end{split}
\end{equation}
Moreover,
\begin{align*}
(W_N^{\Phi_{N,\a}}\ast& \hgp(zf_N))(z)= \\
&=\sum_{\frac{N}{2}\leq k\leq
4N}(k+1)(k+2)b_{k+2}\left(\int_0^1t^{k+1}f_N(t)\,dt\right)
\Phi_{N,\alpha}\left (\frac{k}{N}\right)z^k
\end{align*}
and, by (\ref{eq:nh1a}),
\begin{equation}
\begin{split}\label{eq:nh4a}
\sum_{k=N}^{2N-1}(k+1)(k+2)b_{k+2}&\left(\int_0^1t^{k+1}f_N(t)\,dt\right)\Phi_{N,\alpha}\left
(\frac{k}{N}\right )z^k
\\ & =\sum_{k=N}^{2N-1}(k+1)(k+2)b_{k+2} z^k.
\end{split}
\end{equation}
\par Consequently, using (\ref{eq:nh4a}), the M. Riesz projection theorem, Lemma \ref{le:eqno}, and (\ref{eq:nh3a}),
and setting $N=2^n, \,\,n\in \mathbb{N}$,
\begin{equation*}
\begin{split}
||\Delta_n g''||_{H^p} &=\left\|\sum_{k=2^n}^{2^{n+1}-1}(k+1)(k+2)b_{k+2} z^k\right\|_{H^p}\\
&\le C_p\left\| \sum_{k=2^{n-1}}^{2^{n+2}}(k+1)(k+2)b_{k+2}\left(\int_0^1t^{k+1}f_{2^n}(t)\,dt\right)
\Phi_{2^n,\alpha}\left (\frac{k}{2^n}\right )z^k\right\|_{H^p}\\
& =  C_p\left\|W_{2^n}^{\Phi_{2^n, \a}}\ast\hgp(zf_{2^n})\right\|_{H^p}\\
& \le  C_p 2^{n(\frac{1+\alpha}{p})}\left\|W_{2^n}^{\Phi_{2^n, \a}}\ast\hgp(zf_{2^n})\right\|_{A^p_{\a}}\\
&\le C_p 2^{n(2-\frac1p)},
\end{split}
\end{equation*}
and by part (v) of  Theorem~\ref{th:mlip},  we deduce that $g\in \Lambda\left(p,\frac{1}{p}\right)$.
\end{Pf}
\par
We note that the proof we have just finished remains valid for $p>1$
and $\alpha<3p-2$. \par\medskip
\begin{Pf}{\em{ the necessity statement in Theorem \ref{th:bergman}}}:
\par
Let $p, \a$ be as in the statement and assume
$\hg:A^p_{\a}\to A^p_{\a}$ is bounded.
\par
Since $\a<p-2$, then $\a+p<3p-2$. This together with the fact
that $A^p_\a=\mathcal{D}^p_{p+\a}$ gives the assertion  as a consequence of the
preceding proof.
\end{Pf}
%%%%%%%%%%%%%%%%%%%%%%%%%%%%%%%%%%%%%%%%%%%%%%%%%%%%%%%%%%
\section{The sublinear Hilbert operator}\label{sublinear}
\par
Let us consider the following  space of analytic functions in $\D$
$$
A^1_{[0,1)}=\left\{f\in \hol(\D):\,\int_0^1|f(t)|\,dt<\infty\right\}.
$$
The well-known Fej\'er-Riesz inequality \cite{D} implies that
$H^1\subset A^1_{[0,1)}$. We remark also that an application of
H\"older's inequality yields
 \begin{equation}\label{eq:apaa1}
 A^p_\alpha\subset
 A^1_{[0,1)},\quad \text{if  $p>1$ and $-1<\alpha<p-2$,}
 \end{equation}
 an inclusion which is not longer true for $\alpha\ge p-2$.
\par
 Condition (\ref{eq:apaa1}) insures that $\hti$ is well defined on  $A^p_\alpha$
  for $p$ and $\alpha $ in that range of values.
 \par
 Now, we proceed to state some lemmas which will be needed for the proof Theorem~\ref{th:hti}.
 \begin{lemma}\label{le:hti1}
 \begin{itemize}
 \item[(i)] Assume that $0<p<\infty $. Then there exists a positive constant
 $C=C(p)$ such that
 $$\int_0^1M_\infty ^p(r,g)\,dr\le C\Vert g\Vert _
 {H^p}^p,\quad\text{for all $g\in \hol (\D )$}.$$
 \item[(ii)]
 Assume that $0<p<\infty$ and $\a>-1$. Then there exists a positive constant $C=C(p,\alpha)$ such that
 $$
 \int_0^1 M^p_\infty(r,f)(1-r)^{\a+1}\,dr\le C ||f||^p_{A^p_\alpha},\quad\text{for all $f\in \hol (\D
 )$}.
 $$
 \end{itemize}
 \end{lemma}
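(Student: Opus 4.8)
For part~(ii) the plan is a soft subharmonicity‑plus‑Fubini argument (this is the easier case, so I describe it first; part~(i) turns out to be its critical endpoint $\alpha=-1$, where this scheme breaks down). Since $f$ is analytic, $|f|^p$ is subharmonic, so the sub‑mean value inequality over the disc $D(z,\tfrac{1-|z|}2)\subset\D$ gives, for $|z|=r$ and a point $z_r$ on $|z|=r$ at which $M_\infty(r,f)$ is attained,
$$M_\infty^p(r,f)\le\frac{C}{(1-r)^2}\int_{D(z_r,(1-r)/2)}|f(w)|^p\,dA(w).$$
I would multiply by $(1-r)^{\alpha+1}$, integrate in $r$, and apply Tonelli's theorem, reducing matters to $\int_\D|f(w)|^p K(w)\,dA(w)$ with $K(w)=\int_{\{r:\,w\in D(z_r,(1-r)/2)\}}(1-r)^{\alpha-1}\,dr$. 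The membership $w\in D(z_r,(1-r)/2)$ forces $\big||w|-r\big|<(1-r)/2$, so $r$ ranges over an interval on which $1-r\asymp 1-|w|$ and whose length is $\asymp 1-|w|$; hence $K(w)\le C(\alpha)(1-|w|)^{\alpha}$ for every real $\alpha$. This yields $\int_0^1 M_\infty^p(r,f)(1-r)^{\alpha+1}\,dr\le C\int_\D|f|^p(1-|w|)^\alpha\,dA\asymp\n{f}_{A^p_\alpha}^p$. The only care needed is the radial localisation of the $r$‑interval and the harmless $\alpha$‑dependence of the constant; the weight $(1-r)^{\alpha+1}$ with $\alpha>-1$ is exactly what makes $K(w)$ finite.

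For part~(i) this computation would produce the weight $(1-|w|)^{-1}$, and $\int_\D|f|^p(1-|w|)^{-1}\,dA$ diverges for generic $f\in H^p$, so the elementary route genuinely fails. Instead I would pass to the boundary and use the nontangential maximal function $Ng(e^{i\theta})=\sup_{z\in\Gamma(e^{i\theta})}|g(z)|$ for a fixed aperture $\beta>1$, together with the classical maximal theorem $\n{Ng}_{L^p(\T)}\le C\n{g}_{H^p}$, which holds for all $0<p<\infty$. The bridge is a layer‑cake comparison of distribution functions. Since $r\mapsto M_\infty(r,g)$ is nondecreasing (maximum modulus principle), the set $E_\lambda=\{r\in(0,1):M_\infty(r,g)>\lambda\}$ is an interval $(r_\lambda,1)$. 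Choosing the single radius $r=\tfrac{1+r_\lambda}2\in E_\lambda$, there is $z_r$ with $|z_r|=r$ and $|g(z_r)|>\lambda$; the geometric fact that $\{\theta:z_r\in\Gamma(e^{i\theta})\}$ is an arc of length $\asymp 1-r\asymp|E_\lambda|$ on which $Ng>\lambda$ then gives $|E_\lambda|\le C\,|\{\theta:Ng(e^{i\theta})>\lambda\}|$.

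Feeding this into the layer‑cake formula gives the assertion:
$$\int_0^1 M_\infty^p(r,g)\,dr=p\int_0^\infty\lambda^{p-1}|E_\lambda|\,d\lambda\le Cp\int_0^\infty\lambda^{p-1}|\{Ng>\lambda\}|\,d\lambda=C\n{Ng}_{L^p(\T)}^p\le C'\n{g}_{H^p}^p.$$
The main obstacle — and the reason a naive Fubini or a crude weak‑type estimate is not enough — is exactly the endpoint character of~(i): the pointwise growth bound $M_\infty(r,g)\lesssim(1-r)^{-1/p}\n{g}_{H^p}$ only gives $|E_\lambda|\lesssim\lambda^{-p}\n{g}_{H^p}^p$, whence the layer‑cake integral diverges logarithmically. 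The decisive gain comes from replacing the single radius by an arc on $\T$ through the cone geometry and invoking the full strength of the nontangential maximal theorem: the monotonicity of $M_\infty$ lets me use one representative radius per level $\lambda$, which sidesteps entirely the overlap problem that defeats a direct summation over dyadic scales.
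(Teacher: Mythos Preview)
Your argument is correct but follows a different route from the paper's. For~(i), the paper simply cites Theorem~5.11 of \cite{D}, whereas you supply a self-contained proof via the nontangential maximal theorem and a layer-cake comparison of distribution functions; your monotonicity trick (one representative radius per level~$\lambda$) is a clean way to avoid any overlap issue. For~(ii) the two approaches diverge more substantially: the paper \emph{deduces}~(ii) from~(i) by applying~(i) to the dilation $z\mapsto f(sz)$, which gives $\int_0^s M_\infty^p(r,f)\,dr\le Cs\,M_p^p(s,f)$, and then multiplies by $(1-s)^\alpha$, integrates in~$s$, and applies Fubini. Your proof instead handles~(ii) first and independently, through the sub-mean-value inequality for $|f|^p$ and a direct kernel estimate. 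The paper's deduction is slicker once~(i) is available and makes~(ii) visibly a weighted consequence of~(i); your route, on the other hand, makes~(ii) entirely elementary (no maximal theorem needed) and explains transparently why the weight exponent $\alpha+1>0$ is exactly what closes the Fubini, with~(i) appearing as the genuine endpoint that forces the harder tool. One small wrinkle in your part~(ii): the maximising point $z_r$ need not depend measurably on~$r$, but this evaporates if you replace the disc $D(z_r,(1-r)/2)$ by the annulus $\{w:\big||w|-r\big|<(1-r)/2\}$ containing it; your kernel bound then goes through verbatim.
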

 \begin{pf}
 Part (i) follows taking $q=\infty $ and $\lambda =p$ in
 Theorem\,\@5.\,\@11 of \cite{D}.
 \par Now we proceed to prove part\,\@(ii).
Applying (i) to $g(z)=f(sz)$ ($0<s<1$) and making a change of
variables, we obtain
 $$
 \int_0^s M^p_\infty(r,f)\,dr \le Cs M_p^p(s,f)\quad 0\le r<1,
 $$
 Multiplying both sides of the last inequality by $(1-s)^\alpha $,
 integrating the resulting inequality, and applying Fubini's theorem
 yieds
\begin{equation*}
\int_0^1 M^p_\infty(r,f)(1-r)^{\a+1}\,dr=C\int_0^1(1-s)^\alpha
\int_0^sM_\infty ^p(r,f)\,dr\,ds\,\le \, C\Vert f\Vert _{A^p_\alpha
}^p.\end{equation*}
\end{pf}
 \begin{lemma}\label{le:hti2}
 Assume that $1<p<\infty$ and $p-2<\alpha$. Then there exists a constant $C=C(p,\alpha)$
 such that for any $f\in\hol(\D)$
 $$
 \int_0^1 M^p_\infty(r,f)(1-r)^{\a-p+1}\,dr\le C ||f||^p_{\Dpa}.
 $$
 \end{lemma}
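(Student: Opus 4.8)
The plan is to reduce the estimate to Lemma~\ref{le:hti1}(ii) applied to $f'$, by combining a pointwise radial bound with a one-dimensional weighted Hardy inequality. Set $\beta=\alpha-p+1$. The hypothesis $p-2<\alpha$ says exactly that $\beta>-1$, and since $p>1$ we also have $\alpha>p-2>-1$, so that Lemma~\ref{le:hti1}(ii) is applicable with the given $\alpha$ to the function $f'$.

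First I would establish the pointwise bound
$$
M_\infty(r,f)\le |f(0)|+\int_0^r M_\infty(s,f')\,ds,\qquad 0<r<1.
$$
This follows by integrating along the radius: for $z=re^{i\theta}$ one has $f(re^{i\theta})-f(0)=\int_0^r e^{i\theta}f'(se^{i\theta})\,ds$, hence $|f(re^{i\theta})|\le |f(0)|+\int_0^r M_\infty(s,f')\,ds$, and taking the supremum over $\theta$ gives the claim. Using the inequality $(a+b)^p\le 2^{p-1}(a^p+b^p)$ together with $\beta>-1$, the contribution of the $|f(0)|$ term integrates to a finite multiple of $|f(0)|^p$, so it remains to control $\int_0^1\Phi(r)^p(1-r)^{\beta}\,dr$, where $\Phi(r)=\int_0^r M_\infty(s,f')\,ds$.

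The heart of the matter is the weighted Hardy inequality
$$
\int_0^1\Phi(r)^p(1-r)^{\beta}\,dr\le C\int_0^1 M_\infty(r,f')^p(1-r)^{\beta+p}\,dr,
$$
where I note that $\beta+p=\alpha+1$. I plan to prove it by integration by parts on $[0,R]$ with $R<1$, taking $u=\Phi^p$ and $dv=(1-r)^{\beta}\,dr$, so that $v=-\frac{(1-r)^{\beta+1}}{\beta+1}$ (here $\beta+1=\alpha-p+2>0$ is used). The boundary term at $r=0$ vanishes because $\Phi(0)=0$, while the boundary term at $R$ has a favorable sign and may be discarded, leaving
$$
\int_0^R\Phi^p(1-r)^{\beta}\,dr\le \frac{p}{\beta+1}\int_0^R (1-r)^{\beta+1}\Phi^{p-1}M_\infty(r,f')\,dr.
$$
Splitting $(1-r)^{\beta+1}=(1-r)^{\beta(p-1)/p}(1-r)^{(\beta+p)/p}$ and applying H\"older with exponents $p/(p-1)$ and $p$ produces precisely $\bigl(\int_0^R\Phi^p(1-r)^{\beta}\bigr)^{(p-1)/p}$ times $\bigl(\int_0^R M_\infty(r,f')^p(1-r)^{\beta+p}\bigr)^{1/p}$. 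Since $\int_0^R\Phi^p(1-r)^{\beta}\,dr$ is finite for each fixed $R<1$, I can cancel this factor and let $R\to 1^-$ by monotone convergence, obtaining the inequality with $C=(p/(\beta+1))^p$.

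Finally, since $\beta+p=\alpha+1$, Lemma~\ref{le:hti1}(ii) applied to $f'$ gives $\int_0^1 M_\infty(r,f')^p(1-r)^{\alpha+1}\,dr\le C\|f'\|_{A^p_\alpha}^p$, and assembling the three steps yields
$$
\int_0^1 M^p_\infty(r,f)(1-r)^{\alpha-p+1}\,dr\le C\bigl(|f(0)|^p+\|f'\|_{A^p_\alpha}^p\bigr)=C\|f\|_{\Dpa}^p.
$$
The main obstacle is the Hardy inequality; the delicate point there is the a priori finiteness needed to cancel the $\Phi$-integral, which I resolve by truncating at $R<1$ and exploiting the favorable sign of the discarded boundary term before passing to the limit.
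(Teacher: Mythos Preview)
Your proof is correct and follows essentially the same route as the paper: the pointwise bound $M_\infty(r,f)\le |f(0)|+\int_0^r M_\infty(s,f')\,ds$, a weighted Hardy inequality to pass from $\Phi^p(1-r)^{\alpha-p+1}$ to $M_\infty^p(r,f')(1-r)^{\alpha+1}$, and finally Lemma~\ref{le:hti1}(ii) applied to $f'$. The only difference is cosmetic: the paper quotes the classical Hardy inequality from Hardy--Littlewood--P\'olya (after the change of variable $x=1-r$) and obtains the same constant $(p/(\alpha-p+2))^p$, whereas you reprove that inequality in situ via integration by parts and H\"older.
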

 \begin{proof}
 The identity $f(z)=f(0)+\int_0^zf'(\zeta)\,d\zeta$, $z\in \D$,
 gives
 $$
 M_{\infty}^p(r, f)\leq C\left(|f(0)|^p+\left(\int_0^r M_{\infty}(t,  f')\,dt\right)^p\right),
 $$
 for some constant $C$. Since $\a-p+1>-1$ we have
\begin{align*}
 \int_0^1 M^p_\infty&(r,f) (1-r)^{\a-p+1}\,dr\\
&  \le C |f(0)|^p+C\int_0^1\left(\int_0^r M_\infty(t,f')\,dt\right)^p (1-r)^{\a-p+1}\,dr\\
& =  C |f(0)|^p+ C\int_0^1\left(\int_{1-r}^1 M_\infty(1-s,f')\,ds\right)^p (1-r)^{\a-p+1}\,dr.
\end{align*}
We now  use  the following  version of the  classical Hardy inequality
 \cite[p.~244-245]{HLP-88}: If $k>0$, $q>1$ and $h$ is a nonnegative
 function defined in $(0,\infty )$ then
 $$
 \int_0^\infty\left(\int_x^\infty h(t)dt\right)^q x^{k-1} dx
 \leq \left(\frac{q}{k}\right)^q\int_0^\infty h(x)^q x^{q+k-1}dx.
 $$
Taking $h\equiv 0$ in $[1,\infty)$, and making the change of variable $x=1-r$ in each side, the
inequality takes the form
\begin{equation}\label{eq:hardy}
\int_0^1\left(\int_{1-r}^1 h(t)\, dt \right)^q (1-r)^{k-1}\,dr
\leq  \left(\frac{q}{k}\right)^q \int_0^1  (h(1-r))^q (1-r)^{q+k-1}\,dr.
\end{equation}
Now apply this inequality to the function $h(s)=M_{\infty}(1-s, f')$ with $k=\a-p+2>0$ to obtain
\begin{align*}
\int_0^1  \left(\int_{1-r}^1 M_\infty(1-s,f')\,ds \right)^p&  (1-r)^{\alpha-p+1}\,dr\\
 &\le C \int_0^1 M^p_\infty(r,f') \,(1-r)^{\a+1}\,dr.
\end{align*}
Putting together the above and using  Lemma \ref{le:hti1} we find,
\begin{align*}
 \int_0^1 M^p_\infty(r,f) (1-r)^{\a-p+1}\,dr&
 \le C \left(|f(0)|^p+\int_0^1 M^p_\infty(r,f') \,(1-r)^{\a+1}\,dr,\right)\\
&\le C (|f(0)|^p+||f'||^p_{A^p_{\a}})\\
&=C ||f||^p_{\Dpa},
\end{align*}
and the proof is complete.
\end{proof}
\par
The first part of the following Lemma is a special case of
\cite[Theorem $2.1$]{LueInd85}, and the second part is an immediate
consequence of the first part.
\begin{lemma}\label{dual-Berg}
(i) If  $1<p<\infty$ and $\a>-1$, then  the dual of $A^p_\alpha $ can
be identified with  $A^{q}_{\beta}$ where $\frac{1}{p}+\frac{1}{q}=1$ and $\b$ is any number with $\b>-1$, under the pairing
\begin{equation}\label{pair-Berg}
\langle f, g\rangle_{A_{p,\alpha ,\beta }}=\int_\D
f(z)\overline{g(z)}(1-|z|^2)^{\frac{\a}{p}+\frac{\b}{q}}\,dA(z).
\end{equation}
(ii) If  $1<p<\infty$ and $\a>-1$, then  the dual of $\Dpa$ can
be identified with  $\Dqb$ where $\frac{1}{p}+\frac{1}{q}=1$ and $\b$ is any number with $\b>-1$, under the pairing
\begin{equation}\label{pair-Dir}
\langle f, g\rangle_{\mathcal {D}_{p,\alpha ,\beta
}}=f(0)\overline{g(0)}+\int_\D
f'(z)\overline{g'(z)}(1-|z|^2)^{\frac{\a}{p}+\frac{\b}{q}}\,dA(z).
\end{equation}
\end{lemma}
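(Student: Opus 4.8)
The plan is to regard part~(i) as essentially Luecking's duality theorem and to deduce part~(ii) from it by a direct-sum argument built on differentiation. For part~(i) the first thing I would verify is that the proposed pairing is well defined and bounded. Splitting the weight as $(1-|z|^2)^{\frac{\a}{p}+\frac{\b}{q}}=(1-|z|^2)^{\frac{\a}{p}}\,(1-|z|^2)^{\frac{\b}{q}}$ and applying H\"older's inequality with exponents $p$ and $q$ gives a constant $C=C(p,\a,\b)$ with
$$
\left|\langle f, g\rangle_{A_{p,\alpha ,\beta }}\right|\le C\,\n{f}_{A^p_\a}\,\n{g}_{A^q_\b},
$$
so each $g\in A^q_\b$ induces a bounded functional on $A^p_\a$, and this is where the freedom in $\b$ enters. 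The substance of part~(i) is the converse: every bounded functional on $A^p_\a$ is represented in this form by some $g\in A^q_\b$ with comparable norm, for any admissible $\b>-1$. This is precisely the content of Luecking's Theorem~2.1; its mechanism is the $L^p$-boundedness of the weighted Bergman projection, which lets one extend a functional from the closed subspace $A^p_\a\subset L^p\!\left((1-|z|^2)^\a\,dA\right)$ to all of $L^p$, represent it by an $L^q$ density, and then project back to produce an analytic representative $g\in A^q_\b$. I would simply invoke this result.

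For part~(ii) the idea is that differentiation identifies $\Dpa$ with a direct sum to which part~(i) applies. Consider $T\colon \Dpa\to\C\oplus A^p_\a$ defined by $Tf=(f(0),f')$. This map is a bijection, with inverse $(c,h)\mapsto c+\int_0^z h(\zeta)\,d\zeta$, and by the very definition $\n{f}_{\Dpa}^p=|f(0)|^p+\n{f'}_{A^p_\a}^p$ it is an isometry onto $\C\oplus A^p_\a$ carrying the norm $|c|^p+\n{h}_{A^p_\a}^p$. Consequently $(\Dpa)^*$ is isometrically the dual of this $\ell^p$-type direct sum, namely the $\ell^q$-type direct sum $\C\oplus (A^p_\a)^*$. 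Writing $(A^p_\a)^*=A^q_\b$ via part~(i) and reading the identification backwards, the functional attached to $g\in\Dqb$, i.e.\ to the pair $(g(0),g')\in\C\oplus A^q_\b$, acts on $f\in\Dpa$ by
$$
f(0)\overline{g(0)}+\langle f', g'\rangle_{A_{p,\alpha ,\beta }}
= f(0)\overline{g(0)}+\int_\D f'(z)\overline{g'(z)}(1-|z|^2)^{\frac{\a}{p}+\frac{\b}{q}}\,dA(z),
$$
which is exactly $\langle f, g\rangle_{\mathcal {D}_{p,\alpha ,\beta }}$.

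I do not expect a genuine obstacle here; the difficulty is entirely in part~(i), which I am citing. The only points requiring care in part~(ii) are the surjectivity of the differentiation map onto $A^p_\a$ (so that $T$ is truly onto and the direct-sum picture is exact) and the routine compatibility of the standard $\ell^p$--$\ell^q$ direct-sum duality with the chosen pairings. Once part~(i) is granted with its arbitrary parameter $\b$, part~(ii) follows formally.
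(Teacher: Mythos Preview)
Your proposal is correct and matches the paper's approach: the paper states that part~(i) is a special case of Luecking's Theorem~2.1 and that part~(ii) ``is an immediate consequence of the first part,'' which is exactly the direct-sum/differentiation reduction you spell out. You have simply made explicit the mechanism behind the word ``immediate.''
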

 \begin{Pf}{\em{ Theorem~\ref{th:hti}.}}
 \par (i)\, Recall that for $1<p<\infty$, the dual of $H^{p}$ can be identified with  $H^{q}$, $\frac{1}{p}+\frac{1}{q}=1$, under the $H^2$-pairing,
 $$
 \langle f, h \rangle_{H^2}=\lim_{r\to 1}\frac{1}{2\pi}\int_{0}^{2\pi}f(re^{i\theta})\overline{h(re^{i\theta})}\,d\theta,
 $$
 thus  it is enough to prove that there
 exists a constant $C>0$ such that
 \begin{equation}\label{eq:htihp1}
 \left|\lim_{r\to 1^-}\frac{1}{2\pi}\int_{0}^{2\pi}\hti(f)(re^{i\theta})\overline{h(re^{i\theta})}\,d\theta
 \right|\le C||f||_{H^{p}}||h||_{H^{q}}
 \end{equation}
  for any $f\in H^p$ and $g\in H^{q}$. Now, by Fubini's theorem
 \begin{align*}
  \frac{1}{2\pi}\int_{0}^{2\pi}\hti(f)(re^{i\theta})\overline{h(re^{i\theta})}\,d\theta=
  &
  \int_0^1|f(t)|\overline{\left(\frac{1}{2\pi}\int_0^{2\pi}\frac{h(re^{i\theta})}{1-tre^{-i\theta}}\,d\theta\right)}\,dt\\
  = &\int_0^1|f(t)| \overline{h(r^2t)} \,dt.
  \end{align*}
 Using H\"older's inequality and the Fej\'er-Riesz inequality we have
  \begin{equation*}
  \begin{split}
  \left|\frac{1}{2\pi}\int_{0}^{2\pi}\hti(f)(re^{i\theta})\overline{h(re^{i\theta})}\,d\theta\right|
 &\le \left(\int_0^1 |f(t)|^p\,dt\right)^{1/p}\left(\int_0^1 |h(r^2t)|^{q}\,dt\right)^{1/q}
 \\ & \le C ||f||_{H^{p}} M_{q}(r^2,h)
 \\ & \le   C||f||_{H^{p}}||h||_{H^{q}},
  \end{split}
  \end{equation*}
  which implies (\ref{eq:htihp1}) and finishes the proof of (i).
 \par \vspace{1em}
(ii) Using Lemma \ref{dual-Berg} we can choose
$$
\b= \frac{-\a q}{p}=\frac{-\a}{p-1}
$$
so that  the weight in the  pairing (\ref{pair-Berg}) is identically equal to 1, and we have
for  $f\in A^p_\alpha$ and $h\in A^{q}_{\b}$,
\begin{equation}
\begin{split}\label{eq:htiapa1}
\langle \hti(f), h\rangle_{A_{p,\alpha ,\beta }}&=
 \int_\D \hti(f)(z)\overline{h(z)}\,dA(z) \\
 & =  \int_\D \left(\int_0^1\frac{|f(t)|}{1-tz}\,dt\right)\overline{h(z)}\,dA(z)\\
 & =\int_0^1|f(t)| \overline{\left(\int_\D\frac{h(z)}{1-t\bar{z}}\,dA(z)\right)}\,dt\\
 & =2 \int_0^1|f(t)| \left(\int_0^1\overline{h(r^2t)}r\,dr\right)\,dt,
\end{split}
\end{equation}
so that
\begin{equation}\label{htiapa2}
|\langle \hti(f), h\rangle_{A_{p,\alpha ,\beta }}|\leq 2
\int_0^1|f(t)| G(t)\,dt,
\end{equation}
where  $G(t)=\int_0^1|h(r^2t)|r\,dr$.
Using H\"older's inequality we obtain,
\begin{equation*}
\begin{split}\label{htiapa10}
\int_0^1|f(t)| G(t)\,dt&=\int_0^1|f(t)|(1-t)^{\frac{\a+1}{p}} G(t)(1-t)^{-\frac{\a+1}{p}}\,dt\\
 &\mspace{-80mu} \le \left(\int_0^1|f(t)|^p(1-t)^{\a+1}\,dt\right)^{1/p}
 \left(\int_0^1|G(t)|^{q}(1-t)^{-\frac{q(\a+1)}{p}}\,dt\right)^{1/q}\\
 & \mspace{-80mu}\le\left(\int_0^1M_{\infty}^p(t, f)(1-t)^{\a+1}\,dt\right)^{1/p}\left(\int_0^1|G(t)|^{q}(1-t)^{-\frac{q(\a+1)}{p}}\,dt\right)^{1/q}\\
 &\mspace{-80mu}\leq
C\n{f}_{A^p_\a} \left(\int_0^1|G(t)|^q (1-t)^{-\frac{q(\alpha+1)}{p}}\,dt\right)^{1/q},
\end{split}
\end{equation*}
where in the last step we have used Lemma \ref{le:hti1}. Next we show that
\begin{equation}\label{eeee}
 \int_0^1|G(t)|^q(1-t)^{-\frac{q(\alpha+1)}{p}}\,dt\le C ||h||^q_{A^{q}_{\b}}.
\end{equation}
This together with (\ref{htiapa2}) will finish the proof. To show
(\ref{eeee}) observe first that if  $0<t<1/2$ then  $|h(r^2t)|\leq
M_{\infty}\left (\frac{1}{2}, h\right )$ for each $0<r<1$, thus
$$
\int_0^1|h(r^2t)|rdr\leq M_{\infty}\left(\frac{1}{2}, h\right ),
\quad 0<t<1/2,
$$
and we have
\begin{align*}%\label{eq:htiapa2}
\int_0^{1/2}|G(t)|^q (1-t)^{-\frac{q(\a+1)}{p}}\,dt &=
\int_0^{1/2}\left(\int_0^1|h(r^2t)|rdr\right)^{q}(1-t)^{-\frac{q(\alpha+1)}{p}}\,dt\\
&\le C M^{q}_\infty(\frac{1}{2}, h)\\
&\le C  ||h||^{q}_{A^{q}_{\b}}.
\end{align*}
On the other hand,
$$
-\frac{q(\a+1)}{p}=\frac{-\a+p-2}{p-1}-1>-1,
$$
and making a change of variable we obtain $\int_0^1|h(r^2t)|rdr=
\frac{1}{2t}\int_0^t|h(s)|ds$ so,
\begin{equation*}
\begin{split}
\int_{1/2}^1|G(t)|^p(1-t)^{-\frac{q(\a+1)}{p}}\,dt &=
\int_{1/2}^1\left(\int_0^1|h(r^2t)|rdr\right)^{q}(1-t)^{-\frac{q(\alpha+1)}{p}}\,dt\\
& \mspace{-40mu}= \int_{1/2}^1\frac{1}{(2t)^q}\left(\int_0^t|h(s)|ds\right)^q(1-t)^{-\frac{q(\alpha+1)}{p}}\,dt\\
&\mspace{-40mu}\leq \int_{1/2}^1 \left(\int_0^t M_\infty(s,h)\,ds\right)^{q}(1-t)^{-\frac{q(\alpha+1)}{p}}\,dt\\
&\mspace{-40mu} \le  \int_{0}^1\left(\int_{1-t}^1 M_\infty(1-s,h)\,ds\right)^{q}(1-t)^{-\frac{q(\a+1)}{p}}\,dt\\
&\mspace{-40mu} \le C \int_{0}^1 M^{q}_\infty(t, h) (1-t)^{-\frac{q\a}{p}+1} \,dt\quad (\mbox{by}\,\, (\ref{eq:hardy})) \\
& \mspace{-40mu} \le C  ||h||^{q}_{A^{q}_{\b}},
\end{split}
\end{equation*}
where we have used  Lemma \ref{le:hti1} in the last step. Thus (\ref{eeee}) is proved and the
proof of (ii) is complete.
\par
\vspace{1em} (iii)\, {\bf{Case $\mathbf{\a=p-1}$}}. By Lemma
\ref{dual-Berg}  the dual of  $\Dp$ can be identified with $\Dq$,
$\frac{1}{p}+\frac{1}{q}=1$, taking $\alpha =p-1$ and $\beta =q-1$
in the relevant pairing in (\ref{pair-Dir}), so that the weight
becomes $(1-|z|^2)$. Thus for $f\in \Dp$ and $h\in \Dq$ we have by
Fubini's theorem
\begin{align*}
\langle \hti(f), h\rangle_{\mathcal{D}_{p,p-1,q-1}} &= \hti(f)(0)\overline{h(0)}+\int_\D \hti(f)'(z)\overline{h'(z)}\, (1-|z|^2)\,dA(z)\\
&= \hti(f)(0)\overline{h(0)}+\int_0^1|f(t)|\left(\int_{\D}\frac{t\overline{h'(z)}}{(1-tz)^2}(1-|z|^2)
dA(z)\right)dt,
\end{align*}
and  a routine calculation gives
$$
\int_{\D}\frac{t\overline{h'(z)}}{(1-tz)^2}(1-|z|^2)
dA(z)= \overline{h(t)}-\int_0^1\overline{h(rt)}\,dr.
$$
Now
$$
\left|\overline{h(t)}-\int_0^1\overline{h(rt)}\,dr\right|\leq 2M_{\infty}(t, h)
$$
therefore,
\begin{align*}
\left|\int_\D \hti(f)'(z)\overline{h'(z)}\, (1-|z|^2)\,dA(z)\right|&\leq
\int_0^1|f(t)|\left|\overline{h(t)}-\int_0^1\overline{h(rt)}\,dr\right|\,dt\\
&\mspace{-100mu} \leq 2\int_0^1 M_{\infty}(t, f)M_{\infty}(t, h)\,dt\\
&\mspace{-100mu}\leq 2\left(\int_0^1 M_{\infty}^p(t, f)\,dt\right)^{1/p} \left(\int_0^1 M_{\infty}^q(t, h)\,dt\right)^{1/q}\\
&\mspace{-100mu}\leq C\n{f}_{\Dp}\n{h}_{\Dq}
\end{align*}
where for the last inequality we have used Lemma \ref{le:hti2} twice with $\a=p-1$ and $\a=q-1$ in the two integrals respectively.
Moreover $|h(0)|\leq \n{h}_{\Dq}$ and
$$
|\hti(f)(0)|= \int_0^1|f(t)|\,dt  \leq \left(\int_0^1 M_{\infty}^p(t, f)\,dt\right)^{1/p}\leq
C\n{f}_{\Dp},
$$
and combining the above we obtain
$$
|\langle \hti(f), h\rangle_{\mathcal{D}_{p,p-1,q-1}}|\leq
C\n{f}_{\Dp}\n{h}_{\Dq}
$$
which completes the proof of this case.
 \par\vspace{1em} {\bf{Case
$\mathbf{p-2<\alpha<p-1}$}}. In this case the dual of $\Dpa$ can be
identified with $\mathcal{D}^{q}_{\b} $ with $\b=\frac{-\a q}{p}$.
The weight in the  pairing (\ref{pair-Dir}) is then identically
equal to $1$. Thus for $f\in \Dpa$ and $h\in \mathcal{D}^{q}_{\b}$
we have
$$
\langle \hti(f), h\rangle_{\mathcal{D}_{p,\alpha ,\beta
}}=\hti(f)(0)\overline{h(0)}+\int_\D
\hti(f)'(z)\overline{h'(z)}\,dA(z).
$$
 Now using Fubini's theorem and the reproducing formula
 $$
 h'(a)=\int_{\D}\frac{h'(z)}{(1-a\bar{z})^2}\,dA(z), \quad a\in \D,\,\, h\in \D^q_{\b},
 $$
 we find
\begin{align*}
\int_\D \hti(f)'(z)\overline{h'(z)}\,dA(z)&
=\int_0^1t|f(t)|\left(\int_{\D}\frac{\overline{h'(z)}}{(1-tz)^2}\,dA(z)\right)\,dt\\
&=\int_0^1t|f(t)|\overline{h'(t)}\,dt.
\end{align*}
We set  $s=-1+\frac{\a+1}{p}$ and use  H\"older's inequality to obtain
\begin{equation*}
\begin{split}
\left|\int_0^1t|f(t)|\overline{h'(t)}\,dt\right|&=
\left|\int_0^1t|f(t)|(1-t)^s\overline{h'(t)}(1-t)^{-s}\,dt\right|\\
&\mspace{-60mu}\leq \left(\int_0^1|f(t)|^p(1-t)^{ps}\,dt\right)^{\frac{1}{p}}
 \left(\int_0^1|h'(t)|^{q}(1-t)^{-qs}\,dt\right)^{\frac{1}{q}}.
\end{split}
\end{equation*}
By Lemma \ref{le:hti2} the first integral above is
\begin{equation*}
\begin{split}
\int_0^1|f(t)|^p(1-t)^{ps}\,dt&= \int_0^1|f(t)|^p(1-t)^{\a-p+1}\,dt\\
&\leq \int_0^1M_{\infty}^p(f, t)(1-t)^{\a-p+1}\,dt\\
&\leq C\n{f}_{\Dpa}^p ,
\end{split}
\end{equation*}
while the second integral by Lemma \ref{le:hti1} is
\begin{equation*}
\begin{split}
\int_0^1|h'(t)|^{q}(1-t)^{-qs}\,dt&=\int_0^1|h'(t)|^{q}(1-t)^{1-\frac{\a q}{p}}\,dt\\
&=\int_0^1|h'(t)|^{q}(1-t)^{\b+1}\,dt\\
&\leq \int_0^1M_{\infty}^q(h', t)(1-t)^{\b+1}\,dt\\
& \leq C\n{h'}_{A_{\b}^q}^q\\
&\leq C\n{h}_{\mathcal D_{\b}^q}^q .
\end{split}
\end{equation*}
Thus
$$
\left|\int_\D \hti(f)'(z)\overline{g'(z)}\,dA(z)\right|\leq
C\n{f}_{\Dpa} \n{g}_{\mathcal D_{\b}^q}.
$$
This together with the inequalities $|h(0)|\leq \n{h}_{\mathcal
D_{\b}^q}$ and
$$
|\hti(f)(0)|=\int_0^1|f(t)|\,dt\leq C\n{f}_{\Dpa}
$$
imply that
$$
|\langle \hti(f), h\rangle_{\mathcal{D}_{p,\alpha ,\beta }}|\leq
C\n{f}_{\Dpa}\n{h}_{\mathcal D^q_{\b}}
$$
and the proof is complete.
\end{Pf}
%%%%%%%%%%%%%%%%%%%%%%%%%%%%%%%%%%%%%%%%%%%%%%%%%%%%%%%%%%%%%%
\section{Sufficient conditions}
\par
In this section we will prove the sufficient conditions for Theorems
1, 2(ii),  3, and 4. In order to do that we state first some needed
results.
\par
A nice result of Hardy-Littlewood \cite[Section $6.2$]{D} \cite[Theorem $7.5.1$]{Pabook} asserts that if
$0<p\le 2$  and $f(z)=\sum_{k=0}^\infty a_k z^k\in H^p$,\, then
 \begin{equation}\label{eq:hl1}
 K_p(f)=\sum_{k=0}^\infty (k+1)^{p-2}|a_k|^p\le C_p ||f||^p_{H^p}.
 \end{equation}
On the other hand, if $2\le p<\infty $ and $f(z)=\sum_{k=0}^\infty
a_k z^k\in \hol(D)$ satisfies that $K_p(f)<\infty$, then $f\in H^p$
and
 \begin{equation}\label{eq:hl2}
 ||f||^p_{H^p}\le C_p K_p(f).
 \end{equation}
The converse of each of these two statements is not true for a
general power series $f(z)=\sum_{k=0}^\infty a_k z^k\in \hol(D)$ and
for arbitrary indices $p\ne 2$. If however we restrict to the class
of power series with non-negative decreasing  coefficients then we
have the following result (see \cite{HL-31}, \cite[Chapter XII,
Lemma $6.6$]{Zy}, \cite[$7.5.9$]{Pabook} and \cite{Padec}).
 \begin{other}\label{th:decreasing}
 Assume that $1\le p<\infty$  and  $f(z)=\sum_{k=0}^\infty a_k z^k\in \hol(\D)$ where $\{a_n\}$ is a
 sequence of positive numbers which decreases to zero.
 Then the following assertions are equivalent:
 \par (i)\, $f\in H^p$.
 \par (ii)\, $f\in \Dp$.
 \par (iii)\, $K_p(f)<\infty$.
 \par Furthermore,
 $$
 ||f||^p_{H^p}\asymp ||f||_{\Dp}^p\asymp K_p(f).
 $$
 \end{other}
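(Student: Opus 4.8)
The plan is to establish the two-sided comparison $\n{f}_{H^p}^p\asymp K_p(f)\asymp\n{f}_{\Dp}^p$, from which all three equivalences follow at once; the hypothesis $a_n\downarrow 0$ will be needed only to supply, in each range of $p$, the single estimate that fails for a general power series. First I would record the ``free'' halves. By (\ref{eq:hl1}) one has $K_p(f)\lesssim\n{f}_{H^p}^p$ for $1\le p\le 2$, and by (\ref{eq:hl2}) one has $\n{f}_{H^p}^p\lesssim K_p(f)$ for $2\le p<\infty$. Combining these with the inclusions recalled in the introduction ($\Dp\subset H^p$ for $p\le 2$ and $H^p\subset\Dp$ for $p\ge 2$) gives, still for arbitrary $f$, that $K_p(f)\lesssim\n{f}_{\Dp}^p$ for $p\le 2$ and $\n{f}_{\Dp}^p\lesssim K_p(f)$ for $p\ge 2$. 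Thus in every case only one reverse inequality remains, and it is exactly there that monotonicity enters.

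For the Hardy space I would prove $\n{f}_{H^p}^p\asymp K_p(f)$ directly, exploiting that $a_n\ge 0$ forces $M_\infty(r,f)=f(r)=\sum a_k r^k$, together with the dyadic reduction $K_p(f)\asymp\sum_n 2^{n(p-1)}a_{2^n}^p$ (immediate from $(k+1)^{p-2}\asymp 2^{n(p-2)}$ on $I(n)$). For the lower bound, monotonicity yields the pointwise estimate $f(r)\gtrsim N a_N$ with $N=[1/(1-r)]$, and inserting this into Lemma~\ref{le:hti1}(i), i.e. $\int_0^1 f(r)^p\,dr\le C\n{f}_{H^p}^p$, gives $K_p(f)\lesssim\n{f}_{H^p}^p$ after a dyadic change of variable. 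For the upper bound I would use summation by parts: since $\{a_k\}$ decreases, $|f(e^{i\theta})|\lesssim\sum_{k\le 1/|\theta|}a_k$ for $0<|\theta|\le\pi$, whence $\n{f}_{H^p}^p\lesssim\int_0^\pi(\sum_{k\le 1/\theta}a_k)^p\,d\theta\asymp\sum_n 2^{-n}A_{2^n}^p$ with $A_N=\sum_{k\le N}a_k$; a discrete Hardy inequality with the geometric weights $2^{-n}$ (whose Muckenhoupt constant is finite for every $p\ge 1$) turns this into $\sum_n 2^{n(p-1)}a_{2^n}^p\asymp K_p(f)$. This proves (i)$\Leftrightarrow$(iii) with $\n{f}_{H^p}^p\asymp K_p(f)$; applied blockwise it also gives $\n{\Delta_n f}_{H^p}^p\asymp\sum_{k\in I(n)}(k+1)^{p-2}a_k^p$ and hence $\sum_n\n{\Delta_n f}_{H^p}^p\asymp K_p(f)$.

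The remaining and most delicate point is $\n{f}_{\Dp}^p\asymp K_p(f)$. Here the naive idea of repeating the Hardy argument for $f'$ breaks down, since the coefficients $(k+1)a_{k+1}$ of $f'$ need not be monotone and the analogue of the concentration estimate $M_p(r,f')\asymp(1-r)^{1/p}f'(r)$ fails for slowly varying $\{a_k\}$ (e.g. $a_k=1/k^2$, for which $f'\asymp\log\frac{1}{1-z}$). My way around this is to transfer block by block to the Hardy scale. For $\Delta_n(f')$, whose spectrum lies in $[2^n,2^{n+1})$, Lemma~\ref{le:eqno} with $\alpha=p-1$ gives $\n{\Delta_n(f')}_{A^p_{p-1}}^p\asymp 2^{-np}\n{\Delta_n(f')}_{H^p}^p$, and a Bernstein-type comparison (differentiation multiplies $H^p$-norms by $\asymp 2^n$ on a dyadic block, and multiplication by $z^{\pm1}$ is isometric) gives $\n{\Delta_n(f')}_{H^p}^p\asymp 2^{np}\n{\Delta_n f}_{H^p}^p$; thus blockwise $\n{\Delta_n(f')}_{A^p_{p-1}}^p\asymp\n{\Delta_n f}_{H^p}^p$, and summing, $\sum_n\n{\Delta_n(f')}_{A^p_{p-1}}^p\asymp K_p(f)$ by the preceding paragraph. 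It then remains to compare $\n{f'}_{A^p_{p-1}}^p$ with the sum of its blocks, and for this I would use the one-sided Littlewood--Paley inequalities for $A^p_{p-1}$ in the favourable direction of each regime, namely $\n{f'}_{A^p_{p-1}}^p\lesssim\sum_n\n{\Delta_n(f')}_{A^p_{p-1}}^p$ for $p\le 2$ and $\n{f'}_{A^p_{p-1}}^p\gtrsim\sum_n\n{\Delta_n(f')}_{A^p_{p-1}}^p$ for $p\ge 2$; these follow from the smooth-projection bounds of Theorem~\ref{th:cesaro} together with the type/cotype of $L^p$. Combined with the free inequalities of the first paragraph, this supplies the missing reverse estimate in each range and closes the loop.

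I expect the main obstacle to be precisely this passage from the blockwise equivalence to the global Dirichlet norm: the elementary real-variable estimates that settle the Hardy case rest on the maximum modulus being attained on the positive axis and on the coefficients being monotone, and neither property survives differentiation, so it is the Littlewood--Paley transfer via Lemma~\ref{le:eqno} and Theorem~\ref{th:cesaro}, rather than any direct integral-means computation for $f'$, that carries the real weight of the proof.
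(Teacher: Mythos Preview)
This is Theorem~C in the paper, quoted from the literature (Hardy--Littlewood, Zygmund, Pavlovi\'c) without proof, so there is no in-paper argument to compare against; I comment only on the correctness of your proposal.

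Your treatment of $\n{f}_{H^p}^p\asymp K_p(f)$ is the classical one and is fine. The gap is the sentence ``applied blockwise it also gives $\n{\Delta_n f}_{H^p}^p\asymp\sum_{k\in I(n)}(k+1)^{p-2}a_k^p$.'' This blockwise equivalence is false. Your Hardy estimate applies not to $\Delta_nf$ (whose coefficient sequence starts with $2^n$ zeros and is therefore not decreasing) but to the shifted polynomial $Q_n(z)=\sum_{j=0}^{2^n-1}a_{2^n+j}z^j$; it yields $\n{\Delta_nf}_{H^p}^p=\n{Q_n}_{H^p}^p\asymp\sum_{j}(j+1)^{p-2}a_{2^n+j}^p$, which is \emph{not} $K_p(\Delta_nf)=\sum_{j}(2^n+j+1)^{p-2}a_{2^n+j}^p$. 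Concretely, take $a_k=1$ for $k\le 2^n$ and $a_k$ negligible for $k>2^n$: then $\Delta_nf\approx z^{2^n}$, so $\n{\Delta_nf}_{H^p}^p\approx 1$, whereas $K_p(\Delta_nf)\approx 2^{n(p-2)}$, and the ratio is unbounded for $p\neq 2$.

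What you actually need is only the summed statement $\sum_n\n{\Delta_nf}_{H^p}^p\asymp K_p(f)$, and that one is true. The Abel bound you already used gives $|Q_n(e^{i\theta})|\lesssim a_{2^n}\min(2^n,|\theta|^{-1})$, hence $\n{\Delta_nf}_{H^p}^p\lesssim 2^{n(p-1)}a_{2^n}^p$; and since $|\Delta_nf(e^{i\theta})|\ge\tfrac12\sum_{k\in I(n)}a_k\ge 2^{n-1}a_{2^{n+1}}$ for $|\theta|\lesssim 2^{-n}$, also $\n{\Delta_nf}_{H^p}^p\gtrsim 2^{n(p-1)}a_{2^{n+1}}^p$. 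Summing both bounds over $n$ and shifting the index gives $\sum_n\n{\Delta_nf}_{H^p}^p\asymp\sum_n 2^{n(p-1)}a_{2^n}^p\asymp K_p(f)$ (equivalently, invoke Lemma~\ref{le:A}(b) with $\lambda_k=a_k$, $g(z)=(1-z)^{-1}$). With this repair your Dirichlet argument goes through for $p>1$; in fact the one-sided Littlewood--Paley step via type/cotype is unnecessary, since Theorem~\ref{th:dec}(ii) already gives the two-sided decomposition $\n{f'}_{A^p_{p-1}}^p\asymp\sum_n 2^{-np}\n{\Delta_nf'}_{H^p}^p$. The endpoint $p=1$ is not covered by Theorem~\ref{th:cesaro} or Theorem~\ref{th:dec} and needs a separate elementary treatment.
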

\par
 The following decomposition theorem can be found in \cite[Theorem $2.1$]{MP} and \cite[$7.5.8$]{Pabook}.
\begin{other}\label{th:dec}
\par (i)\, Assume  $0<p<\infty$, $1<q<\infty$ and  $0<\a<\infty$. Then,
$$
\int_0^1 (1-r)^{p\alpha-1}M_q^p(r,f)\asymp  |f(0)|^p+\sum_{n=0}^\infty 2^{-n(p\alpha)}||\Delta_nf||^p_{H^q},
$$
for all $f\in\hol(\D)$.
\par
(ii)\, In particular,
if $p>1$ and $\beta>-1$,
$$||f||^p_{A^p_\beta}\asymp |f(0)|^p+\sum_{n=0}^\infty 2^{-n(\beta+1)}||\Delta_nf||^p_{H^p}$$
for all $f\in\hol(\D)$.
\end{other}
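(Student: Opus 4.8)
The plan is to deduce part (ii) from part (i) and to prove (i) by a dyadic Littlewood--Paley decomposition of the radial integral, the whole argument resting on a single scalar estimate for one block. For (ii), I set $q=p$ and $\alpha=\frac{\beta+1}{p}$; then $p\alpha-1=\beta$, the hypothesis $\beta>-1$ forces $\alpha>0$, and passing to polar coordinates gives $\int_0^1(1-r)^{\beta}M_p^p(r,f)\,dr\asymp\int_\D|f(z)|^p(1-|z|^2)^{\beta}\,dA(z)\asymp\|f\|_{A^p_\beta}^p$, so (ii) is exactly the special case $q=p$, $p\alpha=\beta+1$ of (i). Hence I concentrate on (i) for fixed $0<p<\infty$, $1<q<\infty$, $\alpha>0$.

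The basic building block is the one-block estimate
$$\int_0^1(1-r)^{p\alpha-1}M_q^p(r,\Delta_nf)\,dr\asymp 2^{-np\alpha}\,\|\Delta_nf\|_{H^q}^p,$$
which I would obtain from the two-sided bound of Mateljevi\'c and Pavlovi\'c already used in the proof of Lemma~\ref{le:eqno}: since $\Delta_nf$ has frequencies in $[2^n,2^{n+1})\subset[N/2,4N]$ with $N=2^n$, one has $M_q(r,\Delta_nf)\asymp r^{c2^n}\|\Delta_nf\|_{H^q}$ together with the monotonicity bound $M_q(r,\Delta_nf)\le\|\Delta_nf\|_{H^q}$. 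Substituting and recognising the resulting integral $\int_0^1(1-r)^{p\alpha-1}r^{cp2^n}\,dr$ as a Beta function growing like $2^{-np\alpha}$ (the same Stirling computation already performed in Lemma~\ref{le:eqno}) gives the claim.

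For the lower estimate $|f(0)|^p+\sum_n2^{-np\alpha}\|\Delta_nf\|_{H^q}^p\lesssim\int_0^1(1-r)^{p\alpha-1}M_q^p(r,f)\,dr$, the key observation is that because $q>1$ the sharp dyadic projection $f\mapsto\Delta_nf$ is bounded on $H^q$ uniformly in $n$: writing it as a difference of two modulated Riesz projections, the M.~Riesz projection theorem gives $M_q(r,\Delta_nf)=\|\Delta_n(f_r)\|_{H^q}\le C_q\,M_q(r,f)$, where $f_r(z)=f(rz)$. Restricting $r$ to the dyadic ring $I_n=[1-2^{-n},1-2^{-n-1})$, on which $1-r\asymp2^{-n}$ and the block lemma yields $\|\Delta_nf\|_{H^q}\asymp M_q(r,\Delta_nf)\lesssim M_q(r,f)$, and integrating against $(1-r)^{p\alpha-1}$ over the disjoint rings $I_n$ produces the stated sum; the constant term is absorbed using $|f(0)|\le M_q(r,f)$ and $\int_0^{1/2}(1-r)^{p\alpha-1}\,dr<\infty$.

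For the upper estimate I would start from $M_q(r,f)\le|f(0)|+\sum_nM_q(r,\Delta_nf)$. When $0<p\le1$ the $p$-subadditivity of $t\mapsto t^p$ reduces everything term-by-term to the one-block estimate above. When $p>1$ I would, for $r\in I_m$, split the sum at $n=m$, bounding the low blocks $n\le m$ by $\|\Delta_nf\|_{H^q}$ and the high blocks $n>m$ by $M_q(r,\Delta_nf)\le r^{c2^n}\|\Delta_nf\|_{H^q}\lesssim e^{-c2^{\,n-m-1}}\|\Delta_nf\|_{H^q}$; after raising to the power $p$ and integrating $2^{-mp\alpha}$ against the two pieces, the low-block contribution is controlled by the forward weighted discrete Hardy inequality (valid since $\alpha>0$) and the high-block contribution by the backward one, both collapsing to $\sum_n2^{-np\alpha}\|\Delta_nf\|_{H^q}^p$. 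I expect the main obstacle to be precisely this $p>1$ bookkeeping: checking that the weighted discrete Hardy inequalities apply with exponent $\alpha>0$ and that the geometric decay $e^{-c2^{n-m}}$ stays compatible with the dyadic weights. Everything else is a routine assembly of the block lemma, the M.~Riesz theorem, and the Beta-function asymptotics already present in the paper.
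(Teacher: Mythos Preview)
The paper does not prove this statement at all: Theorem~D is quoted as a known result, with the line ``The following decomposition theorem can be found in \cite[Theorem~2.1]{MP} and \cite[7.5.8]{Pabook}'' and no argument given. So there is no ``paper's proof'' to compare against; your proposal is an attempt to supply one.

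Your outline is essentially the standard Mateljevi\'c--Pavlovi\'c argument and is sound. The reduction of (ii) to (i) via $q=p$, $\alpha=(\beta+1)/p$ is correct. For (i), the one-block estimate and the lower bound (M.~Riesz projection plus restriction to the dyadic ring $I_n$) are exactly right and use only tools already present in the paper (Lemma~\ref{le:eqno}, the Riesz theorem). For the upper bound, the $0<p\le1$ case via $p$-subadditivity is immediate; for $p>1$ the low/high-block split and the appeal to weighted discrete Hardy inequalities is the right idea, but you should make explicit the parameter choice that makes it work: for the low blocks one inserts a factor $2^{n\varepsilon}\cdot2^{-n\varepsilon}$ with $0<\varepsilon<\alpha$, applies H\"older in $n$, and then sums the resulting geometric series in $m$; the high-block piece is easier because the super-exponential decay $e^{-c2^{\,n-m}}$ dominates any power weight. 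None of this is delicate, but as written your ``I expect the main obstacle to be\ldots'' hedge is exactly the place a referee would ask you to fill in, so it is worth writing out the two-line H\"older computation rather than naming the inequality.
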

\par
The following lemma can be found in \cite[$7.3.5$]{Pabook}, in a
slightly different form. The proof suggested there  can be applied
to obtain it in the form we need it. \par
\begin{lemma} \label{le:mult}
Suppose $0<p<\infty$ and   $\g\in \R$. For  $f(z)=\sum_{k=0}^\infty
a_k z^k\in \hol(\D)$ let $F(z)=\sum_{k=0}^\infty (k+1)^{\g}a_k z^k$.
Then
$$
\n{\Delta_n F}_{H^p}\asymp 2^{n\g}\n{\Delta_n f}_{H^p} .
$$
\end{lemma}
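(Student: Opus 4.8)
The plan is to deduce the two-sided comparison from a single one-sided estimate, exploiting the symmetry $\g\leftrightarrow-\g$. I would first isolate the claim: for every $\delta\in\R$ there is a constant $C=C(p,\delta)$ so that, writing $G(z)=\sum_{k}(k+1)^{\delta}c_kz^k$ for $g(z)=\sum_k c_kz^k\in\hol(\D)$, one has $\n{\Delta_n G}_{H^p}\le C\,2^{n\delta}\n{\Delta_n g}_{H^p}$ for all $n$. Applying this with $\delta=\g,\ g=f$ gives $\n{\Delta_n F}_{H^p}\lesssim 2^{n\g}\n{\Delta_n f}_{H^p}$; applying it with $\delta=-\g,\ g=F$ (so that $G=f$) gives $\n{\Delta_n f}_{H^p}\lesssim 2^{-n\g}\n{\Delta_n F}_{H^p}$, which is the reverse inequality. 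Hence it suffices to prove the one-sided claim for an arbitrary real exponent.

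To prove the one-sided estimate I would realize the block operation as a smooth partial-sum operator and invoke Theorem~\ref{th:cesaro}. Fix a cutoff $\eta\in C^\infty(\R)$ with $\eta\equiv 1$ on $[1,2]$ and $\supp(\eta)\subset(\tfrac12,4)$, and for $N=2^n$ set $\Phi_N(s)=(s+\tfrac1N)^{\delta}\eta(s)$. Then $\Phi_N\in C^\infty(\R)$ with compact support in $(0,\infty)$. Since $\Delta_n g$ has nonzero coefficients only for $k\in I(n)$, for which $k/N\in[1,2)$ and hence $\Phi_N(k/N)=\big((k+1)/N\big)^{\delta}=N^{-\delta}(k+1)^{\delta}$, the definition of $W_N^{\Phi_N}$ gives $W_N^{\Phi_N}\ast\Delta_n g=N^{-\delta}\Delta_n G$, that is, $\Delta_n G=2^{n\delta}\,W_N^{\Phi_N}\ast\Delta_n g$.

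The crux is a uniform bound $A_{\Phi_N}\le C(\delta)$. On $\supp(\eta)$ we have $s+\tfrac1N\in(\tfrac12,5)$, bounded away from $0$, so $(s+\tfrac1N)^{\delta}$ together with its first two derivatives is bounded there by a constant depending only on $\delta$, uniformly in $N$; multiplying by the fixed cutoff $\eta$ keeps the $C^2$-norm controlled, whence $A_{\Phi_N}\le C(\delta)$. Part (iii) of Theorem~\ref{th:cesaro} then yields, for $p>1$,
$$
\n{\Delta_n G}_{H^p}=2^{n\delta}\n{W_N^{\Phi_N}\ast\Delta_n g}_{H^p}\le C_pA_{\Phi_N}\,2^{n\delta}\n{\Delta_n g}_{H^p}\le C\,2^{n\delta}\n{\Delta_n g}_{H^p},
$$
which is precisely the desired one-sided bound.

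The point that needs care, and where the argument really turns, is that the shift $k\mapsto k+1$ forces the multiplier profile $\Phi_N$ to depend on $N$; this is harmless precisely because the shift $1/N$ only displaces the argument inside the fixed compact set on which $s\mapsto s^{\delta}$ is smooth, so $A_{\Phi_N}$ stays bounded. The remaining obstacle is the range $0<p\le 1$, which Theorem~\ref{th:cesaro}(iii) does not cover: there I would run the same construction but replace the appeal to (iii) by the pointwise kernel estimates of parts (i)--(ii), retaining enough derivatives of $\Phi_N$ (their number increasing as $p\to 0$), following the proof indicated in \cite[$7.3.5$]{Pabook}. I expect this low-$p$ endpoint, rather than the smooth-multiplier step for $p>1$, to be the main difficulty.
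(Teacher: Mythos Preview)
The paper does not actually supply a proof of this lemma: it simply cites \cite[7.3.5]{Pabook} and remarks that the argument there adapts. Your proposal fills this gap using precisely the smooth partial-sum machinery of Theorem~\ref{th:cesaro} that the paper deploys repeatedly elsewhere (e.g.\ in Lemma~\ref{le:3} and in the necessity proofs), so in spirit your approach is exactly what the paper intends. For $p>1$ your argument is correct and complete: the reduction to a one-sided estimate via the symmetry $\g\leftrightarrow-\g$ is clean, the identity $W_{2^n}^{\Phi_{2^n}}\ast\Delta_n g=2^{-n\delta}\Delta_n G$ is right, and the crucial observation that the shift $1/N$ keeps the profile $(s+1/N)^{\delta}$ uniformly $C^2$ on $\supp(\eta)$---hence $A_{\Phi_N}\le C(\delta)$ independent of $N$---is exactly the point.

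Your identification of the range $0<p\le 1$ as the real obstacle is accurate: part~(iii) of Theorem~\ref{th:cesaro} rests on the Hardy--Littlewood maximal theorem and genuinely fails there. The route you indicate (retain more derivatives of $\Phi_N$ and use the pointwise kernel decay in part~(i) to get a convolution bound valid for small $p$) is the standard one and is what \cite[7.3.5]{Pabook} does; since the paper itself defers to that reference, your treatment is at least as complete as the paper's. For the purposes of this paper, incidentally, only the case $p>1$ is ever invoked (in Lemma~\ref{le:htidec}), so the portion of your argument that is fully written out already suffices for every application here.
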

\par
\begin{lemma}\label{le:3}
Suppose that $1<p<\infty$. There exists a constant $C=C(p)>0$ such
that if $f\in H^1$, $g(z)=\sum_{k=0}^\infty c_k z^k\in \hol(\D),$
and we set
$h(z)=\sum_{k=0}^{\infty}c_k\left(\int_0^1t^{k+1}f(t)\,dt\right)z^k$
then
\begin{equation*}
\|\Delta_n h\|_{H^p} \le C\left(\int_0^1
t^{2^{n-2}+1}|f(t)|\,dt\right)\| \Delta_ng\|_{H^p},\quad n\ge 3.
\end{equation*}
\end{lemma}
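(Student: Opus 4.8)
The plan is to derive a closed integral representation for the dyadic block $\Delta_n h$ and then reduce the estimate to Minkowski's integral inequality combined with the high-frequency (lacunary) support of $\Delta_n g$.

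First I would fix $n\ge 3$ and write the block explicitly,
$$\Delta_n h(z)=\sum_{k\in I(n)}c_k\left(\int_0^1 t^{k+1}f(t)\,dt\right)z^k .$$
Since $f\in H^1$, the Fej\'er--Riesz inequality gives $\int_0^1|f(t)|\,dt<\infty$, so each coefficient is finite; as $I(n)$ is a finite set, interchanging the sum with the integral is immediate. Factoring out $f(t)$ and one power of $t$, the inner sum becomes $\Delta_n g$ evaluated at $tz$, so that
$$\Delta_n h(z)=\int_0^1 t\,f(t)\,(\Delta_n g)(tz)\,dt,\qquad z\in\D .$$
Both sides are polynomials in $z$ (hence continuous on $\overline{\D}$), so this identity persists on $\T$.

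Next I would take $H^p$ norms. Because $\Delta_n h$ is a polynomial, $\n{\Delta_n h}_{H^p}$ equals its $L^p(\T)$ norm, and since $p>1$ Minkowski's integral inequality in $L^p(\T)$ applies to the representation above, yielding
$$\n{\Delta_n h}_{H^p}\le\int_0^1 t\,|f(t)|\,M_p(t,\Delta_n g)\,dt .$$
The decisive point is the support of $\Delta_n g$: every frequency appearing is at least $2^n$, so I would write $\Delta_n g(z)=z^{2^n}Q(z)$ with $Q(z)=\sum_{j=0}^{2^n-1}c_{2^n+j}z^j$. Then $M_p(t,\Delta_n g)=t^{2^n}M_p(t,Q)\le t^{2^n}\n{Q}_{H^p}=t^{2^n}\n{\Delta_n g}_{H^p}$, using that $M_p(t,\cdot)$ is nondecreasing and bounded by the $H^p$-norm for $t<1$ and that multiplication by $z^{2^n}$ preserves the $H^p$-norm.

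Substituting this estimate and using $t^{2^n+1}\le t^{2^{n-2}+1}$ for $t\in[0,1]$ gives
$$\n{\Delta_n h}_{H^p}\le\left(\int_0^1 t^{2^n+1}|f(t)|\,dt\right)\n{\Delta_n g}_{H^p}\le\left(\int_0^1 t^{2^{n-2}+1}|f(t)|\,dt\right)\n{\Delta_n g}_{H^p},$$
which is the assertion, in fact with $C=1$. I do not expect a genuine obstacle here: the only ideas are the integral representation and the monomial factorization that produces the decay factor $t^{2^n}$, and both the hypothesis $n\ge 3$ and the exponent $2^{n-2}$ in the statement are more generous than the argument actually requires.
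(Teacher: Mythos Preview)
Your argument is correct, and it is genuinely different from---and more elementary than---the paper's proof. The paper treats the sequence $\left(\int_0^1 t^{k+1}f(t)\,dt\right)_k$ as a Fourier multiplier on the block $\Delta_n g$: it introduces the smooth functions $\Upsilon_n(s)=\int_0^1 t^{2^n s+1}f(t)\,dt$, extends them to compactly supported $C^\infty$ functions $\Phi_n$ with controlled second derivative, and then invokes Theorem~\ref{th:cesaro}(iii) (the smooth partial sum bound) to get $\|\Delta_n h\|_{H^p}=\|W_{2^n}^{\Phi_n}\ast\Delta_n g\|_{H^p}\le C_p A_{\Phi_n}\|\Delta_n g\|_{H^p}$. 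Your route avoids the multiplier machinery entirely: the integral representation $\Delta_n h(z)=\int_0^1 t\,f(t)\,(\Delta_n g)(tz)\,dt$ plus Minkowski and the factorization $\Delta_n g(z)=z^{2^n}Q(z)$ give the result directly, with $C=1$, valid for all $p\ge 1$ and all $n\ge 0$, and with the sharper factor $\int_0^1 t^{2^n+1}|f(t)|\,dt$. The paper's approach has the advantage of being the same technique already in use for the necessity arguments (Lemma~\ref{le:1} and Section~\ref{necessity}), so it fits the overall framework; yours is self-contained and shows that the lemma itself needs no heavy tools.
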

\begin{proof} For  each $n=1,2, \dots$, define
$$
\Upsilon_n(s)=\int_0^1 t^{2^n s+1}f(t)\,dt,\quad s\ge 0.
$$
Clearly, $\Upsilon_n$ is a $C^\infty(0,\infty)$-function and
\begin{equation}\label{eq:up1}
\left| \Upsilon_n(s)\right|\le \int_0^1 t^{2^{n-2}+1}|f(t)|\,dt,\quad s\ge \frac12.
\end{equation}
\par Furthermore, since
$$
 \sup_{0<x<1}\left(\log
\frac{1}{x}\right)^2 x^{1/2}=C(2)<\infty,
$$
we have
\begin{equation}
\begin{split}\label{eq:up2}
\left| \Upsilon''_n(s)\right| &
\le \int_0^1\left[\left(\log\frac{1}{t^{2^n}}\right)^2 t^{2^{n-1}}\right]\,t^{2^ns+1-2^{n-1}}|f(t)|\,dt
\\  & \le C(2)\int_0^1 t^{2^ns+1-2^{n-1}}|f(t)|\,dt
\\  & \le C(2) \int_0^1 t^{2^{n-2}+1}|f(t)|\,dt, \quad s\ge \frac34.
\end{split}
\end{equation}
\par
 Then, using (\ref{eq:up1}) and (\ref{eq:up2}), for each $n=1,2,\dots$ we can take
a function $\Phi_n\in C^\infty(\R)$ with
$\supp(\Phi_n)\in\left(\frac34, 4\right)$, and such that
$$
\Phi_n(s)= \Upsilon_n(s),\quad s\in\,[1,2],
$$
and
$$
A_{\Phi_n}=\max_{s\in\R}|\Phi_n(s)|+\max_{s\in\R}|\Phi_n''(s)|\le C\int_0^1 t^{2^{n-2}+1}|f(t)|\,dt.
$$
We can then write
\begin{align*}
\Delta_nh(z)&=\sum_{k\in I(n)}c_k\left(\int_0^1t^{k+1}f(t)\,dt\right)z^k\\
&=\sum_{k\in I(n)}c_k\Phi_n\left (\frac{k}{2^n}\right )z^k\\
&=W_{2^n}^{\Phi_n}\ast\Delta_n g(z).
\end{align*}
So by using
part (iii) of Theorem~\ref{th:cesaro}, we have
\begin{equation*}
\begin{split}
\| \Delta_n h\|_{H^p}
&= \|W_{2^n}^{\Phi_{n}}\ast \Delta_n g\|_{H^p}\\
& \leq C_pA_{\Phi_n}\| \Delta_n g\|_{H^p}\\
&\leq C \left(\int_0^1 t^{2^{n-2}+1}|f(t)|\,dt\right)\| \Delta_n
g\|_{H^p}.
\end{split}
\end{equation*}
\end{proof}
\par
We shall need  several lemmas. The first one can be found in
\cite[p.4]{Padec}
\par
 \begin{otherl}\label{le:A}
 Assume that $1<p<\infty$ and $\lambda=\left\{\lambda_n\right\}_{n=0}^\infty$
 is a monotone sequence of non negative numbers. Let $(\lambda g)(z)=\sum_{n=0}^\infty \lambda_nb_n z^n$, where $g(z)=\sum_{n=0}^\infty b_n z^n$.
 Then:
 \par $(a)$\, If $\left\{\lambda_n\right\}_{n=0}^\infty$ is nondecreasing, there is $C>0$ such that
 $$
 C^{-1}\lambda_{2^{n-1}} ||\Delta_n g||^p_{H^p}\le ||\Delta_n \lambda g||^p_{H^p}\le C\lambda_{2^{n}} ||\Delta_n g||^p_{H^p}.
 $$
 \par $(b)$\, If $\left\{\lambda_n\right\}_{n=0}^\infty$ is nonincreasing, there is $C>0$ such that
 $$
 C^{-1}\lambda_{2^{n}} ||\Delta_n g||^p_{H^p}\le ||\Delta_n \lambda g||^p_{H^p}\le C\lambda_{2^{n-1}} ||\Delta_n g||^p_{H^p}.
 $$
 \end{otherl}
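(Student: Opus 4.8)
The plan is to read $g\mapsto\lambda g$ as a Fourier multiplier and to prove, uniformly in $n$, a two‑sided comparison between $\Delta_n(\lambda g)$ and $\Delta_n g$ on a single dyadic block. Since $\Delta_n(\lambda g)(z)=\sum_{k\in I(n)}\lambda_k b_kz^k$ and $\lambda$ is monotone, the numbers $\lambda_k$ with $k\in I(n)$ all lie between the two endpoint values $\lambda_{2^{n-1}}$ and $\lambda_{2^n}$; thus both inequalities in $(a)$ reduce to the assertion that the block multiplier $(\lambda_k)_{k\in I(n)}$, divided by the appropriate endpoint, is bounded on $H^p$ with constant independent of $n$. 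The one external ingredient I would record first is that for $1<p<\infty$ the partial–sum operators $S_mf=\sum_{k\le m}\widehat f(k)z^k$ are uniformly bounded on $H^p$; this follows from the M.~Riesz projection theorem (already used in Section~\ref{necessity}), since $S_m$ is a fixed combination of modulated Riesz projections. Hence for any interval $[a,b]$ the restriction multiplier $f\mapsto\sum_{a\le k\le b}\widehat f(k)z^k=S_bf-S_{a-1}f$ is bounded on $H^p$ with a constant depending only on $p$.

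For the upper bound in $(a)$ I would normalize by the larger endpoint, setting $\mu_k=\lambda_k/\lambda_{2^n}$ for $k\in I(n)$, so that $(\mu_k)$ is nondecreasing and takes values in $(0,1]$. Using the layer–cake identity $\mu_k=\int_0^1\mathbf{1}\{\mu_k>\tau\}\,d\tau$ together with the fact that, by monotonicity, each level set $\{k\in I(n):\mu_k>\tau\}$ is a sub-interval of $I(n)$, I can write
$$
\Delta_n(\lambda g)=\lambda_{2^n}\sum_{k\in I(n)}\mu_k b_kz^k=\lambda_{2^n}\int_0^1 R_\tau\big(\Delta_n g\big)\,d\tau,
$$
where $R_\tau$ restricts $\Delta_n g$ to that sub-interval. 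Each $\n{R_\tau(\Delta_n g)}_{H^p}\le C\n{\Delta_n g}_{H^p}$ uniformly, so Minkowski's integral inequality gives $\n{\Delta_n(\lambda g)}_{H^p}\le C\lambda_{2^n}\n{\Delta_n g}_{H^p}$, which is the claimed upper bound after raising to the $p$-th power.

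The lower bound is the same argument applied to the reciprocal multiplier: writing $\Delta_n g=\sum_{k\in I(n)}\lambda_k^{-1}(\lambda_k b_k)z^k$ and normalizing by $\lambda_{2^{n-1}}^{-1}$ produces the monotone sequence $\sigma_k=\lambda_{2^{n-1}}/\lambda_k\in(0,1]$, and the same layer–cake/partial–sum estimate yields $\n{\Delta_n g}_{H^p}\le C\lambda_{2^{n-1}}^{-1}\n{\Delta_n(\lambda g)}_{H^p}$, i.e.\ the lower bound. Part $(b)$, with $\lambda$ nonincreasing, is obtained verbatim after interchanging the roles of the two endpoints $\lambda_{2^{n-1}}$ and $\lambda_{2^n}$.

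The one genuinely non-routine point is the choice of normalization. Because the dyadic ratio $\lambda_{2^n}/\lambda_{2^{n-1}}$ is in general unbounded, $(\lambda_k)$ cannot be treated as essentially constant on the block, and a crude termwise comparison loses a whole dyadic factor. Dividing by the correct endpoint is exactly what converts $(\lambda_k)$ into a monotone $(0,1]$-valued multiplier whose level sets are sub-intervals, so that the uniform partial-sum bound carries the entire estimate; everything else (Minkowski, passing to $p$-th powers, the symmetric case) is routine.
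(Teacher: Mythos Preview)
The paper does not prove this lemma; it merely cites \cite[p.~4]{Padec}. Your overall strategy---writing a monotone $[0,1]$-valued multiplier via the layer-cake formula as an average of interval restrictions, and invoking the uniform $H^p$-bound on partial sums coming from the M.~Riesz projection theorem---is the standard argument and is sound in principle.

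The gap is in the normalization for the upper bound in $(a)$. With the paper's convention $I(n)=\{k:2^n\le k\le 2^{n+1}-1\}$ and $\lambda$ nondecreasing, one has $\lambda_k\ge\lambda_{2^n}$ for every $k\in I(n)$, so your claim that $\mu_k=\lambda_k/\lambda_{2^n}$ lies in $(0,1]$ is false and the layer-cake step does not apply. Indeed the inequality as printed cannot hold with a constant independent of $n$: take $g(z)=z^{2^{n+1}-1}$, so that $\|\Delta_n(\lambda g)\|_{H^p}=\lambda_{2^{n+1}-1}$ while $\|\Delta_n g\|_{H^p}=1$, and choose any nondecreasing $\lambda$ with $\lambda_{2^{n+1}-1}/\lambda_{2^n}$ unbounded. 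The same objection applies to the lower bound in $(b)$. The source of the trouble is an index mismatch between the paper's definition of $I(n)$ and the statement transcribed from \cite{Padec}; with the endpoints replaced by $\lambda_{2^n}$ and $\lambda_{2^{n+1}}$ (equivalently, by the minimum and maximum of $\lambda_k$ over $I(n)$), your argument goes through verbatim. Note that the only use the paper makes of this lemma is the upper bound in $(b)$, in the proof of Lemma~\ref{le:htidec}; that inequality \emph{is} correct as stated, since for nonincreasing $\lambda$ the value $\lambda_{2^{n-1}}$ genuinely dominates every $\lambda_k$ on $I(n)$, and your proof of that direction is fine.
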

\par
\begin{lemma}\label{le:htidec}
\par
(i) Assume $1<p<\infty$,  $-1<\a<\infty$,  and $f\in \Dpa$. Then
 $$
 ||\hti(f)||^p_{\Dpa}\asymp |\hti(f)(0)|^p+\sum_{j=1}^\infty (j+1)^{2p-3-\alpha}
 \left(\int_0^1 t^{j+1}|f(t)|\,dt\right)^p.
 $$
\par $(ii)$ Assume  $1<p<\infty$,  $-1<\alpha<p-2$, and $f\in A^p_\alpha$. Then
 $$
 ||\hti(f)||^p_{A^p_\a}\asymp |\hti(f)(0)|^p+\sum_{j=1}^\infty (j+1)^{p-3-\alpha}
 \left(\int_0^1 t^{j}|f(t)|\,dt\right)^p.
 $$
 \end{lemma}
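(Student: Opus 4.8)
The plan is to read off the Taylor coefficients of $F:=\hti(f)$ and to exploit that they form a \emph{decreasing} sequence, which puts us exactly in the setting of Theorem~\ref{th:decreasing}. Expanding $\frac{1}{1-tz}=\sum_{k\ge0}t^kz^k$ and integrating term by term (the integral defining $F$ converges for the functions $f$ under consideration), one gets
\begin{equation*}
F(z)=\sum_{k=0}^\infty c_kz^k,\qquad c_k=\int_0^1 t^k|f(t)|\,dt,
\end{equation*}
and since $t^{k+1}\le t^k$ on $(0,1)$ the $c_k\ge0$ are nonincreasing; moreover $F(0)=c_0=\hti(f)(0)$. Thus the right-hand sides of (i) and (ii) are $c_0^p+\sum_{j\ge1}(j+1)^{2p-3-\alpha}c_{j+1}^p$ and $c_0^p+\sum_{j\ge1}(j+1)^{p-3-\alpha}c_j^p$, i.e.\ coefficient sums of the decreasing sequence $\{c_k\}$.

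For part (ii) the device I would use is \emph{dilation}. Fix $0<r<1$ and set $F_r(z)=F(rz)=\sum_k c_kr^kz^k$; its coefficients $c_kr^k$ are again nonnegative, nonincreasing and tend to $0$, so Theorem~\ref{th:decreasing} applies to $F_r$ \emph{with constants depending only on $p$} and gives
\begin{equation*}
M_p^p(r,F)=\|F_r\|_{H^p}^p\asymp K_p(F_r)=\sum_{k=0}^\infty(k+1)^{p-2}c_k^p\,r^{kp}.
\end{equation*}
Since $\|F\|_{A^p_\alpha}^p\asymp\int_0^1 M_p^p(r,F)(1-r)^\alpha\,dr$ and all terms are nonnegative, Tonelli's theorem yields
\begin{equation*}
\|F\|_{A^p_\alpha}^p\asymp\sum_{k=0}^\infty(k+1)^{p-2}c_k^p\int_0^1 r^{kp}(1-r)^\alpha\,dr .
\end{equation*}
The integral is the Beta value $B(kp+1,\alpha+1)\asymp(k+1)^{-(\alpha+1)}$, by the same Stirling estimate used in the proof of Lemma~\ref{le:eqno}. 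Hence $\|F\|_{A^p_\alpha}^p\asymp\sum_k(k+1)^{p-3-\alpha}c_k^p$, which is (ii). This computation is in fact valid for every $\alpha>-1$; the hypothesis $\alpha<p-2$ enters only to guarantee $f\in A^1_{[0,1)}$.

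For part (i) I would write $\|F\|_{\Dpa}^p=c_0^p+\|F'\|_{A^p_\alpha}^p$ and treat $\|F'\|_{A^p_\alpha}^p\asymp\int_0^1 M_p^p(r,F')(1-r)^\alpha\,dr$ by the identical integral-means plus Beta-function scheme, where $F'(z)=\sum_{j\ge0}(j+1)c_{j+1}z^j$. Differentiation multiplies the $j$-th coefficient by $(j+1)$, so in the target the weight becomes $(j+1)^{p-2}\cdot(j+1)^{p}\cdot(j+1)^{-(\alpha+1)}=(j+1)^{2p-3-\alpha}$ attached to $c_{j+1}^p$, matching (i) (the missing $j=0$ term is $\asymp c_1^p\le c_0^p$, hence absorbed). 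When $\alpha>p-1$ one can bypass the derivative entirely: by Flett's identity $\Dpa=A^p_{\alpha-p}$ with $\alpha-p>-1$, so (i) follows at once from (ii) with parameter $\alpha-p$.

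The genuine difficulty, and the step I expect to require the most care, is the Dirichlet range $-1<\alpha\le p-1$ (the interesting case of Theorem~\ref{th:dirichlet}), where $\Dpa$ is not a Bergman space and integration by parts would push the weight below $-1$, so the derivative $F'$ cannot be avoided. There the coefficients $(j+1)c_{j+1}$ of $F'$ are \emph{not} monotone, so Theorem~\ref{th:decreasing} does not apply to $F'_r$ and the clean dilation computation breaks down. One of the two inequalities in $M_p^p(r,F')\asymp K_p(F'_r)$ is still free from the Hardy–Littlewood inequalities (\ref{eq:hl1})--(\ref{eq:hl2}) (valid for arbitrary coefficients), and the reverse inequality is the crux. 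It must genuinely use the structure of $\{c_k\}$: being a moment sequence it is log-convex, which forces $\{(j+1)c_{j+1}\}$ to be unimodal, and splitting this sequence at its maximum into a nondecreasing and a nonincreasing part reduces the reverse bound to the monotone situation governed by Theorem~\ref{th:decreasing}. Establishing this reverse Hardy–Littlewood estimate for $F'$ is the main obstacle; with it in hand, the Beta-function bookkeeping above completes (i).
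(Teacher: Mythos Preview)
Your argument for part (ii) is correct and is in fact more direct than the paper's: the paper does not use the dilation trick but instead runs everything through the dyadic decomposition of Theorem~\ref{th:dec}, pulling out the decreasing multiplier $c_k$ on each block via Lemma~\ref{le:A} and evaluating $\|\sum_{j\in I(n)}z^j\|_{H^p}^p\asymp 2^{n(p-1)}$. Your route via $M_p^p(r,F)\asymp K_p(F_r)$ (Theorem~\ref{th:decreasing} applied to the decreasing sequence $c_kr^k$) followed by the Beta integral is shorter and perfectly valid.

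For part (i), however, the proposal has a genuine gap. You correctly isolate the obstacle: the coefficients $(j+1)c_{j+1}$ of $F'$ are not monotone, so Theorem~\ref{th:decreasing} no longer applies to $F'_r$. But the proposed cure is not established. Log-convexity of the moment sequence $\{c_k\}$ (which you get from Cauchy--Schwarz) tells you that $c_{k+1}/c_k$ is nondecreasing; it does \emph{not} by itself force $(j+1)c_{j+1}$ to be unimodal, since you are multiplying a nondecreasing ratio by the decreasing factor $(j+2)/(j+1)$. Even granting unimodality, the ``increasing half'' of the split is a polynomial with increasing positive coefficients, and Theorem~\ref{th:decreasing} gives you nothing there; you would still need a separate argument for the reverse Hardy--Littlewood inequality on that piece, uniformly in the splitting point and in $r$. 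As written this is a sketch of a hoped-for argument, not a proof.

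The paper avoids this difficulty altogether, and the mechanism is worth noting because it is reusable. Instead of trying to control $M_p(r,F')$ globally, it passes to dyadic blocks via Theorem~\ref{th:dec}(ii):
\[
\|F'\|_{A^p_\alpha}^p\asymp |F'(0)|^p+\sum_{n\ge0}2^{-n(\alpha+1)}\|\Delta_nF'\|_{H^p}^p .
\]
On the block $I(n)$ one has $(j+1)\asymp 2^n$, so Lemma~\ref{le:mult} lets you replace the non-monotone weight $(j+1)$ by the constant $2^n$: $\|\Delta_nF'\|_{H^p}\asymp 2^n\bigl\|\sum_{j\in I(n)}c_{j+1}z^j\bigr\|_{H^p}$. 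What remains is a block with the \emph{decreasing} multiplier $c_{j+1}$ acting on $\sum_{j\in I(n)}z^j$, exactly the situation handled by Lemma~\ref{le:A}(b). One then uses $\|\sum_{j\in I(n)}z^j\|_{H^p}^p\asymp 2^{n(p-1)}$ and collapses the dyadic sum back to $\sum_j(j+1)^{2p-3-\alpha}c_{j+1}^p$. The point is that the troublesome factor $(j+1)$ never has to interact with Theorem~\ref{th:decreasing}; it is peeled off block by block where it is essentially constant. This is the missing idea in your treatment of (i).
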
\par
 \begin{proof}
 Set $r_n=1-\frac{1}{2^{n}}$. Applying \cite[Lemma 3.1]{MP} to the function $h(z)=\frac{1}{1-z}=\sum_{k=0}^{\infty}z^k$, we deduce that
 \begin{equation}\label{j1}
 \n{\Delta_nh}^p_{H^p}\asymp \int_{-\pi}^\pi\frac{1}{|1-r_ne^{it}|^p}\,dt\asymp \frac{1}{(1-r_n)^{p-1}}\asymp 2^{n(p-1)}.
 \end{equation}
\par Now, we shall prove $(i)$.
By Theorem~\ref{th:dec}~(ii) we have
 \begin{align*}
 \n{\hti(f)}^p_{\Dpa}&=|\hti(f)(0)|^p+\n{\hti(f)'}^p_{A^p_\a}\\
 &\asymp |\hti(f)(0)|^p+|\hti(f)'(0)|^p+\sum_{n=0}^\infty 2^{-n(\a+1)}\|\Delta_n\hti(f)'\|^p_{H^p}\\
 &\asymp |\hti(f)(0)|^p+\sum_{n=0}^\infty 2^{-n(\a+1)}\|\Delta_n\hti(f)'\|^p_{H^p},
 \end{align*}
 where we have taken into account that
 $$
 \hti(f)'(0)=\int_0^1t|f(t)|dt
 \asymp\ \int_0^1|f(t)|dt=\hti(f)(0).
 $$
We then apply Lemma \ref{le:mult} and subsequently  Lemma \ref{le:A}~(b) with the
 nonincreasing  sequence $\int_0^1 t^{j+1}|f(t)|dt$ and (\ref{j1}) to
 obtain
 \begin{align*}
 \sum_{n=0}^\infty 2^{-n(\a+1)}\|\Delta_n\hti(f)'\|^p_{H^p} &= \sum_{n=0}^\infty 2^{-n(\a+1)}\left\|\sum_{j\in I(n)}(j+1)\left(\int_0^1 t^{j+1}|f(t)|\,dt\right)z^j\right\|^p_{H^p}\\
 &\asymp \sum_{n=0}^\infty 2^{-n(\a+1-p)}\left\|\sum_{j\in I(n)}\left(\int_0^1 t^{j+1}|f(t)|\,dt\right)z^j\right\|^p_{H^p}\\
 & \lesssim   \sum_{n=0}^\infty 2^{-n(\alpha+1-p)}
 \left(\int_0^1 t^{2^{n-1}+1}|f(t)|\,dt\right)^p
 \left\|\sum_{j\in I(n)}z^j\right\|^p_{H^p}\\
 & \asymp \sum_{n=0}^\infty 2^{n(2p-2-\alpha)}\left(\int_0^1 t^{2^{n-1}+1}|f(t)|\,dt\right)^p\\
 & \asymp \sum_{j=1}^\infty (j+1)^{(2p-3-\alpha)}\left(\int_0^1 t^{j+1}|f(t)|\,dt\right)^p.
 \end{align*}
\par
Analogously, it can be proved that
$$
\sum_{n=0}^\infty 2^{-n(\a+1)}\|\Delta_n\hti(f)'\|^p_{H^p}
\gtrsim \sum_{j=1}^\infty (j+1)^{(2p-3-\alpha)}\left(\int_0^1 t^{j+1}|f(t)|\,dt\right)^p.
$$
and the assertion of (i) follows.
\par The proof of (ii) is similar and is omitted.
\end{proof}
\par
We are now ready to prove the sufficient conditions.\par
\vspace{0.5cm}
\begin{Pf}{\em{the sufficiency statement in Theorem \ref{th:dirichlet}}}:
\par
Let $p, \a$ be as in the statement and assume $g\in \Lambda\left(p, \frac{1}{p}\right)$.
For  $f\in\Dpa$,  bearing in mind that
$\hg(f)'(z)=\hgp(zf)$ and using Theorem~\ref{th:dec}~(ii), we obtain
\begin{equation*}
\begin{split}\label{eq:sd1}
\n{\hg(f)}^p_{\Dpa}  &=|\hg(f)(0)|^p+||\hgp(zf)||^p_{A^p_\a}\\
&=|\hg(f)(0)|^p+|\hgp(zf)(0)|^p+\sum_{n=0}^\infty 2^{-n(\a+1)}\n{\Delta_n \hgp(zf)}^p_{H^p} .
\end{split}
\end{equation*}
Now,
\begin{equation*}
\begin{split}
|\hg(f)(0)|^p+|\hgp(zf)(0)|^p&\leq (|g'(0)|^p+|g''(0)|^p)\left(\int_0^1|f(t)|\,dt\right)^p\\
&\leq C(g)\int_0^1|f(t)|^p\,dt\\
&\leq C(g)\int_0^1 M_{\infty}^p(t, f)(1-t)^{\alpha -p+1}\,dt  \\
&\leq C(g, p, \a)\n{f}_{\Dpa}^p
\end{split}
\end{equation*}
where in the last step we have used Lemma \ref{le:hti2} and the
observation that since $p-2<\a\leq p-1$ we have $-1<\a-p+1\leq 0$.
\par On the other hand, if we write
$g''(z)=\sum_{k=0}^{\infty}c_kz^k$ then
$$
\hgp(zf)(z)=\sum_{k=0}^{\infty} c_k\left(\int_0^1t^{k+1}f(t)\,dt\right)z^k
$$
and we can apply Lemma \ref{le:3} and part (v) of Theorem~\ref{th:mlip} to obtain
\begin{align*}
\n{\Delta_n \hgp(zf)}^p_{H^p}&\leq C\left(\int_0^1t^{2^{n-2}+1}|f(t)|dt\right)^p
\n{\Delta_n g''}_{H^p}^p\\
&\leq C \left(\int_0^1t^{2^{n-2}+1}|f(t)|dt\right)^p2^{pn(2-\frac{1}{p})}
\end{align*}
for $n\geq 3$. Thus
\begin{align*}
\sum_{n=3}^\infty 2^{-n(\a+1)}\n{\Delta_n &\hgp(zf)}^p_{H^p}
\leq  C \sum_{n=3}^\infty 2^{n(2p-2-\alpha)}\left(\int_0^1t^{2^{n-2}+1}|f(t)|\,dt\right)^p\\
%&\leq C\sum_{n=3}^\infty 2^{n(2p-2-\alpha)}\left(\int_0^1t^{2^{n-2}+1}|f(t)|\,dt\right)^p\\
&\asymp C\sum_{n=0}^\infty 2^{(n+1)(2p-2-\alpha)}\left(\int_0^1t^{2^{n+1}+1}|f(t)|\,dt\right)^p ;
\end{align*}
Now it is easy to see that
$$
2^{n}\left(\int_0^1t^{2^{n+1}+1}|f(t)|dt\right)^p\leq \sum_{j\in I(n)}\left(\int_0^1t^{j+1}|f(t)|dt\right)^p ,
$$
and we can continue the above estimate as follows
\begin{align*}
\mspace{20mu} & \leq C\sum_{n=0}^\infty 2^{(n+1)(2p-3-\alpha)}\sum_{j\in I(n)}\left(\int_0^1 t^{j+1}|f(t)|\,dt\right)^p\\
&\asymp C\sum_{j=1}^\infty (j+1)^{(2p-3-\alpha)}\left(\int_0^1t^{j+1}|f(t)|\,dt\right)^p\\
&\leq C\left(|\hti(f)(0)|^p+\sum_{j=1}^\infty (j+1)^{(2p-3-\alpha)}\left(\int_0^1t^{j+1}|f(t)|\,dt\right)^p\right)\\
&\asymp \n{\hti(f)}_{\Dpa}^p  \\
&\le C\n{f}_{\Dpa}^p,
\end{align*}
where we have used  Lemma~\ref{le:htidec}~(i)   and Theorem~\ref{th:hti}~(iii).
This  together with the  inequality for $|\hg(f)(0)|^p+|\hgp(zf)(0)|^p$
finishes the proof.
\end{Pf}
\par
\vspace{1em}
\begin{Pf}{\em{the sufficiency statement in Theorem \ref{th:bergman}.}}
\par
 Let $p, \a$ be as in the statement and let
$g(z)=\sum_{k=0}^\infty b_k z^k$ be the power series for $g$. For   $f\in A^p_\a$,  from
Theorem \ref{th:dec} we have
\begin{equation}
\begin{split}\label{eq:sb1}
\n{\hg(f)}^p_{A^p_\a}   \asymp & |\hg(f)(0)|^p+\sum_{n=0}^\infty
2^{-n(\alpha+1)}\n{\Delta_n\hg(f)}^p_{H^p} .
\end{split}
\end{equation}
Now,
\begin{align*}
|\hg(f)(0)|&\leq |g'(0)|\int_0^1|f(t)|\,dt\\
&= |g'(0)|\int_0^1|f(t)|(1-t)^{\frac{\a+1}{p}} (1-t)^{-\frac{\a+1}{p}}\,dt\\
\intertext{and by  H\"older's inequality,}
&\leq C(g,p,\a)\left(\int_0^1 M_{\infty}^p(t, f)(1-t)^{\a+1}\,dt\right)^{1/p}\\
&\leq C(g,p,\a)\n{f}_{A^p_\a},
\end{align*}
where the last inequality is from Lemma \ref{le:hti1}.
\par
Now write $g'(z)=\sum_{k=0}^{\infty}c_kz^k$,
then
$$
\hg(f)(z)=\sum_{k=0}^{\infty}\left(c_k\int_0^1t^kf(t)\,dt\right)z^k.
$$
Now  Lemma~\ref{le:3}
remains valid if we replace the power $t^{k+1}$ appearing in the definition of the function $h$ in statement of
the Lemma by $t^k$,  and the power $t^{2^{n-2}+1}$ in the conclusion by $t^{2^{n-2}}$. This variation
can be proved in the same way as the original version. Applying the Lemma in this new form and using the assumption for $g$ we find
\begin{align*}
\n{\Delta_n\hg(f)}_{H^p}^p&\leq C\left(\int_0^1t^{2^{n-2}}|f(t)|\,dt\right)^p\n{\Delta_ng'}_{H^p}^p\\
&\leq C\left(\int_0^1t^{2^{n-2}}|f(t)|\,dt\right)^p
2^{pn(1-\frac{1}{p})}.
\end{align*}
Now, the proof can be completed as the previous one using Theorem
\ref{th:dec}~(ii), Lemma~\ref{le:htidec}~(ii) and
Theorem~\ref{th:hti}~(ii). Namely, \par
\begin{align*}
\sum_{n=3}^\infty 2^{-n(\a+1)}&\n{\Delta_n \hg(f)}^p_{H^p}\leq  C \sum_{n=3}^\infty 2^{n(p-2-\alpha)}\left(\int_0^1t^{2^{n-2}}|f(t)|\,dt\right)^p\\
&\asymp C\sum_{n=0}^\infty 2^{(n+1)(p-2-\alpha)}\left(\int_0^1t^{2^{n+1}}|f(t)|\,dt\right)^p \\
 & \leq C\sum_{n=0}^\infty 2^{(n+1)(p-3-\alpha)}\sum_{j\in I(n)}\left(\int_0^1 t^{j}|f(t)|\,dt\right)^p\\
&\asymp C\sum_{j=1}^\infty (j+1)^{(p-3-\alpha)}\left(\int_0^1t^{j}|f(t)|\,dt\right)^p\\
&\leq C\left(|\hti(f)(0)|^p+\sum_{j=1}^\infty (j+1)^{(p-3-\a)}\left(\int_0^1t^{j}|f(t)|\,dt\right)^p\right)\\
&\asymp \n{\hti(f)}_{A^p_\a}^p  \\
&\leq C\n{f}_{A^p_\a}^p
\end{align*}
and  together with the inequality $|\hg(f)(0)|\leq  C(g,p,\a)\n{f}_{A^p_\a} $  this
finishes the proof.
 \end{Pf}
 \par\bigskip
 If $p>1$ and $f\in H^p$ then  $\hti(f)(z)=\sum_{j=0}^\infty\left(\int_0^1t^{j}|f(t)|\,dt\right)z^j$
is  analytic  in $\D$ and has nonnegative  Taylor coefficients
decreasing to zero. Thus   Theorem \ref{th:decreasing} implies that
 \begin{equation}\label{eq:equivhpdp}
 ||\hti(f)||^p_{\Dp}\asymp ||\hti(f)||^p_{H^p}\asymp\sum_{j=0}^\infty (j+1)^{p-2}\left(\int_0^1t^{j}|f(t)|\,dt\right)^p.
 \end{equation}
\par
\begin{Pf}{\em{the sufficiency statement in Theorem \ref{th:hardy1p2}.}}
Assume that $1<p\le 2$ and $g(z)=\sum_{k=0}^\infty b_k z^k\in
\Lambda \left(p,\frac{1}{p}\right)$. Take $f\in H^p$. Since
$\Dp\subset H^p$ with domination in the norms, by the proof of
Theorem~\ref{th:dirichlet} with $\alpha=p-1$, (\ref{eq:equivhpdp})
and Theorem \ref{th:hti} (i) we obtain
\begin{equation*}
\begin{split}\label{eq:hps1n}
||\hg(f)||^p_{H^p} \le C ||\hg(f)||^p_{\Dp} \le C
||\hti(f)||^p_{\Dp}\asymp ||\hti(f)||^p_{H^p}\le C||f||^p_{H^p}.
\end{split}
\end{equation*}
Hence $\hg:\,H^p\to H^p$ is bounded. This finishes the proof.
 \end{Pf}
\par\medskip
  \begin{Pf}{\em{Theorem \ref{th:hardy2pinfty}~(ii).}}
\par Let  $2<p<\infty$ and $g\in \Lambda (q,\frac{1}{q})$ for some
$q$ with $1<q<p$. Let $f\in H^p$. Applying \cite[Corollary
$3.1$]{MP} to the analytic function $\hg(f)$ we have,
$$
\n{\hg(f)}^p_{H^p}\le C\left(|\hg(f)(0)|^p+\int_0^1(1-r)^{p\left(1-\frac{1}{q}\right)}M^p_q(r,\hg(f)')\,dr\right)
$$
where $C=C(p,q)$ is an absolute constant. By Theorem~\ref{th:dec}~(i), applied here with $\a=1-\frac{1}{q}+\frac{1}{p}$ we further have
\begin{equation}\label{asymp}
\begin{split}
\int_0^1(1-r)^{p\left(1-\frac{1}{q}\right)}&M^p_q(r,\hg(f)')\,dr\\
&\asymp
|\hg(f)'(0)|^p+\sum_{n=0}^{\infty}2^{-n(p-\frac{p}{q}+1)}\n{\Delta_n\hg(f)'}_{H^q}^p.
\end{split}
\end{equation}
Now for the constant terms of the two relations above it is easy to see, using H\"older's
inequality and the Fejer-Riesz inequality that
\begin{equation}\label{con}
|\hg(f)(0)|^p+|\hg(f)'(0)|^p\leq C(g, p)\n{f}_{H^p}^p.
\end{equation}
To estimate the sum in  (\ref{asymp})  write $g''(z)=\sum_{k=0}^{\infty}c_kz^k$ so that
$$
\hg(f)'(z)= \hgp(zf)(z)=\sum_{k=0}^{\infty}\left(c_k\int_0^1t^{k+1}f(t)\,dt\right)z^k,
$$
and use  Lemma~\ref{le:3} and Theorem~\ref{th:mlip}~(v) to obtain
\begin{align*}
\sum_{n=3}^{\infty} 2^{-n(p(1-\frac{1}{q})+1)} &\n{\Delta_n\hg(f)'}_{H^q}^p\\
&\leq C\sum_{n=3}^{\infty}2^{-n(p-\frac{p}{q}+1)}\left(\int_0^1t^{2^{n-2}+1}|f(t)|\,dt\right)^p
\n{\Delta_n g''}^p_{H^q}\\
&\leq C\sum_{n=3}^{\infty}2^{n(p-1)}\left(\int_0^1t^{2^{n-2}+1}|f(t)|\,dt\right)^p\\
&\leq  C\sum_{n=0}^{\infty}2^{(n+1)(p-1)}\left(\int_0^1t^{2^{n+1}+1}|f(t)|\,dt\right)^p\\
&\leq C\sum_{n=0}^{\infty}2^{(n+1)(p-2)}\sum_{j\in I(n)}\left(\int_0^1t^{j+1}|f(t)|\,dt\right)^p\\
&\asymp C\sum_{j=0}^{\infty}(j+1)^{p-2}\left(\int_0^1t^{j+1}|f(t)|\,dt\right)^p\\
&\asymp \n{\hti(f)}_{H^p}^p \\
&\leq C \n{f}_{H^p}^p ,
\end{align*}
where in the  last two lines we have used (\ref{eq:equivhpdp}) and Theorem~\ref{th:hti}~(ii).
This and (\ref{con}) finish the proof.
\end{Pf}
%%%%%%%%%%%%%%%%%%%%%%%%%%%%%%%%%%%%%%%%%%%%%%%%%%%%%%%%%%%%%%%%%%%%
\section{Compactness}\label{compactness}
Let us recall that an operator $T$ acting on a Banach space $X$ is
compact if any bounded sequence $\{f_k\}$ of elements of $X$ has a
subsequence $\{f_{k_i}\}$ such that $T(f_{k_i})$ converges in $X$.
For the generalized Hilbert operator  $\hg$ acting on the
appropriate spaces we have.
\begin{theorem}\label{th:hardyc}
Suppose that $1<p<\infty$ and  $g\in \hol(\D)$, then
\par (i)\, If $\hg:H^p\to H^p$ is compact  then $g\in \lambda(p,\frac{1}{p})$.
\par (ii)\, If $1<p\le 2$ and $g\in \lambda(p,\frac{1}{p})$, then $\hg:H^p\to H^p$ is compact.
\par (iii)\, If $2<p<\infty$ and $g\in \lambda(q,\frac{1}{q})$ for some $1<q<p$, then $\hg:H^p\to H^p$ is compact.
\end{theorem}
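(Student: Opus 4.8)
The plan is to obtain the necessity statement (i) and the sufficiency statements (ii)--(iii) by upgrading the corresponding boundedness results---Theorem~\ref{th:hardy.nec} and the sufficiency halves of Theorems~\ref{th:hardy1p2} and~\ref{th:hardy2pinfty}---through the standard compactness mechanisms.

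\textbf{Necessity.} I would reuse verbatim the apparatus from the proof of Theorem~\ref{th:hardy.nec}: the test functions
$$
f_N(z)=\frac{1}{N^{2-\frac1p}}\frac{1}{(1-a_Nz)^2},\qquad a_N=1-\tfrac1N,
$$
and the cut-off functions $\Phi_N$ satisfying (\ref{eq:nh1})--(\ref{eq:nh2-1}). The single new ingredient is that $\{f_N\}$ converges weakly to $0$ in $H^p$. Indeed $\{f_N\}$ is norm bounded in $H^p$; since $2-\frac1p>0$, every Taylor coefficient of $f_N$ tends to $0$ and $f_N\to0$ uniformly on compact subsets of $\D$. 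As $p>1$ and the polynomials are dense in the dual $H^q$, a norm bounded sequence with vanishing coefficients must tend weakly to $0$. A compact operator carries weakly null sequences to norm null sequences, so the compactness of $\hg$ forces $\n{\hg(f_N)}_{H^p}=\ep_N\to0$. Carrying this $\ep_N$ through the estimate of Theorem~\ref{th:hardy.nec}, part (iii) of Theorem~\ref{th:cesaro} and (\ref{eq:nh2-1}) give
$$
\n{W_N^{\Phi_N}\ast\hg(f_N)}_{H^p}\le C_p N^{1-\frac1p}\ep_N,
$$
the M.~Riesz projection theorem isolates $F_2^N$ exactly as before, and the choice $N=2^n$ yields $\n{\Delta_n g'}_{H^p}\le C_p 2^{n(1-\frac1p)}\ep_{2^n}=\ope\left(2^{n(1-\frac1p)}\right)$. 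By the little-oh analogue of part (iv) of Theorem~\ref{th:mlip} (Remark~\ref{re:mlip}), this says precisely that $g\in\lambda\left(p,\frac1p\right)$.

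\textbf{Sufficiency.} I would combine three facts. First, the sufficiency halves of Theorems~\ref{th:hardy1p2} and~\ref{th:hardy2pinfty} are in fact quantitative: tracing their proofs gives $\n{\hg}_{H^p\to H^p}\le C\,\n{g}_{\Lambda(p,1/p)}$ when $1<p\le2$, and $\n{\hg}_{H^p\to H^p}\le C\,\n{g}_{\Lambda(q,1/q)}$ when $2<p<\infty$ and $1<q<p$, the mean Lipschitz norm entering through characterization (v) of Theorem~\ref{th:mlip} together with the coefficient estimates $|g'(0)|,|g''(0)|\lesssim\n{g}_{H^p}$. Second, if $g$ is a polynomial then, by (\ref{Hgcoef}), the range of $\hg$ lies in the polynomials of bounded degree, so $\hg$ has finite rank and is compact. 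Third, the little-oh space is the closure of the polynomials inside the big-oh mean Lipschitz space: with $g_N=\sum_{n=0}^N\Delta_n g$, characterization (iii) of Theorem~\ref{th:mlip} gives $\n{g-g_N}_{\Lambda(s,1/s)}\asymp\sup_{n>N}2^{n/s}\n{\Delta_n g}_{H^s}\to0$ precisely because $\n{\Delta_n g}_{H^s}=\ope\left(2^{-n/s}\right)$ for $g\in\lambda\left(s,\frac1s\right)$.

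Granting these, (ii) and (iii) follow simultaneously. For (ii), take $g\in\lambda\left(p,\frac1p\right)$ and polynomials $g_N\to g$ in $\Lambda\left(p,\frac1p\right)$; each $\mathcal{H}_{g_N}$ is compact, and by the linearity of $g\mapsto\hg$ and the quantitative bound,
$$
\n{\hg-\mathcal{H}_{g_N}}_{H^p\to H^p}=\n{\mathcal{H}_{g-g_N}}_{H^p\to H^p}\le C\,\n{g-g_N}_{\Lambda(p,1/p)}\longrightarrow0,
$$
so $\hg$ is an operator-norm limit of compact operators and is therefore compact; part (iii) is identical with $\Lambda\left(q,\frac1q\right)$ replacing $\Lambda\left(p,\frac1p\right)$. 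The one delicate point is the bookkeeping of the constant terms: for $N\ge1$ the truncation $g_N$ already contains $b_1$ and $b_2$, whence $(g-g_N)'(0)=(g-g_N)''(0)=0$ and the additive $|g'(0)|,|g''(0)|$ contributions disappear, leaving a clean domination by $\n{g-g_N}_\Lambda$. I expect the principal thing to verify to be exactly this---that the boundedness proofs really furnish an operator-norm bound controlled solely by the mean Lipschitz norm; once that is secured, the remainder is a routine approximation argument.
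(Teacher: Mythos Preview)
Your necessity argument is essentially the paper's own: the paper feeds the same family $\{f_N\}$ into the characterization of compactness (Lemma~\ref{le:c12}), obtains $\n{\hg(f_N)}_{H^p}\to0$, and then traces through the boundedness proof exactly as you describe to get the little-oh bound on $\n{\Delta_n g'}_{H^p}$. The only cosmetic difference is that you phrase the first step via weak convergence and the general fact that compact operators take weakly null sequences to norm null ones, whereas the paper packages the same thing as Lemma~\ref{le:c12}.

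Your sufficiency argument, however, is genuinely different from the paper's. The paper (carrying over the template from the proof of Theorem~\ref{th:dirichletc}) works directly with the sequential criterion: given $\{f_k\}$ bounded in $H^p$ and converging to $0$ uniformly on compacta, it splits the dyadic sum for $\n{\hg(f_k)}$ at a level $n_0$ chosen so that $2^{-n(2-1/p)}\n{\Delta_n g''}_{H^q}<\ep$ for $n\ge n_0$, handles the finite head via Lemma~\ref{le:c1}, and bounds the tail by $C\ep\,\n{\hti(f_k)}\le C\ep K$. You instead approximate $g$ in the $\Lambda$-seminorm by the dyadic partial sums $g_N$, observe that each $\mathcal H_{g_N}$ has finite rank, and pass to the operator-norm limit using a quantitative reading of the boundedness proofs. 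Both routes rest on the same quantitative content---that the boundedness argument yields a constant controlled by $\sup_n 2^{-n(2-1/s)}\n{\Delta_n g''}_{H^s}$ together with $|g'(0)|,|g''(0)|$---so the verification you flag as ``the principal thing'' is exactly what the paper's $\ep$-splitting is also exploiting. Your approach is cleaner and more structural (and gives finite-rank approximation for free); the paper's is more self-contained in that it does not require naming a norm on $\Lambda\left(s,\frac1s\right)$ or invoking density of polynomials in $\lambda\left(s,\frac1s\right)$.
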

\begin{theorem}\label{th:bergmanc}
Suppose that $1<p<\infty$, $-1<\alpha<p-2$ and  $g\in \hol(\D)$. Then
 $\hg:A^p_\alpha\to A^p_\alpha$ is compact  if
 and only if $g\in \lambda\left(p,\frac{1}{p}\right)$.
\end{theorem}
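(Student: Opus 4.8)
The plan is to prove both implications by running the compactness analogues of the two halves of Theorem~\ref{th:bergman}, with the mean Lipschitz space $\Lambda(p,\frac1p)$ replaced by its little-oh counterpart $\lambda(p,\frac1p)$ and every $\og$-estimate upgraded to an $\ope$-estimate. I would rely throughout on the standard criterion that, for $1<p<\infty$, a bounded operator $T$ on $A^p_\alpha$ is compact if and only if $\n{Tf_k}_{A^p_\alpha}\to 0$ whenever $\{f_k\}$ is bounded in $A^p_\alpha$ and $f_k\to 0$ uniformly on compact subsets of $\D$. This criterion is available because $A^p_\alpha$ is reflexive for $1<p<\infty$ and point evaluations are bounded, so any such sequence is weakly null, and compact operators send weakly null sequences to norm-null ones.

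For necessity, suppose $\hg$ is compact. I would reuse the normalized test functions $f_N(z)=c_N(1-a_Nz)^{-2}$, $a_N=1-\frac1N$, from the necessity proof of Theorem~\ref{th:bergman}. They form a bounded set in $A^p_\alpha$ by \cite[Lemma 3.10]{Zhu} and, since $a_N\to 1$, converge to $0$ uniformly on compact subsets of $\D$; hence $\n{\hg(f_N)}_{A^p_\alpha}\to 0$. I would then repeat the block estimate of that proof verbatim — applying the smooth multipliers $W_{2^n}^{\Phi_{2^n,\a}}$ built from Lemma~\ref{le:1}, Theorem~\ref{th:cesaro}(iv), the M.~Riesz projection theorem and Lemma~\ref{le:eqno} (and the identity $A^p_\alpha=\mathcal{D}^p_{p+\alpha}$) — but now carrying the vanishing factor $\n{\hg(f_{2^n})}_{A^p_\alpha}=\ope(1)$ in place of the uniform bound $\sup_N\n{\hg(f_N)}_{A^p_\alpha}<\infty$ used there. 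This turns the conclusion $\n{\Delta_n g''}_{H^p}\le C2^{n(2-\frac1p)}$ into $\n{\Delta_n g''}_{H^p}=\ope\!\left(2^{n(2-\frac1p)}\right)$, which by the little-oh form of Theorem~\ref{th:mlip}(v) recorded in Remark~\ref{re:mlip} yields $g\in\lambda(p,\frac1p)$.

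For sufficiency, assume $g\in\lambda(p,\frac1p)$ and take a bounded sequence $\{f_k\}\subset A^p_\alpha$ with $f_k\to 0$ uniformly on compacts; I must show $\n{\hg(f_k)}_{A^p_\alpha}\to 0$. Writing $g'(z)=\sum c_kz^k$ and using Theorem~\ref{th:dec}(ii) with the variant of Lemma~\ref{le:3} employed in the sufficiency proof of Theorem~\ref{th:bergman}, I obtain $\n{\hg(f_k)}^p_{A^p_\alpha}\le C|\hg(f_k)(0)|^p+C\sum_n 2^{-n(\alpha+1)}\left(\int_0^1 t^{2^{n-2}}|f_k(t)|\,dt\right)^p\n{\Delta_n g'}^p_{H^p}$. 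Given $\ep>0$, since $\n{\Delta_n g'}_{H^p}=\ope(2^{n(1-\frac1p)})$ I choose $M$ so large that the tail over $n>M$ is dominated by $C\ep\,\n{\hti(f_k)}^p_{A^p_\alpha}\le C\ep\n{f_k}^p_{A^p_\alpha}\le C\ep$, using Lemma~\ref{le:htidec}(ii) together with Theorem~\ref{th:hti}(ii). For the finitely many remaining terms $n\le M$, and for $\hg(f_k)(0)=g'(0)\int_0^1 f_k$, I would show $\int_0^1 t^{2^{n-2}}|f_k(t)|\,dt\to 0$ as $k\to\infty$: on $[0,1-\delta]$ this follows from uniform convergence on compacts, while the contribution of $[1-\delta,1]$ is made uniformly small (in $k$) by H\"older's inequality and Lemma~\ref{le:hti1}(ii). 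Letting first $k\to\infty$ and then $\ep\to 0$ gives $\n{\hg(f_k)}_{A^p_\alpha}\to 0$, hence compactness.

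The main obstacle is the boundary behaviour at $t=1$ in the sufficiency step: controlling $\int_{1-\delta}^1|f_k(t)|\,dt$ uniformly in $k$ requires integrability of $(1-t)^{-(\alpha+1)/(p-1)}$ near $1$, which holds exactly under the hypothesis $\alpha<p-2$ — the same restriction that forces $A^p_\alpha\subset A^1_{[0,1)}$ in \eqref{eq:apaa1} and makes $\hg$ well defined. Ensuring that the tail-in-$n$ estimate is genuinely uniform in $k$, so that the two limiting processes in $\ep$ and $k$ decouple cleanly, is the point demanding the most care; by contrast, the necessity direction is essentially a mechanical transcription of the bounded case once the weak nullity of the test family is established.
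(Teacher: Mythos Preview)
Your proposal is correct and mirrors the paper's own approach: the paper proves Theorem~\ref{th:dirichletc} by exactly this ``upgrade $\og$ to $\ope$'' scheme (tracking $\n{\hg(f_N)}$ through the necessity argument, and splitting the block sum into a finite head plus an $\ep$-small tail for sufficiency) and then states that Theorem~\ref{th:bergmanc} ``can be proved with the same technique.'' Your treatment of $\int_0^1|f_k(t)|\,dt\to 0$ via H\"older and Lemma~\ref{le:hti1}(ii) is precisely the content of the paper's Lemma~\ref{le:c1}(i), and your compactness criterion is the paper's Lemma~\ref{le:c12}.
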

\begin{theorem}\label{th:dirichletc}
Suppose that $1<p<\infty$, $p-2<\alpha\le p-1$ and  $g\in \hol(\D)$. Then
 $\hg:\Dpa\to \Dpa$ is compact  if
 and only if $g\in \lambda\left(p,\frac{1}{p}\right)$.
\end{theorem}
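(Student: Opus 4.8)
The plan is to prove both implications by adapting the proofs of Theorem~\ref{th:dirichlet}, replacing throughout the space $\Lambda(p,\frac1p)$ by its little-oh analogue $\lambda(p,\frac1p)$, and exploiting that $\Dpa$ is reflexive for $1<p<\infty$ (it embeds isometrically as a closed subspace of $\C\oplus A^p_\a$ via $f\mapsto(f(0),f')$, and weighted Bergman spaces are reflexive). On such a space an operator is compact precisely when it maps every weakly null sequence to a norm null one, and a bounded sequence tending to $0$ uniformly on compact subsets of $\D$ is automatically weakly null, since point evaluations are continuous and so $0$ is the only possible weak cluster point.

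For the \emph{necessity}, I would recycle the test functions $f_N(z)=N^{-(3-\frac{2+\a}{p})}(1-a_Nz)^{-2}$ from the necessity part of Theorem~\ref{th:dirichlet}. They are bounded in $\Dpa$ and, since $3-\frac{2+\a}{p}\ge 2-\frac1p>0$ for $p-2<\a\le p-1$, they tend to $0$ uniformly on compacta; hence $f_N\to0$ weakly and compactness forces $\ep_N\ig\n{\hg(f_N)}_{\Dpa}\to0$. Running the \emph{same} chain of estimates as before — the identity $\hg(f)'=\hgp(zf)$, part~(iv) of Theorem~\ref{th:cesaro}, the Riesz projection and Lemma~\ref{le:eqno} — but now carrying the factor $\ep_N$ in place of a uniform bound, yields with $N=2^n$
\begin{equation*}
\n{\Delta_n g''}_{H^p}\le C\,\ep_{2^n}\,2^{n(2-\frac1p)},
\end{equation*}
so that $\n{\Delta_n g''}_{H^p}=\op(2^{n(2-\frac1p)})$. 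By the little-oh version of part~(v) of Theorem~\ref{th:mlip} (Remark~\ref{re:mlip}) this is exactly $g\in\lambda(p,\frac1p)$.

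For the \emph{sufficiency}, assume $g\in\lambda(p,\frac1p)$ and let $\{f_j\}$ be weakly null in $\Dpa$ (bounded, $f_j\to0$ locally uniformly); I must show $\n{\hg(f_j)}_{\Dpa}\to0$. Starting from the block decomposition of the sufficiency proof of Theorem~\ref{th:dirichlet},
\begin{equation*}
\n{\hg(f_j)}_{\Dpa}^p\lesssim |\hg(f_j)(0)|^p+|\hgp(zf_j)(0)|^p+\sum_{n\ge3}2^{-n(\a+1)}\n{\Delta_n\hgp(zf_j)}_{H^p}^p,
\end{equation*}
the two constant terms are controlled by $\bigl(\int_0^1|f_j(t)|\,dt\bigr)^p$, which tends to $0$: splitting $\int_0^1=\int_0^r+\int_r^1$, the first piece vanishes by local uniform convergence and the second is uniformly small in $j$ by Lemma~\ref{le:hti2} (recall $-1<\a-p+1\le0$). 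For the series, fix $\ep>0$ and use $g\in\lambda(p,\frac1p)$ to pick $N_0$ with $\n{\Delta_n g''}_{H^p}\le\ep\,2^{n(2-\frac1p)}$ for $n\ge N_0$. Applying Lemma~\ref{le:3} and then, on the tail, Lemma~\ref{le:htidec}~(i) together with Theorem~\ref{th:hti}~(iii) exactly as in the bounded case gives
\begin{equation*}
\sum_{n\ge N_0}2^{-n(\a+1)}\n{\Delta_n\hgp(zf_j)}_{H^p}^p\le C\ep^p\,\n{\hti(f_j)}_{\Dpa}^p\le C\ep^p,
\end{equation*}
uniformly in $j$; the finitely many remaining terms ($3\le n<N_0$) are polynomials whose coefficients are the continuous functionals $\int_0^1 t^{k+1}f_j(t)\,dt\to0$, so $\sum_{3\le n<N_0}\to0$ as $j\to\infty$. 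Letting $j\to\infty$ and then $\ep\to0$ yields $\n{\hg(f_j)}_{\Dpa}\to0$.

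The main obstacle is the interplay of the two limiting processes in the sufficiency step: the tail smallness comes from the little-oh decay of $g$ and must hold \emph{uniformly in $j$}, while the low-frequency blocks and the constant terms must be shown to vanish \emph{as $j\to\infty$} for each fixed cutoff $N_0$; verifying that every constant produced by Lemma~\ref{le:3}, Lemma~\ref{le:htidec} and Theorem~\ref{th:hti}~(iii) is independent of $g$ on the tail and independent of $j$ is the delicate point. An equivalent route that sidesteps the weak-convergence bookkeeping is to approximate $g$ in the $\Lambda(p,\frac1p)$ seminorm by its smooth de~la~Vall\'ee~Poussin polynomials $g_n$ (available from Theorem~\ref{th:cesaro}); each $\mathcal{H}_{g_n}$ is finite rank because $\hg(f)=\sum_k(k+1)b_{k+1}\bigl(\int_0^1t^kf(t)\,dt\bigr)z^k$ is a polynomial in $z$ when $g$ is, and the boundedness estimate of Theorem~\ref{th:dirichlet} gives $\n{\hg-\mathcal{H}_{g_n}}_{\Dpa\to\Dpa}=\n{\mathcal{H}_{g-g_n}}\le C\n{g-g_n}_{\Lambda(p,\frac1p)}\to0$, exhibiting $\hg$ as a norm limit of finite-rank operators.
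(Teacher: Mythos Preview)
Your main argument is essentially the paper's proof: for necessity you carry the quantity $\ep_N=\n{\hg(f_N)}_{\Dpa}$ through the chain of estimates from the necessity part of Theorem~\ref{th:dirichlet} to obtain $\n{\Delta_n g''}_{H^p}=\op(2^{n(2-1/p)})$, and for sufficiency you split the block sum at a cutoff $N_0$ given by the little-oh hypothesis, bounding the tail via Lemma~\ref{le:htidec}(i) and Theorem~\ref{th:hti}(iii) and sending the finite head to zero via $\int_0^1|f_j|\,dt\to0$. The paper packages the sequential criterion as Lemma~\ref{le:c12} (built on Lemma~\ref{le:c1}) rather than through reflexivity and weak convergence, but the content is identical.

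Your closing alternative via finite-rank approximation is not in the paper and is a genuinely different (and tidier) route for sufficiency: the sufficiency proof of Theorem~\ref{th:dirichlet} does yield $\n{\mathcal H_h}_{\Dpa\to\Dpa}\lesssim \n{h}_{\Lambda(p,1/p)}$, and since $\lambda(p,\frac1p)$ is the closure of polynomials in $\Lambda(p,\frac1p)$ one gets $\hg$ as a norm limit of the finite-rank operators $\mathcal H_{g_n}$. This bypasses the $\ep$--$N_0$ splitting entirely, at the cost of needing the operator-norm bound in quantitative form; the paper's approach instead recycles the qualitative structure of the boundedness proof and handles the two limits $j\to\infty$, $\ep\to0$ by hand.
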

\par We shall use the following lemma.
\begin{lemma}\label{le:c1}
Suppose that $1<p<\infty$ and let $X$ be either $H^p$, or
$A^p_\alpha $ for some $\alpha $ with $-1<\alpha <p-2$, or $\mathcal
D^p_\alpha $ for some $\alpha $ with $p-2<\alpha \le p-1$. Let
$\{f_k\} _{k=1}^\infty $ be a sequence in $X$
 satisfying
$\sup_{k}\n{f_k}_{X}=K<\infty$ and $f_k\to 0$, as $k\to\infty $,
uniformly on compact subsets of\, $\D$. Then:
\begin{itemize}\item[(i)]
$\lim_{k\to\infty}\int_0^1 |f_k(t)|\,dt=0.$
\item[(ii)] For every $g\in \hol (\D )$ we have
$$\mathcal H_g(f_k)\to 0,\,\,\,\text{ as $k\to\infty $, uniformly on
compact subsets of \,$\D $.}$$
\end{itemize}
\end{lemma}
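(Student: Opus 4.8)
The plan is to establish part (i) first and then obtain part (ii) as an immediate consequence. For (i) I would split the integral $\int_0^1|f_k(t)|\,dt$ at a point $\rho$ close to $1$: the piece $\int_0^\rho$ is controlled by the hypothesis that $f_k\to0$ uniformly on the compact set $[0,\rho]$, while the tail $\int_\rho^1$ is handled, uniformly in $k$, by the uniform norm bound $\sup_k\n{f_k}_{X}=K$ together with the weighted $M_\infty$-estimates of Lemmas~\ref{le:hti1} and \ref{le:hti2}.

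The unifying point is that each admissible choice of $X$ carries a weighted bound of the form
\[
\int_0^1 M_\infty^p(t,f)(1-t)^s\,dt\le C\n{f}_{X}^p,
\]
with $s=0$ for $X=H^p$ (Lemma~\ref{le:hti1}(i)), $s=\alpha+1$ for $X=A^p_\alpha$ (Lemma~\ref{le:hti1}(ii)), and $s=\alpha-p+1$ for $X=\mathcal D^p_\alpha$ (Lemma~\ref{le:hti2}). In every case the hypotheses on $\alpha$ force $-1<s<p-1$: the inequality $s=\alpha+1<p-1$ is exactly the Bergman restriction $\alpha<p-2$, whereas $s=\alpha-p+1\le0$ in the Dirichlet case. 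Since $|f_k(t)|\le M_\infty(t,f_k)$, an application of H\"older's inequality (with $\tfrac1p+\tfrac1q=1$) and the splitting weight $(1-t)^{s/p}$ gives
\[
\int_\rho^1|f_k(t)|\,dt\le\left(\int_0^1 M_\infty^p(t,f_k)(1-t)^s\,dt\right)^{1/p}\left(\int_\rho^1(1-t)^{-sq/p}\,dt\right)^{1/q}\le C^{1/p}K\left(\int_\rho^1(1-t)^{-sq/p}\,dt\right)^{1/q}.
\]
The condition $s<p-1$ is precisely $sq/p<1$, so the exponent $-sq/p$ exceeds $-1$ and the last integral tends to $0$ as $\rho\to1^-$, uniformly in $k$. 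Thus, given $\varepsilon>0$, I would first fix such a $\rho$ making the tail smaller than $\varepsilon/2$ for all $k$; then, since $f_k\to0$ uniformly on $[0,\rho]$, I obtain $\int_0^\rho|f_k(t)|\,dt<\varepsilon/2$ for all large $k$, which yields (i).

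Part (ii) follows directly. Fix $r\in(0,1)$; for $|z|\le r$ and $t\in[0,1]$ the point $tz$ lies in the closed disc of radius $r$, so $|g'(tz)|\le M_\infty(r,g')<\infty$ because $g'\in\hol(\D)$. Hence
\[
\sup_{|z|\le r}\left|\mathcal H_g(f_k)(z)\right|=\sup_{|z|\le r}\left|\int_0^1 f_k(t)g'(tz)\,dt\right|\le M_\infty(r,g')\int_0^1|f_k(t)|\,dt,
\]
which tends to $0$ by part (i); since every compact subset of $\D$ sits inside some such disc, $\mathcal H_g(f_k)\to0$ uniformly on compact subsets. The only delicate step is (i), and within it the verification that $s<p-1$ in all three cases so that the complementary weight $(1-t)^{-sq/p}$ is integrable near $1$; this is exactly where the restriction $\alpha<p-2$ in the Bergman case is needed.
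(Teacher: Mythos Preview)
Your proof is correct and, for $X=H^p$ and $X=A^p_\alpha$, follows exactly the paper's argument: split at $\rho$, use uniform convergence on $[0,\rho]$, and control the tail by H\"older against the weighted $M_\infty$-bound of Lemma~\ref{le:hti1}. Part~(ii) is likewise identical.

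The one genuine difference is the Dirichlet case. You treat $X=\mathcal D^p_\alpha$ directly, invoking Lemma~\ref{le:hti2} to get the weighted estimate with exponent $s=\alpha-p+1\in(-1,0]$ and then running the same H\"older splitting. The paper instead argues indirectly: since $\alpha-p\le -1$ one has $\mathcal D^p_\alpha\subset A^p_\beta$ for every $\beta>-1$, so choosing any $\beta\in(-1,p-2)$ and appealing to the closed graph theorem reduces the Dirichlet case to the already-proved Bergman case. Your route is cleaner and more uniform---it avoids the embedding and the closed graph theorem and handles all three spaces by a single computation---while the paper's reduction has the virtue of not needing to re-run the H\"older estimate. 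Both are short; yours makes the role of the constraint $s<p-1$ (equivalently $-sq/p>-1$) more transparent across all cases.
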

\begin{pf} Let's start with the proof of (i).
Let $q$ be the exponent conjugate to $p$, that is,
$\frac{1}{p}+\frac{1}{q}=1$. Take $\ep>0$. \par Suppose first that
$X=H^p$. Take $r_0\in (0,1)$ such that $(1-r_0)^{1/q}<\ep $. By the
hypothesis  there exists $k_0\in\N$ such that
$$
|f_k(z)|<\ep,\,\,\quad\text{if  $k\ge k_0$ and $\vert z\vert \le
r_0$}.
$$
Then, using H\"{o}lder's inequality and part\,\@(i) of
Lemma\,\@\ref{le:hti1}, we see that for $k\ge k_0$, we have
\begin{eqnarray*}
\int_0^1\vert f_k(t)\vert \,dt & \le & \ep + \int_{r_0}^1M_\infty
(t, f_k)\,dt
\\ & \le & \ep + \left ( \int_{r_0}^1M_\infty ^p
(t, f_k)\,dt\right )^{1/p}(1-r_0)^{1/q}
\\ & \le & \ep +CK\ep = C^\prime \ep .
\end{eqnarray*} Thus (i) holds in this case.
\par Similarly, if $X=A^p_\alpha $ with $-1<\alpha <p-2$,
take  $r_0\in (0,1)$ such that $(1-r_0)^{\frac{p-\a-2}{p}}<\ep$.
There exists $k_0\in\N$ such that
$$
|f_k(z)|<\ep,\,\,\quad\text{if  $k\ge k_0$ and $\vert z\vert \le
r_0$}.
$$
Then, using H\"{o}lder's inequality and part\,\@(ii) of
Lemma\,\@\ref{le:hti1}, we obtain, for $k\ge k_0$,
\begin{eqnarray*}
\int_0^1 |f_k(t)|\,dt & \le & \ep+\int_{r_0}^1 |f_k(t)|\,dt\\
& \le & \ep+\left(\int_{r_0}^1
M_{\infty}^p(t,f_k)(1-t)^{\a+1}\,dt\right)^{\frac{1}{p}}
\left(\int_{r_0}^1 (1-t)^{-(\a+1)\frac{q}{p}}\,dt\right)^{\frac{1}{q}}\\
&\leq & \ep+K\frac{p}{p-\a-2}(1-r_0)^{\frac{p-\a-2}{p}}\leq
C^\prime\ep.
\end{eqnarray*}
So, we see that (i) holds in this case too.
\par Finally, suppose that $X=\mathcal D^p_\alpha $ for a certain $\alpha $ with  $p-2<\alpha \le
p-1$. Since $\alpha -p\le -1$, we have that $\mathcal D^p_\alpha
\subset A^p_\beta $ for all $\beta >-1$. Take and fix $\beta $ with
$-1<\beta <p-2$. We have $X\subset A^p_\beta $ and then, using the
hypothesis and the closed graph theorem, we deduce that
$\sup_{k}\Vert f\Vert _{A^p_\beta }<\infty $ and then the result in
this case follows from the preceding one.
\par\medskip
Part\,\@(ii) follows easily from part\,\@(i). Indeed, if $g\in \hol
(\D )$ and $\vert z\vert \le r<1$, we have
$$\vert \mathcal H_g(f_k)(z)\vert =\left \vert
\int_0^1f_k(t)g^\prime (tz)\,dt\right \vert \le M_\infty (r,g^\prime
)\int _0^1\vert f_k(t)\vert \,dt. $$ Thus (ii) holds.
\end{pf}
\par\medskip Now the following result follows easily.
\begin{lemma}\label{le:c12} Suppose that $1<p<\infty$ and let $X$ be either $H^p$, or
$A^p_\alpha $ for some $\alpha $ with $-1<\alpha <p-2$, or $\mathcal
D^p_\alpha $ for some $\alpha $ with $p-2<\alpha \le p-1$. For a
function $g\in \hol(\D)$ the following conditions are
equivalent:\begin{itemize}\item [(i)] $\hg: X\to X$ is compact.
\item[(ii)] If  $\{f_k\}_{k=1}^\infty $ is a sequence in $X$ such that
 \begin{equation}\label{eq:com1}
\sup_{k}||f_k||_X=K <\infty
 \end{equation}
 and
 \begin{equation}\label{eq:com2}
f_k\to 0, \quad\text{as $k\to\infty $, uniformly on compact subsets
of $\D$},
\end{equation}
then  $\lim_{k\to\infty}||\mathcal H_g(f_k)||_{X}=0$.
\end{itemize}
\end{lemma}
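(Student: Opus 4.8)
The plan is to prove the two implications separately, relying on two structural properties shared by $H^p$, $A^p_\alpha$ and $\Dpa$ in the indicated parameter ranges. First, norm-bounded subsets of $X$ are \emph{normal families}: the $X$-norm dominates the point evaluations $f\mapsto f(z)$ (and, in the Dirichlet case, also $f\mapsto f'(z)$) uniformly for $z$ in any compact subset of $\D$, so a norm bound yields a locally uniform bound and Montel's theorem applies. Second, these same estimates show that $X$-norm convergence forces uniform convergence on compact subsets of $\D$. I would also record the routine Fatou-type fact that the $X$-norm is lower semicontinuous with respect to uniform convergence on compact sets, so that such limits of norm-bounded sequences again lie in $X$ with a controlled norm.

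For the implication (i) $\Rightarrow$ (ii), suppose $\hg\colon X\to X$ is compact and let $\{f_k\}$ satisfy (\ref{eq:com1}) and (\ref{eq:com2}). Arguing by contradiction, if $\|\hg(f_k)\|_X\not\to 0$ I would pass to a subsequence with $\|\hg(f_{k_j})\|_X\ge\delta>0$. Compactness then yields a further subsequence along which $\hg(f_{k_{j_l}})$ converges in $X$ to some $h\in X$; by the second structural property this convergence is also uniform on compact subsets, so $h$ is the locally uniform limit of $\hg(f_{k_{j_l}})$. But Lemma~\ref{le:c1}(ii) gives that $\hg(f_{k_{j_l}})\to 0$ uniformly on compact subsets of $\D$, whence $h\equiv 0$ and therefore $\|\hg(f_{k_{j_l}})\|_X\to 0$, contradicting the lower bound $\delta$. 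Hence $\|\hg(f_k)\|_X\to 0$.

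For (ii) $\Rightarrow$ (i), let $\{f_k\}$ be any sequence in $X$ with $\|f_k\|_X\le M$. By the normal-family property and Montel's theorem I would extract a subsequence $\{f_{k_i}\}$ converging uniformly on compact subsets to some $f\in\hol(\D)$, and by lower semicontinuity $f\in X$ with $\|f\|_X\le M$. Then $\{f_{k_i}-f\}$ is norm-bounded by $2M$ and tends to $0$ uniformly on compact subsets, so hypothesis (ii) gives $\|\hg(f_{k_i}-f)\|_X\to 0$; since $\hg$ is linear and $\hg(f)\in X$, this says $\hg(f_{k_i})\to\hg(f)$ in $X$. Thus every norm-bounded sequence has a subsequence whose image converges in $X$, i.e.\ $\hg$ is compact.

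I expect the only delicate point to be the bookkeeping around the limit function $f$ in the second implication. One must know that $\hg$ is already a bounded operator on $X$ (so that $\hg(f)\in X$ for the loc-uniform limit $f$), which is precisely the setting in which this lemma will be applied, and one must invoke the lower semicontinuity of the norm to place $f$ in $X$ with the correct bound. Everything else is the standard normal-families argument, and the genuinely analytic input — that $\hg(f_k)\to 0$ uniformly on compact sets whenever $f_k\to 0$ uniformly on compact sets — has already been provided by Lemma~\ref{le:c1}.
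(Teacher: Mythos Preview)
Your argument is correct and is exactly the standard normal-families route the paper has in mind; in fact the paper gives no detailed proof, simply remarking that the lemma ``follows easily'' from Lemma~\ref{le:c1}.

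One small comment on the point you yourself flag. In the implication (ii)~$\Rightarrow$~(i) you need $\hg(f)\in X$ for the locally-uniform limit $f$, and you appeal to the ambient assumption that $\hg$ is bounded. This is fine in practice, but you can also make the lemma self-contained in either of two ways. First, (ii) already forces boundedness: if $\hg$ were unbounded, pick $f_k$ with $\|f_k\|_X\le 1$ and $\|\hg(f_k)\|_X\to\infty$ (or $=\infty$), and apply (ii) to $f_k/\max(1,\|\hg(f_k)\|_X^{1/2})$ to reach a contradiction. Second, you can sidestep the issue entirely by showing directly that $\{\hg(f_{k_i})\}$ is Cauchy in $X$: if not, choose $i_m<j_m$ with $\|\hg(f_{k_{i_m}}-f_{k_{j_m}})\|_X\ge\varepsilon$, note that $g_m:=f_{k_{i_m}}-f_{k_{j_m}}$ is bounded in $X$ and tends to $0$ uniformly on compacta, and invoke (ii). Either tweak removes the need to assume boundedness in advance.
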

\par
\begin{Pf}{\em{Theorem \ref{th:dirichletc}}}
 Assume first that  $\hg:\Dpa\to \Dpa$ is compact. Since the family of test functions
$$
f_{N,\alpha}(z)=\frac{1}{N^{3-\frac{2+\a}{p}}}\frac{1}{(1-a_Nz)^2},\quad z\in\D
$$
considered in (\ref{eq:testfuncdpa}) satisfies (\ref{eq:com1}) and (\ref{eq:com2}), we have
$$
\lim_{N\to\infty}||\mathcal H_g(f_{N,\alpha})||_{\Dpa}=0.
$$
Next, scrutinizing the proof  of Theorem~\ref{th:dirichlet}
(necessity part), we see that the quantity $||\mathcal
H_g(f_{N,\alpha})||_{\Dpa}$ is incorporated in the constant $C_p$
which appears in the final lines of the argument of the proof. In
particular,
$$
||\Delta_ng''||_{H^p}\leq C_p' \left(||\mathcal
H_g(f_{2^n,\alpha})||_{\Dpa}\right)  2^{n(2-\frac{1}{p})}
$$
therefore,
$$
\lim_{n\to\infty}\frac{||\Delta_n g''||_{H^p}}{2^{{n(2-\frac{1}{p})}}}=0
$$
so by Remark~\ref{re:mlip}, $g\in
\lambda\left(p,\frac{1}{p}\right)$.
\par Conversely, let  $\ep>0$ and    $g\in \lambda \left(p,\frac{1}{p}\right)$.
Suppose $\{f_k\}$ is a sequence of analytic functions in $\D$ satisfying (\ref{eq:com1})
and (\ref{eq:com2}). Then  there exists
$n_0\in\N$ such that
$$\frac{||\Delta_n g''||_{H^p}}{2^{n(2-\frac{1}{p})}}<\ep\quad\text{for all $n\ge n_0$}.
$$
Then it follows from the proof of Theorem~\ref{th:dirichlet}
(sufficiency part) that for all $k$
\begin{equation*}
\begin{split}\label{eq:csd1}
\n{\hg(f_k)}^p_{\Dpa}&\lesssim |\hg(f_k)(0)|^p+ \sum_{n=0}^\infty
2^{-n(\alpha+1)}\left(\int_0^1t^{2^{n-2}+1}|f_k(t)|\,dt\right)^p\left\|\Delta_n
g''\right\|^p_{H^p}.
\end{split}
\end{equation*}
Using Lemma \ref{le:c1} we see that
$$
|\hg(f_k)(0)|=\int_0^1 |f_k(t)|\,dt\to 0 \quad \mbox{as $k\to \infty$}.
$$
On the other hand
\begin{equation*}
\begin{split}%\label{eq:csd1}
\sum_{n=0}^\infty 2^{-n(\alpha+1)}&\left(\int_0^1t^{2^{n-2}+1}|f_k(t)|\,dt\right)^p \left\|\Delta_n g''\right\|^p_{H^p}\\
 & \le  C\sum_{n=0}^{n_0-1} 2^{n(2p-2-\alpha)}\left(\int_0^1t^{2^{n-2}+1}|f_k(t)|\,dt\right)^p\\
  &\quad\quad +
C\ep \sum_{n_0}^\infty
2^{n(2p-2-\alpha)}\left(\int_0^1t^{2^{n-2}+1}|f_k(t)|\,dt\right)^p.
\end{split}
\end{equation*}
The finite sum above tend to $0$ as $k\to \infty$ by appealing to Lemma \ref{le:c1}. The second sum is
\begin{equation*}
\begin{split}
\sum_{n_0}^\infty &2^{n(2p-2-\alpha)}\left(\int_0^1t^{2^{n-2}+1}|f_k(t)|\,dt\right)^p\\
&\quad \quad \leq C\sum_{j=1}^{\infty}(j+1)^{(2p-3-\a)}\left(\int_0^1t^{j+1}|f_k(t)|\,dt\right)^p\\
  & \quad \quad\le C ||\hti(f_k)||^p_{\Dpa}\\
  & \quad \quad\le C \sup_{k}||f_k||_{\Dpa}\\
  &\quad \quad\leq CK
\end{split}
\end{equation*}
by (\ref{eq:com1}). This gives
$$
\lim_{k\to\infty}||\mathcal H_g(f_k)||_{\Dpa}\le CK\ep,
$$
and since $\ep$ is arbitrary the proof is complete.
\end{Pf}
\par\medskip Theorem \ref{th:hardyc} and Theorem \ref{th:bergmanc} can be proved
 with the same technique. We omit the details.
\par
 Finally, we shall prove Theorem~\ref{th:sachatten}.\par\medskip
\begin{Pf}{\em{ Theorem~\ref{th:sachatten}.}}
We recall that an operator $T$ on a separable Hilbert space $H$ is a
Hilbert-Schmidt operator if for an orthonormal basis
$\{e_n:n=0,1,2,\cdots\}$ of $H$  the sum
$\sum_{n=0}^{\infty}\n{T(e_n)}^2$ is finite. The finiteness of this
sum does not depend on the basis chosen. The class of
Hilbert-Schmidt operators on $H$ is denoted by $S^2(H)$.
\par
(i) The set  $\{1, z, z^2, \cdots, \}$ is a basis of $H^2$. If
$g(z)=\sum_0^{\infty} b_kz^k\in \hol(\D)$ then
$$
\hg(z^n)= \int_0^1t^ng'(tz)\,dt=
\sum_{k=0}^{\infty}\frac{(k+1)b_{k+1}}{n+k+1}z^k,
$$
thus
$$
\n{\hg(z^n)}_{H^2}^2 =\sum_{k=0}^{\infty}
\frac{(k+1)^2|b_{k+1}|^2}{(n+k+1)^2}
$$
and
\begin{align*}
\sum_{n=0}^{\infty}\n{\hg(z^n)}_{H^2}^2& =\sum_{n=0}^{\infty}\sum_{k=0}^{\infty}
\frac{(k+1)^2|b_{k+1}|^2}{(n+k+1)^2}\\
&=\sum_{k=0}^{\infty}(k+1)^2|b_{k+1}|^2
\sum_{n=0}^{\infty}
\frac{1}{(n+k+1)^2}\\
&\sim \sum_{k=0}^{\infty}(k+1)^2|b_{k+1}|^2\frac{1}{k+1}\\
&=\sum_{k=0}^{\infty}(k+1)|b_{k+1}|^2
\sim ||g||_{\mathcal{D}}^2.
\end{align*}
Thus $\mathcal{H}_g\in S^2(H^2)$ if and only if $g\in \mathcal{D}$.
\par (ii) On $A^2_\a$, $-1<\a<0$,  an orthonormal basis is
$$
\{e_n(z)=c_nz^n: n=0,1,2,\cdots,\}
$$
where
$$
 c_n=\frac{1}{\n{z^n}_{A^2_\a}}=\sqrt{\frac{\Gamma(n+2+\a)}{n!\Gamma(2+\a)}}
$$
Now
$$
\hg(e_n)(z)=c_n\hg(z^n)=c_n\sum_{k=0}^{\infty}\frac{(k+1)b_{k+1}}{n+k+1}z^k
$$
and
$$
\n{\hg(e_n)}_{A^2_\a}^2=c_n^2\sum_{k=0}^{\infty}\frac{k!\Gamma(2+\a)}{\Gamma(k+2+\a)}\frac{(k+1)^2|b_{k+1}|^2}{(n+k+1)^2}.
$$
Thus using the Stirling formula estimate $\frac{\Gamma(n+\b)}{n!}\sim (n+1)^{\b-1}$ we have
\begin{align*}
\sum_{n=0}^{\infty}\n{\hg(e_n)}_{A^2_\a}^2&
=\sum_{n=0}^{\infty}\sum_{k=0}^{\infty}
\frac{\Gamma(n+2+\a)}{n!\Gamma(2+\a)}\frac{k!\Gamma(2+\a)}{\Gamma(k+2+\a)}\frac{(k+1)^2|b_{k+1}|^2}{(n+k+1)^2}\\
&\sim \sum_{n=0}^{\infty}\sum_{k=0}^{\infty}\frac{(n+1)^{\a+1}}{(k+1)^{\a+1}}\frac{(k+1)^2|b_{k+1}|^2}{(n+k+1)^2}\\
&=\sum_{k=0}^{\infty}(k+1)^{1-\a}|b_{k+1}|^2\sum_{n=0}^{\infty}\frac{(n+1)^{\a+1}}{(n+k+1)^2}.
\end{align*}
Now a calculation shows that the asymptotic order of the inside series is
$$
\sum_{n=0}^{\infty}\frac{(n+1)^{\a+1}}{(n+k+1)^2}\sim (k+1)^\a ,
$$
and it follows that
$$
\sum_{n=0}^{\infty}\n{\hg(e_n)}_{A^2_\a}^2\sim
\sum_{k=0}^{\infty}(k+1)|b_{k+1}|^2 \sim ||g||_{\mathcal{D}}^2.
$$
\par
(iii) On $\mathcal D^2_\a$, $0<\a\leq 1$,  an orthonormal basis is
$$\{e_n\} =
\{1, d_1z, d_2z^2, \cdots,\}
$$
where
$$
 d_n=\frac{1}{\n{z^n}_{D^2_\a}}=\frac{1}{n}\sqrt{\frac{\Gamma(n-1+2+\a)}{(n-1)!\Gamma(2+\a)}}.
$$
In this case we find (omitting the details)
\begin{align*}
\sum_{n=0}^{\infty}\n{\hg(e_n)}_{D^2_\a}^2&\sim
\sum_{k=0}^{\infty}(k+1)^{(3-\a)}|b_{k+2}|^2\sum_{n=0}^{\infty}
\frac{(n+1)^{\a-1}}{(n+k+1)^2}\\
&\sim\sum_{k=0}^{\infty}(k+1)^{(3-\a)}|b_{k+2}|^2 (k+1)^{(\a-2)}\\
&\sim \sum_{k=0}^{\infty}(k+1)|b_{k+1}|^2
\sim ||g||_{\mathcal{D}}^2
\end{align*}
and the assertion follows.
\end{Pf}
\par\medskip
%%%%%%%%%%%%%%%%%%%%%%%%%%%%%%%%%%%%%%%%%%%%%%%%%%%%%%%%%%%%%%%%%%%%%%%%%%%%%%%%%%%%%%%%%%%%%%%%%%%%%%%%%%%%%%%%%%%%%%%%%%%%%%%%%%%%%%%%%%%%%%%%%%%%%%%%%%%%%%%%5

\end{document}